\newtheorem{theorem}{Theorem}[section]
\newtheorem{proposition}[theorem]{Proposition}
\newtheorem{lemma}[theorem]{Lemma}
\newtheorem{corollary}[theorem]{Corollary}
\newtheorem{conj}[theorem]{Conjecture}
\newtheorem{claim}[theorem]{Claim}
\theoremstyle{definition}
\newtheorem{remark}[theorem]{Remark}
\begin{document}
\title{\textbf{The Lemmens-Seidel conjecture and forbidden subgraphs}}
\author[a]{Meng-Yue Cao}
\author[b,c,]{Jack H. Koolen\footnote{Corresponding author.}}
\author[d]{Yen-Chi Roger Lin}
\author[e]{Wei-Hsuan Yu}
\affil[a]{\footnotesize{School of Mathematical Sciences, Beijing Normal University, 19 Xinjiekouwai Street, Beijing, 100875, PR China.}}
\affil[b]{\footnotesize{School of Mathematical Sciences, University of Science and Technology of China, 96 Jinzhai Road, Hefei, 230026, Anhui, PR China.}}
\affil[c]{\footnotesize{Wen-Tsun Wu Key Laboratory of CAS, 96 Jinzhai Road, Hefei, 230026, Anhui, PR China}}
\affil[d]{\footnotesize{National Taiwan Normal University, Taipei, 11677, Taiwan}}
\affil[e]{\footnotesize{National Central University, Taoyuan, 32001, Taiwan }}
\date{}
\maketitle
\newcommand\blfootnote[1]{%
\begingroup
\renewcommand\thefootnote{}\footnote{#1}%
\addtocounter{footnote}{-1}%
\endgroup}
\blfootnote{2010 Mathematics Subject Classification. Primary 05C50, 52C10, secondary 05C22.}
\blfootnote{E-mail addresses: {\tt cmy1325@163.com} (M.-Y. Cao), {\tt koolen@ustc.edu.cn} (J.H. Koolen),  {\tt yclinpa@gmail.com} (Y.-C. R. Lin), {\tt u690604@gmail.com} (W.-H. Yu).}

Dedicated to the $100$th birthday anniversary of Professor J. J. Seidel.

\begin{abstract}
  In this paper we show that the conjecture of Lemmens and Seidel of 1973 for systems of equiangular lines with common angle $\arccos(1/5)$ is true.
  Our main tool is forbidden subgraphs for smallest Seidel eigenvalue $-5$.
\end{abstract}

\section{Introduction}

A system of lines through the origin in the $r$-dimensional Euclidean space $\mathbb{R}^r$ is called equiangular if the angle between any pair of lines is the same. The study of equiangular lines has a long history and is related to  many things. For instance, the maximum size of equiangular lines is related to energy minimizing configurations~\cite{cohn2007universally}, line packing problems~\cite{fickus2016equiangular}, and tight spherical designs~\cite{bannai2009survey}. Several constructions of equiangular lines come from strongly regular graphs \cite{cameron2004strongly} and combinatorial designs \cite{taylor1971}. De Caen used association schemes to construct $\frac 2 9 (r+1)^2$ equiangular lines in $\mathbb{R}^r$ when $r=3 \cdot 2^{2t-1}-1$ for any positive integer $t$~\cite{de2000large}.We are interested in determining the maximum cardinality $N(r)$ of a system of equiangular lines in $\mathbb{R}^r$. Gerzon \cite{lemmens1973} proved that $N(r) \leqslant \frac{r(r+1)}{2}$ for all $r$.  However, so far the Gerzon bound is only known to be achieved for $r = 2, 3, 7$, and $23$.
If we have equiangular lines attaining the Gerzon bound, then we immediately have tight spherical 5-designs \cite{bannai2009survey}. The classification of tight spherical 5-designs has been open for decades and the main known necessary condition for the existence of tight
spherical 5-designs is $r=2,3$, or $r=(2k+1)^2-2$, where $k \in \mathbb{N}$.
The history of the study of equiangular lines can be traced back to Haantjes~\cite{haantjes1948}, who determined $N(3)$ and $N(4)$ in $1948$. After more than 70 years of study, the numbers $N(r)$ are now only known for $r\leqslant43$ except for $r = 17, 18, 19, 20$, and $42$.
This follows from the works of Van Lint and Seidel \cite{vanlint1966}, Lemmens and Seidel \cite{lemmens1973}, Barg and Yu \cite{barg2014}, and Greaves et al.~\cite{greaves2020equiangular}
We summarize the results in the following table.
For more references on recent progress of equiangular lines, readers may check \cite{balla2018equiangular, greaves2018equiangular, greaves2016, greaves2019equiangular, jiang2017forbidden, jiang2019fixedangle,   lin2018saturated}.

\begin{table}[ht]
	\centering
    \caption{Maximum cardinalities of equiangular lines for low dimensions}
    \label{tb:smallnd}
    \begin{tabular}{c|ccccccccc}
   		$r$    & 2 & 3--4 & 5  & 6  & 7--14 & 15 & 16     & 17 \\ \hline
        $M(r)$ & 3 & 6    & 10 & 16 & 28    & 36 & 40 & 48--49 \\
        \hline\hline
        $r$    & 18     & 19     & 20     & 21  & 22  & 23--41 & 42       & 43 \\ \hline
        $M(r)$ & 56--60 & 72--74 & 90--94 & 126 & 176 & 276    & 276--288 & 344
    \end{tabular}
\end{table}

Let $N_{\alpha}(r)$ be the maximum number of a system of equiangular lines in $\mathbb{R}^r$ with common angle $\arccos\alpha$. Neumann $(1973)$ showed that if $N_{\alpha}(r)>2r$, then $\frac{1}{\alpha}$ is an odd integer at least $3$.
Lemmens and Seidel \cite{lemmens1973} determined $N_{\frac{1}{3}}(r)$ for all $r\geqslant2$.
In particular, they showed that $N_{\frac{1}{3}}(r)=2r-2$ if $r\geqslant15$. They also proposed the following conjecture for the case $\frac{1}{\alpha}=5$.

\begin{conj}\label{conj:LS}
The maximum cardinality of a system of equiangular lines with angle $\arccos\frac{1}{5}$ in $\mathbb{R}^r$ is $276$ for $23 \leqslant r \leqslant 185$,
and $\lfloor \frac{3r-3}{2}\rfloor$ for $r \geqslant 185$.
\end{conj}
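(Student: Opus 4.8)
The plan is to translate the problem into the language of Seidel matrices, realize the two claimed quantities as explicit lower bounds, and then put essentially all the work into a matching upper bound coming from a forbidden-subgraph analysis for smallest Seidel eigenvalue $-5$. Concretely, choosing a unit vector on each of $n$ equiangular lines at angle $\arccos\tfrac15$ in $\mathbb{R}^r$ encodes the system as an $n\times n$ Seidel matrix $S$ (symmetric, zero diagonal, off-diagonal entries $\pm1$), and the Gram matrix $I+\tfrac15 S$ is then positive semidefinite of rank at most $r$; equivalently, $\lambda_{\min}(S)\geqslant-5$ and the multiplicity $m_{-5}(S)$ of $-5$ as an eigenvalue of $S$ satisfies $m_{-5}(S)\geqslant n-r$. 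Since $S$ is determined by the system only up to Seidel switching, I may take $S$ to be the Seidel matrix of a conveniently chosen graph $G$ on $n$ vertices and assume the line system is saturated in $\mathbb{R}^r$; the goal then reads: $n\leqslant 276$ when $23\leqslant r\leqslant 185$ and $n\leqslant\lfloor(3r-3)/2\rfloor$ when $r\geqslant 185$, with both bounds attained.

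For the lower bounds, the regular two-graph on $276$ vertices, whose Seidel matrix has spectrum $\{55^{(23)},(-5)^{(253)}\}$, furnishes $276$ equiangular lines at angle $\arccos\tfrac15$ spanning $\mathbb{R}^{23}$, hence lying in $\mathbb{R}^r$ for every $r\geqslant 23$, so $N_{1/5}(r)\geqslant276$ throughout. For the other value, a pillar-type construction in the spirit of Lemmens--Seidel --- stacking copies of a fixed bounded configuration over a common sub-configuration --- produces $\lfloor(3r-3)/2\rfloor$ such lines in $\mathbb{R}^r$; since $\lfloor(3r-3)/2\rfloor\geqslant276$ precisely when $r\geqslant185$, the two lower bounds agree at the conjectured threshold.

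The upper bound is the heart of the matter and runs in four steps. \textbf{(1)} Compile a finite family $\mathcal{F}$ of graphs such that every graph with smallest Seidel eigenvalue strictly below $-5$ contains some member of $\mathcal{F}$ as an induced subgraph; $\mathcal{F}$ consists of the minimal ``forbidden'' graphs and is located by a bounded search combined with Cauchy interlacing. \textbf{(2)} Prove a structure theorem: any graph arising from a saturated line system (and hence $\mathcal{F}$-free) is either \emph{exceptional} --- one of finitely many graphs, each switching-equivalent to an induced subgraph of the $276$-vertex regular two-graph --- or \emph{pillared}: after fixing a bounded base configuration, the remaining lines partition into pillars over the base, each of bounded size and with severely restricted pairwise interactions. \textbf{(3)} For a pillared system in $\mathbb{R}^r$, a refined linear-algebra count --- each pillar consumes a bounded number of coordinate directions and contributes at most $3$ lines for every $2$ directions consumed, fed into the constraint $m_{-5}(G)\geqslant|V(G)|-r$ --- forces $n\leqslant\lfloor(3r-3)/2\rfloor$ in every dimension $r$. \textbf{(4)} For an exceptional system, interlacing against the explicit Seidel spectrum of the $276$-vertex two-graph gives $n\leqslant276$. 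Assembling: when $23\leqslant r\leqslant185$ both classes give $n\leqslant\max\{276,\lfloor(3r-3)/2\rfloor\}=276$, and when $r\geqslant185$ the pillared class attains $\lfloor(3r-3)/2\rfloor\geqslant276$ while the exceptional class cannot exceed it --- matching the lower bounds and proving the conjecture in both ranges.

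The main obstacle is Step (2) (with Step (1) as its prerequisite): enumerating the forbidden subgraphs is routine, but proving that $\mathcal{F}$-freeness together with saturation forces the exceptional/pillared dichotomy --- in particular, controlling the ways pillars can attach to the base and excluding any unbounded ``exceptional'' behaviour --- is the analogue of, and substantially more delicate than, the Cameron--Goethals--Seidel--Shult classification of graphs with smallest adjacency eigenvalue at least $-2$. I expect this to require a long, partly computer-assisted case analysis organized around the family $\mathcal{F}$; once the structure theorem is in hand, the dimension counts of Steps (3)--(4) and the comparison with $276$ are comparatively short.
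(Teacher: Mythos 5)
Your overall framing---encode the lines as a Seidel matrix $S$ with $\lambda_{\min}(S)\geqslant-5$ and ${\rm rk}(S+5\mathbf{I})\leqslant r$, take the lower bounds as known (the $276$-point regular two-graph and the Lemmens--Seidel constructions), and put all of the work into an upper bound driven by forbidden subgraphs for smallest Seidel eigenvalue $-5$---matches the paper. But the two steps you yourself identify as the heart of the matter are asserted rather than proved, and neither is how the paper actually proceeds. First, your Step (1) is not ``routine'': the paper exhibits a minimal forbidden graph in $\mathcal{F}_{-5}$ on $2487$ vertices (namely $\tilde{E}^+_8(2477)$ in Lemma~\ref{forbidden}) and explicitly remarks that a complete enumeration of $\mathcal{F}_{-5}$ may be out of reach; only finiteness is known (Jiang--Polyanskii). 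Second, and more seriously, your Step (2) structure theorem---every $\mathcal{F}$-free saturated system is either ``pillared'' or ``exceptional'', with every exceptional system switching-equivalent to an induced sub-configuration of the $276$-vertex regular two-graph---is a classification-type statement that is not known, is considerably stronger than the conjecture itself, and is left as a black box; your Step (4) then rests entirely on it. As written, the proposal reduces the conjecture to an unproven (and arguably harder) theorem.

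The paper circumvents both obstacles. It proves the single quantitative statement (Theorem~\ref{maintheorem}) that $n\geqslant277$ and $\lambda_{\min}(S)=-5$ force ${\rm rk}(S+5\mathbf{I})\geqslant\lfloor 2n/3\rfloor+1$, using only a short explicit list of members of $\mathcal{F}_{-5}$ (Lemmas~\ref{forbidden}, \ref{36}, \ref{forbidden2}, \ref{pillarforbidden1}, \ref{pillarforbidden2}) and a case analysis on $\alpha([S])$ and $\omega([S])$: when $\alpha([S])\geqslant49$ or $[S]$ contains a triangle-free graph, a rank argument on the part of the graph far from a small subgraph of spectral radius greater than $2$ suffices (Theorems~\ref{LS49} and \ref{LStrianglefree}); the cases $\omega([S])\in\{2,3,5,6\}$ are known from earlier work; and for $\omega([S])=4$ the independence number is first driven down to $28$, after which sharp bounds on the $(4,1)$- and $(4,2)$-pillars obtained from positive semidefiniteness of explicit Gram and quotient matrices (Theorem~\ref{(4,1)pillar}, Corollary~\ref{(4,2)pillar}), combined with the gallery decomposition (Theorem~\ref{105}), give $n\leqslant276$ directly---no classification of the extremal configurations is ever needed. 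To repair your outline you would need to replace the structure theorem and the embedding claim of Step (4) by counting arguments of this kind.
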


Neumaier \cite{neumaier1989graph} showed Conjecture \ref{conj:LS} for sufficient large $r$. He also claimed (without proof) that his method would work for $r\geqslant N_0$ where $ 2486\leqslant N_0 \leqslant 45374$. In this paper, we completely solve Conjecture \ref{conj:LS}. Balla, Dr\"{a}xler, Keevash and Sudakov\cite{balla2018equiangular} and Bukh \cite{bukh2016} conjectured an asymptotic version of Conjecture \ref{conj:LS} for other angles as follows:

\begin{conj}\label{conj:BDKS}
The maximum cardinality of a system of equiangular lines with angle $\arccos\alpha$, where $\frac{1}{\alpha}=2m+1$ is an odd integer at least $3$, is equal to $\frac{(m+1)(r+1)}{m}+O(1)$, for $r\rightarrow\infty$.
\end{conj}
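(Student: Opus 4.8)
Specialise Conjecture~\ref{conj:BDKS} to $\frac{1}{\alpha}=2m+1$ and set $\lambda:=\frac{1-\alpha}{2\alpha}=m$ (for $m=2$ this is the large-$r$ part of Conjecture~\ref{conj:LS}). The plan is to pass to spectral graph theory. Choosing a unit vector on each line and letting $G$ be the graph on the $n$ lines with $i\sim j$ exactly when the corresponding inner product equals $-\alpha$, the system in $\mathbb R^{r}$ is the same thing as a graph $G$ for which
\[
 M \;:=\; \tfrac{1-\alpha}{\alpha}\,I + J - 2A(G) \;=\; 2m\,I + J - 2A(G)
\]
is positive semidefinite of rank at most $r$; equivalently $n-r\le\operatorname{mult}_{0}(M)$. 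Writing $M=-2\big(A(G)-mI\big)+J$ and pairing with vectors orthogonal to $\mathbf 1$ (which $J$ annihilates) shows that $A(G)-mI$ is negative semidefinite on $\mathbf 1^{\perp}$; by Cauchy interlacing this forces $A(G)-mI$ to have at most one positive eigenvalue, that is, \emph{at most one connected component $C_{0}$ of $G$ has spectral radius exceeding $m$}, and that component has second largest eigenvalue at most $m$. Every other component has spectral radius at most $m$, and since a connected graph of spectral radius exactly $m$ has at least $m+1$ vertices, with $K_{m+1}$ the unique minimiser, the number $t$ of components of spectral radius exactly $m$ satisfies $t\le n/(m+1)$.

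The next step is to bound $\operatorname{mult}_{0}(M)$. A short computation with the block structure of $A(G)$ gives $\operatorname{mult}_{0}(M)\le \mu_{0}+t+1$, where $\mu_{0}$ is the multiplicity of $m$ as an eigenvalue of $C_{0}$ (taken to be $0$ if there is no such component): components of spectral radius below $m$ contribute nothing, each of the $t$ components of spectral radius $m$ contributes a single dimension, and the remaining $+1$ accounts for the direction $\mathbf 1$. Since $m$ is at worst the second largest eigenvalue of $C_{0}$, the crucial input is a bound on the multiplicity of the second largest eigenvalue of a connected graph. After a preliminary reduction showing that a near-extremal configuration may be assumed to have $C_{0}$ of maximum degree bounded by a constant $\Delta=\Delta(m)$ — a vertex of large degree forces, by a Ramsey argument inside its neighbourhood, either a clique or an \emph{independent} neighbourhood, the latter being dimension-inefficient and removable at sublinear cost — one invokes the estimate that a connected graph on $N$ vertices of maximum degree $\Delta$ has second-eigenvalue multiplicity $O_{\Delta}(N/\log\log N)$. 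Consequently $\mu_{0}=o(n)$ (trivially if $C_{0}$ is small, by the estimate if $C_{0}$ is large), whence
\[
 n-r \;\le\; \frac{n}{m+1}+o(n),
 \qquad\text{equivalently}\qquad
 N_{\alpha}(r) \;\le\; \frac{m+1}{m}\,r + o(r).
\]
For the matching lower bound, take in $\mathbb R^{r}$ the graph that is a disjoint union of $\big\lfloor\frac{r-1}{m}\big\rfloor$ copies of $K_{m+1}$ (with a bounded correction when $m\nmid r-1$); a direct computation shows the associated matrix $M$ is positive semidefinite of rank exactly $r$, so it realises $\big\lfloor\frac{(m+1)(r-1)}{m}\big\rfloor=\frac{(m+1)(r+1)}{m}+O(1)$ equiangular lines at angle $\arccos\alpha$.

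Finally, to upgrade the upper bound from $\frac{m+1}{m}r+o(r)$ to the sharp $\frac{(m+1)(r+1)}{m}+O(1)$ demanded by the conjecture, one needs a stability analysis: in a configuration with $n$ within $o(r)$ of the extremum, all but $O(1)$ of the components must already be copies of $K_{m+1}$, the exceptional component $C_{0}$ must have $O(1)$ vertices (it is dimension-inefficient unless small) and hence $\mu_{0}=O(1)$, and the loss coming from the rank-one term $J$ is genuinely $O(1)$; reinserting this into the count replaces $o(r)$ by $O(1)$. I expect the second-eigenvalue multiplicity estimate to be the main obstacle — it has no classical precedent, and it is exactly what separates a fixed angle (where the method works) from a merely bounded value of $1/\alpha$ — with the stability step the next hardest, the interlacing observation of the first paragraph and the minimality of $K_{m+1}$ being comparatively routine. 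For $\frac{1}{\alpha}=5$ the whole programme can be carried through with explicit constants, and the resulting bound $\frac{3(r+1)}{2}+O(1)$ is exactly the large-$r$ regime of Conjecture~\ref{conj:LS}; the forbidden-subgraph method of the present paper both recovers this and determines the threshold $r=185$ and the plateau value $276$ precisely.
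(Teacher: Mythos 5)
The statement you are proving is Conjecture~\ref{conj:BDKS}, which this paper does not prove: it is stated as a conjecture of Balla--Dr\"axler--Keevash--Sudakov and Bukh, and the authors merely record that it was resolved in the cited work of Jiang, Tidor, Yao, Zhang and Zhao. So there is no in-paper proof to compare against; what you have written is an outline of that external resolution. The reduction itself is sound: the Gram matrix $M=2mI+J-2A(G)$, the observation that positive semidefiniteness on $\mathbf 1^{\perp}$ forces $\lambda_2(A)\leqslant m$ and hence at most one component of spectral radius exceeding $m$, the count $\operatorname{mult}_0(M)\leqslant\mu_0+t+1$ with $t\leqslant n/(m+1)$, and the lower-bound construction from disjoint copies of $K_{m+1}$ (rank $tm+1$, so $n=(m+1)\lfloor(r-1)/m\rfloor$) are all correct and standard.

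The genuine gaps are the two steps you yourself flag. First, the entire upper bound rests on the assertion that a connected bounded-degree graph on $N$ vertices has second-eigenvalue multiplicity $o(N)$ (the $O_{\Delta}(N/\log\log N)$ bound). This is the main theorem of the Jiang--Tidor--Yao--Zhang--Zhao paper, proved there by a delicate argument counting closed walks from a net of vertices and comparing the graph to the one obtained by deleting the net; nothing in your proposal supplies it, and without it you do not even get $n\leqslant\frac{m+1}{m}r+o(r)$. (You also need $n=O(r)$ for the $o(n)$ to become $o(r)$; that is a separate input, due to Bukh.) Second, the upgrade from $\frac{m+1}{m}r+o(r)$ to the exact $\frac{(m+1)(r+1)}{m}+O(1)$ is only gestured at: the ``stability analysis'' you describe --- all but $O(1)$ components are copies of $K_{m+1}$, the exceptional component is bounded --- is precisely where the known proof does substantial additional work, and as stated it is a wish rather than an argument. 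In short, the skeleton is the right one, but the two load-bearing lemmas are asserted, not proved, so this is a programme rather than a proof.
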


Jiang and Polyanskii \cite{jiang2017forbidden} gave partial results for  Conjecture~\ref{conj:BDKS}, and it was completely solved by Jiang, Tidor, Yao, Zhang, and Zhao in a recent paper \cite{jiang2019fixedangle}.

\section{Outline of the paper}
All graphs in this paper are simple and undirected. For undefined terminologies, we refer to \cite{godsil2013,brouwer2011spectra}.

First, we transform the problem of determining $N_{\alpha}(r)$ into a linear algebra problem. To do so, we introduce Seidel matrices.

A Seidel matrix $S$ of order $n$ is a symmetric $(0, \pm1)$-matrix with $0$ on the diagonal and $\pm1$ otherwise. Seidel matrices and systems of equiangular lines, are related as follows (see for example, \cite[Section 11.1]{godsil2013}):

\begin{proposition}\label{eq}
Let $n>r\geqslant2$ be integers. There exists a system of $n$ equiangular lines in $\mathbb{R}^r$ with common angle $\arccos \alpha$ if and only if there exists a Seidel matrix $S$ of order $n$ such that $S$ has smallest eigenvalue at least $-\frac{1}{\alpha}$ and \rm{rk}$(S+\frac{1}{\alpha}\mathbf{I})\leqslant r$.
\end{proposition}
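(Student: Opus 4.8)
The plan is to pass back and forth between a system of equiangular lines and the Gram matrix of a choice of unit vectors, one on each line, and to identify that Gram matrix as a positive semidefinite rescaling-and-shift of a Seidel matrix. Throughout, $0<\alpha<1$, as is implicit in speaking of a common angle $\arccos\alpha$ between distinct lines.

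For the forward implication, suppose $\ell_1,\dots,\ell_n$ are equiangular lines in $\mathbb{R}^r$ with common angle $\arccos\alpha$. First I would pick a unit vector $v_i$ spanning $\ell_i$ for each $i$; the sign of each $v_i$ is arbitrary and gets absorbed below. Equiangularity gives $\langle v_i,v_i\rangle=1$ and $\langle v_i,v_j\rangle=\varepsilon_{ij}\alpha$ for $i\ne j$, with $\varepsilon_{ij}=\varepsilon_{ji}\in\{\pm1\}$. Let $S$ be the symmetric matrix with $S_{ii}=0$ and $S_{ij}=\varepsilon_{ij}$ for $i\ne j$; then $S$ is a Seidel matrix and the Gram matrix of $v_1,\dots,v_n$ is $G=\mathbf{I}+\alpha S=\alpha\bigl(S+\tfrac1\alpha\mathbf{I}\bigr)$. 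A Gram matrix is positive semidefinite, so each eigenvalue $1+\alpha\lambda$ of $G$, with $\lambda$ ranging over the eigenvalues of $S$, is nonnegative; since $\alpha>0$ this says precisely that the smallest eigenvalue of $S$ is at least $-\tfrac1\alpha$. Finally the $v_i$ lie in $\mathbb{R}^r$, so $\operatorname{rk}\bigl(S+\tfrac1\alpha\mathbf{I}\bigr)=\operatorname{rk} G\le r$.

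For the converse, suppose $S$ is a Seidel matrix of order $n$ whose smallest eigenvalue is at least $-\tfrac1\alpha$ and with $\operatorname{rk}\bigl(S+\tfrac1\alpha\mathbf{I}\bigr)\le r$. Put $G=\alpha\bigl(S+\tfrac1\alpha\mathbf{I}\bigr)=\mathbf{I}+\alpha S$. The eigenvalue hypothesis gives $1+\alpha\lambda\ge0$ for every eigenvalue $\lambda$ of $S$, so $G$ is positive semidefinite, and $\operatorname{rk} G=\operatorname{rk}\bigl(S+\tfrac1\alpha\mathbf{I}\bigr)\le r$. Hence $G$ is the Gram matrix of vectors $v_1,\dots,v_n$ in a real inner product space of dimension at most $r$, which I would isometrically embed into $\mathbb{R}^r$. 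From $G_{ii}=1$ the $v_i$ are unit vectors, and from $G_{ij}=\pm\alpha$ the lines $\ell_i:=\mathbb{R}v_i$ are pairwise at angle $\arccos\alpha$; since $|\langle v_i,v_j\rangle|=\alpha<1$ for $i\ne j$, these $n$ lines are distinct. This produces the required system of $n$ equiangular lines in $\mathbb{R}^r$.

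The whole argument is the standard dictionary between positive semidefinite matrices of rank at most $r$ and Gram matrices of vectors in $\mathbb{R}^r$, together with the elementary equivalence $1+\alpha\lambda\ge0\iff\lambda\ge-\tfrac1\alpha$ valid for $\alpha>0$; there is no genuine obstacle. The only steps deserving a word of care are the realizability step in the converse -- turning an abstract positive semidefinite matrix of rank at most $r$ into actual vectors sitting inside $\mathbb{R}^r$ -- and verifying that the resulting $n$ lines are pairwise distinct, which is exactly where the hypothesis $\alpha<1$ is used.
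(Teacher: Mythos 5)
Your proof is correct and is precisely the standard Gram-matrix dictionary (unit vectors on the lines give $G=\mathbf{I}+\alpha S$, positive semidefiniteness gives the eigenvalue bound, and rank controls the ambient dimension) that the paper itself does not reprove but cites from Godsil--Royle, Section 11.1. Nothing is missing; the care you take with realizing a rank-$\le r$ positive semidefinite matrix as vectors in $\mathbb{R}^r$ and with distinctness of the lines via $\alpha<1$ is exactly the right amount.
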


In this paper, we focus on the minimum rank of $S+\frac{1}{\alpha}\mathbf{I}$ for a fixed number $n$ rather than the maximum cardinality of a system of equiangular lines in $\mathbb{R}^r$ with common angle $\arccos\alpha$ for  fixed dimension $r$.
Our main result is as follows.

\begin{theorem}\label{maintheorem}
Let $S$ be a Seidel matrix of order $n$ with the smallest eigenvalue $-5$.
If $n\geqslant277$, then ${\rm rk}(S+5\mathbf{I})\geqslant\lfloor\frac{2n}{3}\rfloor+1$.
\end{theorem}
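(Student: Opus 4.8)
\emph{Overview and reduction.} The plan is to rephrase the rank inequality as an upper bound on an eigenvalue multiplicity, to observe via Cauchy interlacing that the hypothesis ``$\lambda_{\min}(S)=-5$'' is a forbidden-subgraph condition on the whole switching class of $S$, and then to feed a classification of graphs with smallest Seidel eigenvalue $-5$ into a direct multiplicity estimate. For the reduction: since $\lambda_{\min}(S)=-5$, the matrix $S+5\mathbf{I}$ is positive semidefinite, so ${\rm rk}(S+5\mathbf{I})=n-m$ where $m=m_S(-5)$ is the multiplicity of $-5$ as an eigenvalue of $S$; since $n-\lfloor 2n/3\rfloor=\lceil n/3\rceil$, the theorem is equivalent to $m_S(-5)\le\lceil n/3\rceil-1$ whenever $n\ge 277$. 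Two remarks frame the problem. First, the bound is sharp: if $\Gamma$ is the disjoint union of $n/3$ triangles (say $3\mid n$), then a Seidel matrix $S=J-\mathbf{I}-2A(\Gamma)$ of its switching class has $-5$ as an eigenvalue of multiplicity exactly $n/3-1$, as one sees by splitting $\mathbb{R}^n$ into the $2$- and $(-1)$-eigenspaces of $A(\Gamma)$ and tracking the rank-one contribution of $J$. Second, this is why the hypothesis must read $n\ge 277$: the exceptional system of $276$ equiangular lines in $\mathbb{R}^{23}$ has a Seidel matrix with ${\rm rk}(S+5\mathbf{I})=23$ and hence $m_S(-5)=253$, far beyond $\lceil 276/3\rceil-1=91$, so the conclusion genuinely fails at $n=276$.

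\emph{Forbidden subgraphs.} By Cauchy interlacing, every principal submatrix $S'$ of $S$ satisfies $\lambda_{\min}(S')\ge\lambda_{\min}(S)=-5$; equivalently, no graph in the switching class of $S$ contains, as an induced subgraph, any graph whose Seidel matrix has smallest eigenvalue strictly below $-5$ --- call such a graph \emph{forbidden}. For instance $K_7$ (Seidel eigenvalue $-6$) and $2K_4$ (Seidel eigenvalue $-7$) are forbidden, so the switching class is $K_7$-free and $2K_4$-free, and likewise for a longer list. The first task is to assemble, partly by hand and partly with a computer search, a finite family $\mathcal{F}$ of forbidden graphs rich enough that $\mathcal{F}$-freeness by itself forces strong local constraints on any graph representing the switching class of $S$.

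\emph{The structure theorem (the main obstacle).} The heart of the proof is to upgrade $\mathcal{F}$-freeness, together with the largeness of $n$, into a global description of the switching class of $S$: it must contain a graph $\Gamma$ that is, up to a bounded set of vertices, a disjoint union of cliques of order at most $3$ (essentially a disjoint union of triangles, edges and isolated vertices), apart from finitely many exceptional possibilities --- related to the exceptional $276$-line configuration and its subsystems --- that can occur only when $n\le 276$. This is the analogue, for smallest Seidel eigenvalue $-5$, of the Cameron--Goethals--Seidel--Shult description of graphs with smallest adjacency eigenvalue at least $-2$; promoting the finite list $\mathcal{F}$ to this classification is where essentially all of the work lies, and is the step I expect to be hardest. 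The exceptional possibilities are ruled out at once by $n\ge 277$.

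\emph{Conclusion.} Write $S+5\mathbf{I}=J-2(A(\Gamma)-2\mathbf{I})$. Since $J$ has rank $1$, $m_S(-5)=\dim\ker(A(\Gamma)-2\mathbf{I})+\delta$ for some $\delta\in\{-1,0,1\}$, and in fact $\delta=-1$ as soon as $\mathbf{1}$ is not orthogonal to $\ker(A(\Gamma)-2\mathbf{I})$. When $\Gamma$ is a disjoint union of cliques of order at most $3$, only its triangle components meet the $2$-eigenspace of $A(\Gamma)$, each contributing one dimension, so $\dim\ker(A(\Gamma)-2\mathbf{I})$ equals the number $a$ of triangle components and $3a\le n$; moreover $\mathbf{1}$ has nonzero inner product with every triangle-block indicator, so $\delta=-1$ and $m_S(-5)=a-1\le\lfloor n/3\rfloor-1\le\lceil n/3\rceil-1$, with equality precisely for the disjoint union of $n/3$ triangles. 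It remains to absorb the bounded perturbation permitted by the structure theorem: using the forbidden-subgraph restrictions on how the $O(1)$ exceptional vertices attach, one checks that they raise neither the multiplicity of the eigenvalue $2$ nor the quantity $\delta$ by enough to break the inequality once $n$ is large. I expect this last case analysis, alongside the structure theorem, to be the most laborious part of the argument.
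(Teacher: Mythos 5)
Your reduction to bounding the multiplicity of $-5$, the sharpness example of $n/3$ disjoint triangles, and the interlacing observation that $\lambda_{\min}(S)=-5$ is a hereditary (forbidden-subgraph) condition are all correct and consistent with the paper's starting point. But the proof has a genuine gap at its centre: the ``structure theorem'' you invoke --- that for $n\geqslant 277$ the switching class must contain a graph which is, up to $O(1)$ vertices, a disjoint union of cliques of order at most $3$ --- is not proved, not sketched, and is essentially equivalent in strength to the theorem you are trying to establish. No Cameron--Goethals--Seidel--Shult-type classification is known for smallest Seidel eigenvalue $-5$; indeed the paper points out that $\mathcal{F}_{-5}$, though finite by Jiang--Polyanskii, contains minimal forbidden graphs of order at least $2487$, which is precisely why a classification route is impractical. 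Saying ``promoting the finite list $\mathcal{F}$ to this classification is where essentially all of the work lies'' concedes that the entire content of the theorem has been deferred to an unproven claim, and your final step (absorbing the $O(1)$ perturbation) is likewise only asserted.

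The paper avoids any such classification. It never identifies the global structure of $\Gamma$; instead it lower-bounds ${\rm rk}(S+5\mathbf{I})$ directly by a rank argument (if $H$ is a small induced subgraph with $\rho(H)>2$, then the part of $G$ far from $H$ has spectral radius below $2$, so adding an independent set to it keeps $S+5\mathbf{I}$ of full rank on that part), and organizes the remaining cases by the independence number $\alpha([S])$ and clique number $\omega([S])$ of the switching class: the cases $\alpha([S])\geqslant 49$, then triangle-free representatives, then $\omega([S])\neq 4$, then $\alpha([S])\geqslant 29$ are each dispatched by explicit members of $\mathcal{F}_{-5}$ (such as $K_{1,r}(s,t)$, $K_{2,3}(19)$, $K_4(5)$, $B_1(14)$, $B_2(9)$); the surviving case $\omega([S])=4$, $\alpha([S])\leqslant 28$ is handled by Gram-matrix/quotient-matrix bounds on the sizes of $(4,1)$- and $(4,2)$-pillars and a counting argument over galleries, yielding $n\leqslant 276$ outright in that case. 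If you want to complete your proposal you would either have to prove the structure theorem (a substantial open-ended task) or replace it with a quantitative local-to-global counting scheme of the kind the paper uses.
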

This theorem implies that Conjecture \ref{conj:LS} is true.

Our main tools are minimal forbidden subgraphs. We will first show that Theorem~\ref{maintheorem} is true when the independence number $\alpha([S])$ of the switching class $[S]$ of a Seidel matrix $S$ (for definitions see next section) is at least $49$. This uses, in addition to minimal forbidden subgraphs, also a rank argument, which is done in Section \ref{sec:49}. Then, in Section \ref{sec:triangle-free}, we concentrate on the case when $[S]$ contains a triangle-free graph. If the clique number $\omega([S])$ of $[S]$ is at least $5$, then Conjecture \ref{conj:LS} was already shown by Lemmens-Seidel~\cite{lemmens1973} and Lin-Yu~\cite{lin2018equiangular}. So we only need to show Theorem \ref{maintheorem} for the cases when $\omega([S])$ is at most $4$. Under this condition, we show Theorem \ref{maintheorem} is true when $\alpha([S])\geqslant29$ in Section~\ref{sec:29}. Then, in Section \ref{sec:pillar}, we apply the pillar method to the $(4,1)$-pillars and the $(4,2)$-pillars. Our bounds for the $(4,1)$-pillar and on the $(4,2)$-pillar are not yet sharp enough to show Theorem~\ref{maintheorem}.
So in Section~\ref{sec:gallery} we introduce the gallery with respect to an edge which combines a $(4,2)$-pillar with $(4,1)$-pillars
and finish the proof of Theorem \ref{maintheorem}.

\section{Preliminaries}

\subsection{Matrices}
We denote the eigenvalues of a real symmetric matrix $M$ of order $n$
by $\eta_1(M)\geqslant\eta_2(M)\geqslant\cdots\geqslant\eta_n(M)$. The largest (resp.\ smallest) eigenvalue of $M$ is also denoted by $\rho(M)$ (resp.\ $\eta_{\min}(M)$). The largest eigenvalue of $M$ is also called the spectral radius of $M$. The rank of $M$ is denoted by ${\rm rk}(M)$.

For a real symmetric $n\times n$ matrix $B$ and a real symmetric $m\times m$ matrix $C$ with $n>m$, we say that the eigenvalues of $C$ interlace the eigenvalues of $B$, if $\eta_{n-m+i}(B)\leqslant\eta_i(C)\leqslant\eta_i(B)$ for each $i=1,\ldots,m$. The following result is a special case of interlacing.

\begin{theorem}{\rm (Cf.~\cite[Theorem 9.1.1]{godsil2013})}\label{interlacing}
Let $B$ be a real symmetric $n\times n$ matrix and $C$ be a principal submatrix of $B$ of order $m$, where $m<n$. Then the eigenvalues of $C$ interlace the eigenvalues of $B$.
\end{theorem}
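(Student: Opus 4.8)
The plan is to derive both interlacing inequalities in one stroke from the Courant--Fischer min--max characterization of the eigenvalues of a real symmetric matrix, rather than inducting on $n-m$. First I would reduce to the case where $C$ is the leading $m\times m$ block of $B$: a principal submatrix is determined by a subset of the indices, and conjugating $B$ by the corresponding permutation matrix changes neither its eigenvalues nor those of $C$, so we may assume $C=P^{\top}BP$ where $P=\bigl(\begin{smallmatrix}I_m\\0\end{smallmatrix}\bigr)\in\mathbb{R}^{n\times m}$. The only property of $P$ I need is that it is an isometric embedding: for every $y\in\mathbb{R}^m$ one has $\|Py\|=\|y\|$ and
\[
(Py)^{\top}B(Py)=y^{\top}P^{\top}BPy=y^{\top}Cy,
\]
so $P$ carries an $i$-dimensional subspace $V\subseteq\mathbb{R}^m$ to an $i$-dimensional subspace $PV\subseteq\mathbb{R}^n$ on which the Rayleigh quotient of $B$ agrees with that of $C$ on $V$.

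For the upper bound $\eta_i(C)\leqslant\eta_i(B)$ I would use the max--min form $\eta_i(C)=\max_{\dim V=i}\min_{0\neq y\in V}\frac{y^{\top}Cy}{y^{\top}y}$. Fix an $i$-dimensional $V\subseteq\mathbb{R}^m$. Since $PV$ is an $i$-dimensional subspace of $\mathbb{R}^n$,
\[
\min_{0\neq y\in V}\frac{y^{\top}Cy}{y^{\top}y}=\min_{0\neq x\in PV}\frac{x^{\top}Bx}{x^{\top}x}\leqslant\max_{\dim U=i}\min_{0\neq x\in U}\frac{x^{\top}Bx}{x^{\top}x}=\eta_i(B),
\]
and taking the maximum over all admissible $V$ gives $\eta_i(C)\leqslant\eta_i(B)$.

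For the lower bound $\eta_{n-m+i}(B)\leqslant\eta_i(C)$ I would instead use the dual min--max form. The key bookkeeping is that $\eta_i(C)$ is realized over subspaces of dimension $m-i+1$ in $\mathbb{R}^m$, while $\eta_{n-m+i}(B)=\min_{\dim U=m-i+1}\max_{0\neq x\in U}\frac{x^{\top}Bx}{x^{\top}x}$, because $n-(n-m+i)+1=m-i+1$. Fix an $(m-i+1)$-dimensional $W\subseteq\mathbb{R}^m$; then $PW$ has dimension $m-i+1$, so
\[
\eta_{n-m+i}(B)\leqslant\max_{0\neq x\in PW}\frac{x^{\top}Bx}{x^{\top}x}=\max_{0\neq y\in W}\frac{y^{\top}Cy}{y^{\top}y},
\]
and minimizing over all such $W$ yields $\eta_{n-m+i}(B)\leqslant\eta_i(C)$. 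Combining the two bounds is precisely the definition of interlacing stated before the theorem.

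I expect no serious obstacle, since this is a classical result; the delicate point is purely the index arithmetic---matching the eigenvalue index $n-m+i$ of $B$ with the subspace dimension $m-i+1$, and checking that $P$ preserves both dimension and Rayleigh quotient. The one genuine prerequisite is the Courant--Fischer theorem itself, which is not stated in the excerpt; if it may not be assumed, I would either establish it first or fall back on induction on $n-m$, reducing to the single-deletion case $m=n-1$ and proving $\eta_{i+1}(B)\leqslant\eta_i(C)\leqslant\eta_i(B)$ by the same subspace-dimension argument.
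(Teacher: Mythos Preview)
Your argument via the Courant--Fischer min--max principle is correct, and the index bookkeeping for the lower bound is handled properly. Note, however, that the paper does not give its own proof of this theorem: it is stated with a citation to \cite[Theorem 9.1.1]{godsil2013} and used as a black box, so there is nothing to compare against. What you have written is essentially the standard textbook proof (and indeed the one in the cited reference).
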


\subsection{Graphs}
A graph $G$ is an ordered pair $(V(G),E(G))$, where $V(G)$ is a finite set and $\displaystyle E(G)\subseteq \binom{V(G)}{2}$. The set $V(G)$ (resp.\ $E(G)$) is called the vertex set (resp.\ edge set) of $G$, and the cardinality of $V(G)$ (resp.\ $E(G)$) is called the order (resp.\ size) of $G$ and is denoted by $n_G$ (resp.\ $\varepsilon_G$). The adjacency matrix of $G$, denoted by $A(G)$, is a symmetric $(0,1)$-matrix indexed by $V(G)$, such that $(A(G))_{xy}=1$ if and only if $xy$ is an edge in $G$. The eigenvalues of $G$ are the eigenvalues of $A(G)$, and the spectral radius of $G$ is denoted by $\rho(G)$. The cardinality of a maximum independent set (resp.\ clique) in $G$ is called the independence number (resp.\ clique number) of $G$, denoted by $\alpha(G)$ (resp.\ $\omega(G)$).

The disjoint union of the graphs $G_1$ and $G_2$ is denoted by $G_1\dot\cup G_2$. For a graph $G$ and a subset $U \subseteq V(G)$, we denote by $G_U$ the subgraph of $G$ induced on $U$, i.e. $V(G_U)=U$ and $\displaystyle E(G_U)=E(G)\cap \binom{U}{2}$. For $H$ an induced subgraph of $G$, we denote by $N_G(H)$ the subgraph of $G$ induced on the vertices that have a neighbour in $H$ but are not in $H$, and we denote by $R_G(H)$ the subgraph induced on the vertices of $G$ that are neither in $H$ nor have a neighbour in $H$. If the graph $G$ is clear from the context, we will simply use $N(H)$ and $R(H)$.

Let $G$ be a graph.  We say $G$ is $k$-regular if the valency of every vertex in $G$ is a non-negative constant integer $k$. A graph $G$ of order $n$ is said to be strongly regular with parameters $(n,k,\lambda,\mu)$, if it is $k$-regular, every pair of adjacent vertices has $\lambda$ common neighbours, and every pair of distinct nonadjacent vertices has $\mu$ common neighbours. The following lemma is well-known {\rm (cf.~\cite[Section 10.1 and 10.2]{godsil2013})}.
\begin{lemma}\label{SRG}
Let $G$ be an $(n,k,\lambda,\mu)$ strongly regular graph with $k>\mu$. Then $G$ has exactly three distinct eigenvalues $k>\theta>\tau$ satisfying
\begin{align*}
\theta & =\frac{(\lambda-\mu)+\sqrt{(\lambda-\mu)^2+4(k-\mu)}}{2},\\
\tau & =\frac{(\lambda-\mu)-\sqrt{(\lambda-\mu)^2+4(k-\mu)}}{2}.
\end{align*}
Moreover, the multiplicity $m_\theta$ of $\theta$ is given by $\displaystyle m_{\theta}=-\frac{(n-1)\tau+k}{\theta-\tau}$.
\end{lemma}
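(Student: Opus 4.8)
The plan is to derive the quadratic matrix identity characterising a strongly regular graph and then read off its spectrum. Write $A=A(G)$ and let $J$ denote the all-ones matrix of order $n$. Counting walks of length two between vertices $x$ and $y$, distinguishing the cases $x=y$, $xy\in E(G)$, and $xy\notin E(G)$ with $x\neq y$, gives
$$A^{2}=k\mathbf{I}+\lambda A+\mu(J-\mathbf{I}-A),$$
or equivalently $A^{2}-(\lambda-\mu)A-(k-\mu)\mathbf{I}=\mu J$. Since $G$ is $k$-regular, the all-ones vector $\mathbf{1}$ is an eigenvector of $A$ with eigenvalue $k$. We may assume $G$ is connected: if $\mu=0$ then, since $k>0$, one checks that $G$ is a disjoint union of cliques, a degenerate case normally excluded from the definition. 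Then by the Perron--Frobenius theorem $k$ is a simple eigenvalue of $A$ and every other eigenvector is orthogonal to $\mathbf{1}$, hence annihilated by $J$.

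Next I would apply the identity to an eigenvector $v$ orthogonal to $\mathbf{1}$ with $Av=\eta v$; since $Jv=0$ this forces $\eta^{2}-(\lambda-\mu)\eta-(k-\mu)=0$, so $\eta$ equals one of the two roots of $x^{2}-(\lambda-\mu)x-(k-\mu)$, which are precisely the $\theta$ and $\tau$ displayed in the statement. Because $k>\mu$ the discriminant $(\lambda-\mu)^{2}+4(k-\mu)$ is strictly positive, so $\theta$ and $\tau$ are real and distinct, and their product $-(k-\mu)$ is negative, whence $\theta>0>\tau$. A short integrality argument (using $\theta+\tau=\lambda-\mu\in\mathbb{Z}$ and $\theta\tau=-(k-\mu)$) rules out $k=\theta$, and Perron--Frobenius gives $k>\theta$; one also checks that both $\theta$ and $\tau$ actually occur as eigenvalues, since otherwise $A$ would be a linear combination of $\mathbf{I}$ and $J$, forcing $G$ to be empty or complete. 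Hence $G$ has exactly the three eigenvalues $k>\theta>\tau$.

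For the multiplicities, let $m_{\theta}$ and $m_{\tau}$ denote the multiplicities of $\theta$ and $\tau$. Counting dimensions gives $1+m_{\theta}+m_{\tau}=n$, and since $A$ has zero diagonal its eigenvalues sum to zero, so $k+m_{\theta}\theta+m_{\tau}\tau=0$. Substituting $m_{\tau}=n-1-m_{\theta}$ into the second relation yields $k+m_{\theta}(\theta-\tau)+(n-1)\tau=0$, and solving for $m_{\theta}$ gives $m_{\theta}=-\frac{(n-1)\tau+k}{\theta-\tau}$, as claimed.

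There is no genuine obstacle here; it is a classical computation. The only mildly delicate point is confirming that $G$ has exactly three distinct eigenvalues — in particular that $k$, $\theta$, $\tau$ are pairwise distinct and that $\theta$ and $\tau$ both really occur — since this is what legitimises solving the two linear equations above for $m_{\theta}$ and $m_{\tau}$, and this is exactly where the hypothesis $k>\mu$ (together with the convention that a strongly regular graph is connected) is used.
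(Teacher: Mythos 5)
Your proof is correct and is exactly the standard argument the paper implicitly relies on by citing Godsil--Royle: the quadratic identity $A^{2}=k\mathbf{I}+\lambda A+\mu(\mathbf{J}-\mathbf{I}-A)$, restriction to $\mathbf{1}^{\perp}$ to get the two roots $\theta,\tau$, and the trace/dimension count for $m_{\theta}$ (the paper gives no proof of its own). Your side remark is also the right one: as literally stated the hypothesis $k>\mu$ does not exclude $\mu=0$ (disjoint unions of cliques, where $\theta=k$ and only two eigenvalues occur), so the connectedness convention you invoke is genuinely needed; it is harmless here since the paper only applies the lemma to the putative $(66,13,0,3)$ graph.
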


\subsection{Seidel matrices}

Recall that a Seidel matrix $S$ of order $n$ is a symmetric $(0, \pm1)$-matrix
with $0$ on the diagonal and $\pm1$ otherwise.
The graph $G=G(S)$ corresponding to a Seidel matrix $S$ is the graph on $\{1,\ldots,n\}$ such that two distinct vertices $i$ and $j$ are adjacent if and only if $S_{ij}=-1$.
It follows immediately that $A(G) = \frac{1}{2} (\mathbf{J} - \mathbf{I} - S)$, where $\mathbf{J}$ is the all-ones matrix and $\mathbf{I}$ is the identity matrix.
Conversely, the Seidel matrix $S=S(G)$ corresponding to a graph $G$ can be obtained by $S=\mathbf{J-I}-2A(G)$.

Let $U\subseteq \{1,\ldots,n\}$. Define the diagonal matrix $D_U$ by $(D_U)_{ii}=1$ if $i\in U$ and $(D_U)_{ii}=-1$ if $i\notin U$. For a Seidel matrix $S$ we define the Seidel matrix $S_{sw}(U)$ by $S_{sw}(U):=D_USD_U$. For a graph $G$ with Seidel matrix $S(G)$ we denote by $G_{sw}(U)$ the graph $G((S(G))_{sw}(U))$. In other words, the graph $G_{sw}(U)$ is obtained from $G$ by switching with respect to $U$. If $G$ and $H$ are switching equivalent, then $S(G)$ and $S(H)$ are similar and hence have the same spectrum. The collection of graphs that can be obtained from $G$ by switching is called the switching class of $G$, denoted by $[G]$. For a Seidel matrix $S$, we define $[S]$ as $[S]:=[G]$, where $G$ is the corresponding graph of $S$. We call $[S]$ the switching class of $S$.

Let $S$ be a Seidel matrix
of order $n\geqslant2$. Let $S':=(\begin{smallmatrix}S & -S+\mathbf{I} \\  -S+\mathbf{I} & S\end{smallmatrix})$.
The graph $\mathcal{Sw}(S) := G(S')$ is called the switching graph of $S$. Note that $\mathcal{Sw}(S)$ only depends on $[S]$, that is, $\mathcal{Sw}(S_1)\cong\mathcal{Sw}(S_2)$ if and only if $S_1$ and $S_2$ are switching equivalent.
We define the independence number (resp. clique number) of $[S]$ as $\alpha([S]):=\alpha(\mathcal{Sw}(S))$ (resp. $\omega([S]):=\omega(\mathcal{Sw}(S))$). Note that $\alpha(\mathcal{Sw}(S))\geqslant 2$ and $\alpha(\mathcal{Sw}(S))=2$ if and only if $[S]=[\mathbf{I-J}]$. Similarly, $\omega(\mathcal{Sw}(S))\geqslant 2$ and $\omega(\mathcal{Sw}(S))=2$ if and only if $[S]=[\mathbf{J-I}]$.

\subsection{Some bounds on the smallest eigenvalue}
Let $G$ be a graph. From now on, we will use $\lambda_i$ to denote the eigenvalues of the Seidel matrix $S(G)$, and by $\theta_i$ to denote the eigenvalues of the adjacency matrix $A(G)$.

\begin{lemma}\label{SandA}
Let $S$ be a Seidel matrix, and $G$ be its corresponding graph. Then,
\begin{enumerate}[(i)]
  \item $\lambda_{\min}(S)\geqslant-2\rho(G)-1$;
  \item For any induced subgraph $H$ of $G$, we have  $\lambda_{\min}(S(H))\geqslant\lambda_{\min}(S)$.
\end{enumerate}
\end{lemma}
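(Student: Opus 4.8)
The plan is to obtain both parts directly from the identity $S(G)=\mathbf{J}-\mathbf{I}-2A(G)$ recorded above, together with Weyl's inequality and the interlacing theorem. For (i), I would split $S=\mathbf{J}-(\mathbf{I}+2A(G))$ as a sum of two real symmetric matrices and apply the Weyl bound $\lambda_{\min}(M_1+M_2)\geqslant\lambda_{\min}(M_1)+\lambda_{\min}(M_2)$. Since $\mathbf{J}$ is positive semidefinite we have $\lambda_{\min}(\mathbf{J})\geqslant 0$, while $\lambda_{\min}\bigl(-(\mathbf{I}+2A(G))\bigr)=-\lambda_{\max}(\mathbf{I}+2A(G))=-1-2\rho(G)$ because $\rho(G)=\lambda_{\max}(A(G))$. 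Adding these gives $\lambda_{\min}(S)\geqslant-2\rho(G)-1$. (Equivalently: $S\succeq -\mathbf{I}-2A(G)$ in the Loewner order because $\mathbf{J}\succeq 0$, and the smallest eigenvalue is monotone with respect to that order.)

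For (ii), the key point is that passing to an induced subgraph corresponds exactly to taking a principal submatrix of the Seidel matrix: if $H=G_U$ for some $U\subseteq V(G)$, then for distinct $i,j\in U$ the entry $(S(H))_{ij}$ is $-1$ or $+1$ according as $ij\in E(G)$ or not, so $S(H)$ is precisely the principal submatrix of $S(G)$ indexed by $U$. If $U=V(G)$ the claim is trivial; otherwise set $m:=n_H<n:=n_G$ and apply Theorem~\ref{interlacing} with $B=S(G)$ and $C=S(H)$. Taking $i=m$ in the interlacing inequalities $\eta_{n-m+i}(B)\leqslant\eta_i(C)\leqslant\eta_i(B)$ gives $\eta_n(B)\leqslant\eta_m(C)$, i.e.\ $\lambda_{\min}(S(G))\leqslant\lambda_{\min}(S(H))$.

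Neither part poses a real obstacle. The only things to watch are the sign in the relation between $S$ and $A(G)$ — so that the term $-2A(G)$, not $+2A(G)$, appears, and one bounds $\lambda_{\max}(A(G))=\rho(G)$ rather than $\lambda_{\min}(A(G))$ — and the elementary but essential observation that it is precisely the correspondence ``induced subgraph $\leftrightarrow$ principal submatrix'' that makes interlacing applicable, which would fail for a non-induced subgraph.
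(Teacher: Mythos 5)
Your proof is correct and follows essentially the same route as the paper: part (i) is exactly the ``immediate consequence'' of $S=\mathbf{J}-\mathbf{I}-2A(G)$ that the paper invokes (made explicit via Weyl's inequality, or equivalently the Loewner-order monotonicity you mention, together with $\mathbf{J}\succeq 0$), and part (ii) is the same application of Theorem~\ref{interlacing} after noting that an induced subgraph yields a principal submatrix of the Seidel matrix. No gaps; you have simply written out the details the paper leaves to the reader.
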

\begin{proof}
The first item follows immediately from the fact that $S=\mathbf{J-I}-2A(G)$. The second item is an easy consequence of Theorem \ref{interlacing}.
\end{proof}

Note that the Perron-Frobenius Theorem {\rm (cf.~\cite[Theorem 3.1.1]{brouwer1989distance})} implies that the spectral radius $\rho(G)$ of a connected graph $G$ is simple, and we can take an eigenvector for $\rho(G)$ with positive entries only. This means that, for any graph $G$, there exists an eigenvector $\mathbf{v}$ for the eigenvalue $\rho(G)$ with non-negative entries only.

\begin{lemma}\label{onelarge}
Let $S$ be a Seidel matrix with the smallest eigenvalue $\lambda_{\min}$. Let $G$ be its corresponding graph of $S$ with adjacency matrix $A$ and spectral radius $\displaystyle \rho~(= \frac{-\lambda_{\min}-1}{2})$.  Assume that $\mathbf{v}$ is an eigenvector of A with eigenvalue $\rho$, that is, $A\mathbf{v} = \rho \mathbf{v}$, and that $\mathbf v$ is not perpendicular to the all-ones vector $\mathbf j$. If there exists another eigenvector $\mathbf{w}$ of $A$ not perpendicular to the all-ones vector $\mathbf j$, say with eigenvalue $\theta\neq\rho$, then $\theta < \dfrac{-\lambda_{\min}-1}{2}$.
\end{lemma}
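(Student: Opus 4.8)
The plan is to produce one well-chosen test vector and play it off against the Rayleigh bound $\mathbf{x}^{\top}S\mathbf{x}\geqslant\lambda_{\min}\|\mathbf{x}\|^{2}$. First I would normalise $\mathbf{v}$ and $\mathbf{w}$ to unit length; since they are eigenvectors of the symmetric matrix $A$ for the distinct eigenvalues $\rho\neq\theta$, they are orthogonal. Set $a:=\mathbf{j}^{\top}\mathbf{v}$ and $b:=\mathbf{j}^{\top}\mathbf{w}$; by hypothesis $a\neq0$ and $b\neq0$.

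The key idea is to work with $\mathbf{u}:=b\mathbf{v}-a\mathbf{w}$, which is, up to a scalar, the unique vector in $\operatorname{span}\{\mathbf{v},\mathbf{w}\}$ orthogonal to $\mathbf{j}$: indeed $\mathbf{j}^{\top}\mathbf{u}=ba-ab=0$, and $\|\mathbf{u}\|^{2}=a^{2}+b^{2}>0$. This vector is useful for two reasons at once: it lies in a plane on which $A$ acts by known eigenvalues, and the rank-one summand $\mathbf{J}$ of $S$ annihilates it. Using $S=\mathbf{J}-\mathbf{I}-2A$ together with $\mathbf{J}\mathbf{u}=\mathbf{0}$, $\mathbf{v}\perp\mathbf{w}$ and $A\mathbf{v}=\rho\mathbf{v}$, $A\mathbf{w}=\theta\mathbf{w}$, a direct computation gives
\[
\mathbf{u}^{\top}S\mathbf{u}=-\|\mathbf{u}\|^{2}-2\mathbf{u}^{\top}A\mathbf{u}=-b^{2}(1+2\rho)-a^{2}(1+2\theta).
\]

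At this point I would invoke the hypothesis $\rho=\tfrac{-\lambda_{\min}-1}{2}$, equivalently $1+2\rho=-\lambda_{\min}$; this is precisely the relation that turns $-b^{2}(1+2\rho)$ into $b^{2}\lambda_{\min}$, so that $\mathbf{u}^{\top}S\mathbf{u}=b^{2}\lambda_{\min}-a^{2}(1+2\theta)$. Comparing with the Rayleigh bound $\mathbf{u}^{\top}S\mathbf{u}\geqslant\lambda_{\min}\|\mathbf{u}\|^{2}=\lambda_{\min}(a^{2}+b^{2})$ and cancelling $b^{2}\lambda_{\min}$ leaves $-a^{2}(1+2\theta)\geqslant\lambda_{\min}a^{2}$; dividing by $a^{2}>0$ yields $1+2\theta\leqslant-\lambda_{\min}$, i.e.\ $\theta\leqslant\tfrac{-\lambda_{\min}-1}{2}$. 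Since $\theta\neq\rho=\tfrac{-\lambda_{\min}-1}{2}$ by assumption, the inequality is strict, which is the asserted conclusion.

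I do not expect a real obstacle here; the only thing one has to spot is the choice of $\mathbf{u}$, and the observation that the prescribed value of $\rho$ is exactly the one for which the $\rho$-eigenvector contributes nothing to the final inequality. An equivalent packaging is to restrict $S$ to the plane $\operatorname{span}\{\mathbf{v},\mathbf{w}\}$, obtaining a symmetric $2\times2$ matrix $M$ with $M\succeq\lambda_{\min}\mathbf{I}$ whose $(1,1)$-entry equals $a^{2}+\lambda_{\min}$ and whose off-diagonal entry equals $ab$; expanding $\det(M-\lambda_{\min}\mathbf{I})\geqslant0$ then gives the same bound $\theta\leqslant\tfrac{-\lambda_{\min}-1}{2}$.
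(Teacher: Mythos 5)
Your proof is correct and takes essentially the same route as the paper: both select the (up to scaling unique) vector $\mathbf{u}\in\operatorname{span}\{\mathbf{v},\mathbf{w}\}$ with $\mathbf{u}\perp\mathbf{j}$, so that $\mathbf{J}\mathbf{u}=\mathbf{0}$ and $S=\mathbf{J}-\mathbf{I}-2A$ reduces to $-\mathbf{I}-2A$ on $\mathbf{u}$, and then compare $\mathbf{u}^{\top}S\mathbf{u}$ against the bound $\lambda_{\min}\|\mathbf{u}\|^{2}$. Your explicit use of $1+2\rho=-\lambda_{\min}$ to cancel the $\mathbf{v}$-contribution is simply a cleaner rendering of the paper's terse step ``$S\mathbf{u}\leqslant(-1-2\theta)\mathbf{u}$''.
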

\begin{proof}
We denote by $\mathbf{U}$ the $2$-dimensional space spanned by $\mathbf{v}$ and $\mathbf{w}$. Then there exists a non-zero vector $\mathbf{u}\in \mathbf{U}-\{\textbf{0}\}$ such that $\mathbf{u\perp j}$. We find $S\textbf{u}\leqslant(-1-2\theta)\textbf{u}$, but equality would imply that $\rho=\theta$, as both $\mathbf{v}$ and $\mathbf{w}$ are not perpendicular to $\mathbf{j}$. Hence, $\lambda_{\min}<-1-2\theta$ and the conclusion holds.
\end{proof}

Lemma~\ref{onelarge} immediately implies the following proposition.

\begin{proposition}
If $\displaystyle\rho(G) > \frac{-\lambda_{\min}-1}{2}$, then any eigenvector for the eigenvalue $\displaystyle\frac{-\lambda_{\min}-1}{2}$ is perpendicular to $\mathbf{j}$.
\end{proposition}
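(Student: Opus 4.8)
The plan is to argue by contradiction and reduce directly to Lemma~\ref{onelarge}. Write $\theta_0:=\frac{-\lambda_{\min}-1}{2}$, where $\lambda_{\min}=\lambda_{\min}(S)$. The assertion is vacuous unless $\theta_0$ is actually an eigenvalue of $A=A(G)$, so suppose it is and let $\mathbf{w}$ be any eigenvector of $A$ with $A\mathbf{w}=\theta_0\mathbf{w}$. Assume, towards a contradiction, that $\mathbf{w}$ is not perpendicular to $\mathbf{j}$.

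Next I would produce the second eigenvector demanded by Lemma~\ref{onelarge}. By the Perron--Frobenius remark preceding that lemma, there is an eigenvector $\mathbf{v}$ of $A$ for the eigenvalue $\rho(G)$ all of whose entries are non-negative; since $\mathbf{v}\neq\mathbf{0}$, this forces $\mathbf{v}^\top\mathbf{j}>0$, so $\mathbf{v}$ is not perpendicular to $\mathbf{j}$. The hypothesis $\rho(G)>\theta_0$ guarantees $\theta_0\neq\rho(G)$, so the pair $(\mathbf{v},\mathbf{w})$ meets exactly the hypotheses of Lemma~\ref{onelarge} with $\theta=\theta_0$. Applying the lemma yields $\theta_0<\frac{-\lambda_{\min}-1}{2}=\theta_0$, which is absurd. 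Hence $\mathbf{w}\perp\mathbf{j}$, and since $\mathbf{w}$ was an arbitrary eigenvector for $\theta_0$, the proposition follows.

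There is essentially no obstacle here: the substantive content (the spectral-radius/Rayleigh-quotient estimate) has already been isolated in Lemma~\ref{onelarge}, and what remains is only bookkeeping. The two points deserving a line of care are (i) observing that the claim is trivial unless $\frac{-\lambda_{\min}-1}{2}$ is an eigenvalue of $A(G)$, and (ii) checking that a non-negative Perron eigenvector cannot be orthogonal to $\mathbf{j}$; both are immediate.
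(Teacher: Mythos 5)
Your argument is exactly the deduction the paper has in mind: the proposition is stated with no separate proof beyond the remark that it follows immediately from Lemma~\ref{onelarge}, and your contradiction argument (Perron eigenvector $\mathbf{v}\not\perp\mathbf{j}$ plus a hypothetical eigenvector $\mathbf{w}\not\perp\mathbf{j}$ for $\theta_0=\frac{-\lambda_{\min}-1}{2}$, yielding $\theta_0<\theta_0$) is precisely that immediate deduction, correctly carried out. No issues.
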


The next proposition says that there exists at most one connected component of a graph $G$ whose spectral radius is larger than $\dfrac{-\lambda_{\min}(S(G))-1}{2}$.

\begin{proposition}\label{prop:one-large}
Let $S$ be a Seidel matrix with the smallest eigenvalue $\lambda_{\min}$. Let $G$ be its corresponding graph of $S$. Let $H$ be an induced subgraph of $G$. Let $R(H)$ be the subgraph of $G$ induced by the vertices which are neither in $H$ nor are adjacent to any vertex in $H$, that is, $V(R(H))=\{x\in V(G)\mid x \notin V(H), x\nsim y, \,\forall\, y\in V(H) \}$. If $\rho(H)>\dfrac{-\lambda_{\min}-1}{2}$, then $\rho(R(H))<\dfrac{-\lambda_{\min}-1}{2}$.
\end{proposition}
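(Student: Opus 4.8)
The plan is to argue by contradiction: suppose both $\rho(H) > \frac{-\lambda_{\min}-1}{2}$ and $\rho(R(H)) \geqslant \frac{-\lambda_{\min}-1}{2}$. Set $\rho_0 := \frac{-\lambda_{\min}-1}{2}$, so $\rho_0 = \rho(G(S))$'s threshold value; note that by Lemma~\ref{SandA}(i) applied to $G$ we have $\rho(G) \geqslant \rho_0$. Since $H$ and $R(H)$ are vertex-disjoint induced subgraphs of $G$ with no edges between them (by the definition of $R(H)$), the disjoint union $H \,\dot\cup\, R(H)$ is an induced subgraph of $G$, and its adjacency matrix is the block-diagonal matrix $A(H) \oplus A(R(H))$. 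Hence $\rho(H \,\dot\cup\, R(H)) = \max\{\rho(H), \rho(R(H))\}$, and more importantly both $\rho(H)$ and $\rho(R(H))$ are eigenvalues of $A(H \,\dot\cup\, R(H))$.

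Next I would translate this into a statement about the Seidel matrix. Let $G' := H \,\dot\cup\, R(H)$ and let $S' := S(G')$, which is a principal submatrix of $S = S(G)$ (after a switching, which does not affect the spectrum — but in fact we don't even need switching here since $S(G')$ is literally the principal submatrix of $S(G)$ on $V(H) \cup V(R(H))$). By Lemma~\ref{SandA}(ii), $\lambda_{\min}(S') \geqslant \lambda_{\min}(S) = \lambda_{\min}$. Now I want to produce an eigenvector of $A(G')$ with eigenvalue strictly larger than $\rho_0$ that is \emph{not} perpendicular to $\mathbf{j}$, and then a second such eigenvector, so that Lemma~\ref{onelarge} (applied to $S'$ and $G'$) gives a contradiction. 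Take $\mathbf{v}_1$ a Perron eigenvector of the component corresponding to $H$ (nonnegative, supported on $V(H)$, hence not perpendicular to $\mathbf{j}$ since it is nonzero and nonnegative), with eigenvalue $\rho(H) > \rho_0$; take $\mathbf{v}_2$ a Perron eigenvector of the component corresponding to $R(H)$ (nonnegative, supported on $V(R(H))$, not perpendicular to $\mathbf{j}$), with eigenvalue $\rho(R(H)) \geqslant \rho_0$. These are eigenvectors of $A(G')$ for distinct eigenvalues (distinct because their supports are disjoint and nonempty, so they are linearly independent eigenvectors; if the eigenvalues were equal we could still separate them, but we only need them when $\rho(H) \neq \rho(R(H))$, which holds if $\rho(R(H)) < \rho(H)$; the boundary case $\rho(R(H)) = \rho(H) > \rho_0$ is handled by the same argument below).

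Now apply Lemma~\ref{onelarge} to $S'$ with $A = A(G')$: we need one of the two eigenvectors to have eigenvalue exactly $\rho(G') = \frac{-\lambda_{\min}(S')-1}{2}$, which is not quite the setup. So instead I would argue more directly: since $\mathbf{v}_1, \mathbf{v}_2$ span a $2$-dimensional space on which $A(G')$ acts with both eigenvalues $\geqslant \rho_0$ and at least one $> \rho_0$, there is a nonzero $\mathbf{u}$ in this span with $\mathbf{u} \perp \mathbf{j}$, and then $S' \mathbf{u} = (\mathbf{J} - \mathbf{I} - 2A(G'))\mathbf{u} = -\mathbf{u} - 2A(G')\mathbf{u}$; writing $\mathbf{u} = a\mathbf{v}_1 + b\mathbf{v}_2$ we get $A(G')\mathbf{u} = a\rho(H)\mathbf{v}_1 + b\rho(R(H))\mathbf{v}_2$, and since not both coefficients $a\rho(H), b\rho(R(H))$ can match $a\rho_0, b\rho_0$ simultaneously (as $\rho(H) > \rho_0$ and the vectors are independent), we find $\langle S'\mathbf{u}, \mathbf{u}\rangle < (-1 - 2\rho_0)\langle \mathbf{u},\mathbf{u}\rangle = \lambda_{\min}\langle\mathbf{u},\mathbf{u}\rangle$, whence $\lambda_{\min}(S') < \lambda_{\min}$, contradicting Lemma~\ref{SandA}(ii). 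This is essentially re-running the proof of Lemma~\ref{onelarge} in this disjoint-union setting. The main obstacle — really the only subtlety — is making sure the two Perron eigenvectors are genuinely independent and both non-orthogonal to $\mathbf{j}$ so that the two-dimensional space they span meets $\mathbf{j}^\perp$ nontrivially; this is immediate because each is nonzero and entrywise nonnegative on a nonempty support, and their supports are disjoint. One should also note the degenerate cases where $H$ or $R(H)$ is empty or edgeless (spectral radius $0 < \rho_0$, since $\lambda_{\min} \leqslant -1$ whenever $n \geqslant 2$ makes $\rho_0 \geqslant 0$, and $\rho_0 > 0$ as soon as $\lambda_{\min} < -1$), but if $R(H)$ is empty the conclusion $\rho(R(H)) = 0 < \rho_0$ is automatic, and the hypothesis $\rho(H) > \rho_0$ already forces $\rho_0 < \rho(H)$ so in the interesting range everything is fine.
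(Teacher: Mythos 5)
Your proof is correct and follows essentially the same route as the paper: form the disjoint union $H\,\dot\cup\,R(H)$, use interlacing to get $\lambda_{\min}(S')\geqslant\lambda_{\min}$, extend the two Perron vectors by zero so that both are non-perpendicular to $\mathbf{j}$ with disjoint supports, and derive a contradiction from a vector in their span orthogonal to $\mathbf{j}$ (the paper simply cites Lemma~\ref{onelarge} at this point). Your inline re-derivation of that last step is a reasonable, and in fact slightly more careful, substitute, since it also covers the case $\rho(H)=\rho(R(H))$ and does not require either eigenvalue to equal $\frac{-\lambda_{\min}(S')-1}{2}$.
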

\begin{proof}
Let $H'$ be the disjoint union of $H$ and $R(H)$. Then the adjacency matrix of $H'$ is a diagonal block matrix $A'$ with two blocks, namely, the adjacency matrix $A(H)$ and $A(R(H))$ of $H$ and $R(H)$, respectively. Let $\mathbf{u}$ (resp.\ $\mathbf{v}$) be an non-negative eigenvector for $\rho(H)$ (resp.\ $\rho(R(H))$), that is, $A(H)\mathbf{u}=\rho(H)\mathbf{u}$ and $A(R(H))\mathbf{v}=\rho(R(H))\mathbf{v}$.

Define $\mathbf{w}$ by
\begin{equation}\nonumber
\mathbf{w}_x=
\begin{cases}
\mathbf{u}_x,& \text{if $x \in V(H)$},\\
0,& \text{if $x\in V(R(H))$},
\end{cases}
\end{equation}
and, in similar fashion, define $\mathbf{w'}$ from $\mathbf{v}$. Note that $A'\mathbf{w}=\rho(H)\mathbf{w}$, $A'\mathbf{w'}=\rho(R(H))\mathbf{w'}$, $\mathbf{w\not\perp j}$, $\mathbf{w'\not\perp j}$ and $\mathbf{w\perp w'}$.

Let $S'$ be the Seidel matrix of $H'$. By Theorem \ref{interlacing}, we have $\lambda_{\min}(S')\geqslant\lambda_{\min}$. So, if $\rho(H)>\dfrac{-\lambda_{\min}-1}{2}$, then $\rho(R(H))<\dfrac{-\lambda_{\min}-1}{2}$ by Lemma \ref{onelarge}. This shows the proposition.
\end{proof}

Let $M$ be a symmetric $n\times n$ matrix and $\pi:=\{V_1,\ldots,V_r\}$ be a partition of $\{1,\ldots,n\}$. Let $M_{ij}$ be the submatrix of $M$ whose rows are indexed by $V_i$ and whose columns are indexed by $V_j$. We say $\pi$ is an equitable partition with respect to $M$ if $M_{ij}$ has constant row sum for all $1\leqslant i,j\leqslant r$. For an equitable partition $\pi$ with respect to $M$, let $q_{ij}$ be the row sum of $M_{ij}$, for $1\leqslant i,j\leqslant r$. The quotient matrix $Q$ of $M$ with respect to $\pi$ is defined as $Q=(q_{ij})_{1\leqslant i,j\leqslant r}$.

\begin{lemma}\label{quotient}
Let $M$ be a symmetric $n\times n$ matrix. If $\pi$ is an equitable partition of $M$ and $Q$ is the quotient matrix with respect to $\pi$ of $M$, then every eigenvalue of $Q$ is an eigenvalue of $M$.
\end{lemma}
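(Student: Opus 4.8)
The plan is to introduce the characteristic matrix of the partition $\pi$ and exploit the linear identity it satisfies with $M$ and $Q$. Write $\pi=\{V_1,\ldots,V_r\}$, and let $P$ be the $n\times r$ matrix whose $j$-th column is the characteristic vector of $V_j$, so that $(P)_{xj}=1$ if $x\in V_j$ and $(P)_{xj}=0$ otherwise. Since the blocks $V_1,\ldots,V_r$ are nonempty and pairwise disjoint, the columns of $P$ are nonzero with pairwise disjoint supports; hence $P$ has rank $r$, and in particular $Pv\neq\mathbf 0$ for every nonzero $v\in\mathbb R^r$.

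The key step is the observation that $\pi$ is equitable with respect to $M$ if and only if $MP=PQ$. To see this, fix $x\in V_i$. The $(x,j)$-entry of $MP$ equals $\sum_{y\in V_j}(M)_{xy}$, which is exactly the row sum, in the row indexed by $x$, of the block $M_{ij}$; by the definition of an equitable partition this equals $q_{ij}$ for every $x\in V_i$. On the other hand the only nonzero entry in the $x$-th row of $P$ is the $i$-th one, so the $(x,j)$-entry of $PQ$ is $q_{ij}$ as well. Comparing entries gives $MP=PQ$.

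It then remains to transfer eigenvalues. Suppose $Qv=\lambda v$ for some nonzero $v$. Then $M(Pv)=(MP)v=(PQ)v=P(Qv)=\lambda(Pv)$, and $Pv\neq\mathbf 0$ since $P$ has full column rank, so $\lambda$ is an eigenvalue of $M$ with eigenvector $Pv$. If one worries that $Q$ need not be symmetric and hence might a priori have a complex eigenvalue $\lambda$ with complex eigenvector $v$, note that then $Pv$ is a nonzero complex vector with $M(Pv)=\lambda(Pv)$; as $M$ is real symmetric this forces $\lambda\in\mathbb R$, so the conclusion is unaffected.

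I do not expect any genuine obstacle in this argument: the only points requiring a little care are the index bookkeeping in the verification of $MP=PQ$ and the remark that $Q$ is in general not symmetric, and neither of these is substantive.
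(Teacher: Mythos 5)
Your proposal is correct and takes essentially the same route as the paper: the lifted vector $P\mathbf{v}$ is exactly the vector $\mathbf{w}$ (constant on the parts, with value $\mathbf{v}_i$ on $V_i$) that the paper constructs, and your identity $MP=PQ$ is just the matrix packaging of the paper's entrywise computation $(M\mathbf{w})_k=\sum_j q_{ij}\mathbf{v}_j=\lambda\mathbf{w}_k$. Your added remarks on $P$ having full column rank and on the reality of the eigenvalues of the (non-symmetric) $Q$ are correct and tidy up points the paper leaves implicit.
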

\begin{proof}
Let $\pi:=\{V_1,\ldots,V_r\}$ be a equitable partition of $M$. Let $\lambda$ be an eigenvalue of $Q$ and $\mathbf{v}$ be an eigenvector of $Q$ with $\lambda$. Let $\mathbf{w}$ be the vector in $\mathbb{R}^n$ such that $\mathbf{w}_k=\mathbf{v}_i$ for $k\in V_i$, $i\in\{1,\ldots,r\}$.

Let $k\in V_i$. Then $\sum\limits_{l\in V_j}M_{kl}=q_{ij}$. It follows that $(M\mathbf{w})_k=\sum\limits_{l=1}^{n}M_{kl}\mathbf{w}_l=\sum\limits_{j=1}^{r}q_{ij}\mathbf{v}_j=\lambda\mathbf{v}_i=\lambda\mathbf{w}_k$. This shows the lemma.
\end{proof}

If $M$ is the adjacency matrix of a graph $G$, and $\pi$ is an equitable partition of $\{1,\ldots,n\}$ with respect to $M$, then we say that $\pi$ is an equitable partition of $G$. Note that in this case $\pi$ is also an equitable partition with respect to the Seidel matrix of $G$.

\begin{corollary}\label{S-quotient}
If $\pi$ is an equitable partition of a graph $G$ and $Q$ is the quotient matrix with respect to the Seidel matrix $S$ of $G$, then every eigenvalue of $Q$ is an eigenvalue of $S$.
\end{corollary}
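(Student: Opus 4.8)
The plan is to deduce this directly from Lemma~\ref{quotient}. The only thing that genuinely needs to be checked is that the partition $\pi$, which by hypothesis is equitable with respect to the adjacency matrix $A=A(G)$, is also equitable with respect to the Seidel matrix $S=S(G)$. Once that is in place, Lemma~\ref{quotient} applied to the symmetric matrix $M=S$ gives the conclusion at once: every eigenvalue of the quotient matrix $Q$ of $S$ with respect to $\pi$ is an eigenvalue of $S$.

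To verify the equitability claim I would use the identity $S=\mathbf{J}-\mathbf{I}-2A$ and argue blockwise. Write $\pi=\{V_1,\ldots,V_r\}$ and fix indices $i,j$. The block $(\mathbf{J})_{ij}$ has all entries equal to $1$, hence constant row sum $|V_j|$. The block $(\mathbf{I})_{ij}$ is the zero matrix when $i\neq j$ (constant row sum $0$) and an identity matrix when $i=j$ (constant row sum $1$); this is the one spot where the diagonal and off-diagonal blocks must be treated separately. By the hypothesis on $A$, the block $A_{ij}$ has constant row sum, say $q^A_{ij}$. A sum of matrices each of which has constant row sums again has constant row sums, so $S_{ij}=(\mathbf{J})_{ij}-(\mathbf{I})_{ij}-2A_{ij}$ has constant row sum for every pair $i,j$; explicitly, the $(i,j)$-entry of the quotient matrix $Q$ of $S$ equals $|V_j|-\delta_{ij}-2q^A_{ij}$, where $\delta_{ij}$ is the Kronecker delta. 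Hence $\pi$ is an equitable partition of $\{1,\ldots,n\}$ with respect to $S$.

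With that established, the final step is a one-line invocation of Lemma~\ref{quotient} with $M=S$, which yields exactly the statement of the corollary. I do not expect any real obstacle here: every ingredient is elementary and the computation is the routine one already foreshadowed by the remark preceding the corollary; the only mild care needed is the separate treatment of diagonal versus off-diagonal blocks of $\mathbf{I}$ when checking the constant-row-sum condition.
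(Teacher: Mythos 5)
Your argument is correct and follows the same route the paper intends: the paper simply remarks that an equitable partition of $G$ is also equitable with respect to $S(G)$ (the blockwise check via $S=\mathbf{J}-\mathbf{I}-2A$ that you carry out explicitly) and then invokes Lemma~\ref{quotient}. Your computation of the quotient entries $|V_j|-\delta_{ij}-2q^A_{ij}$ is accurate, including the separate treatment of the diagonal blocks of $\mathbf{I}$.
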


\subsection{Smith's Theorem}
Now we present Smith's Theorem in the year of $1970$, in which Smith determined all graphs with spectral radius $2$. Note that the corresponding Seidel matrices of these graphs have their smallest eigenvalues at least $-5$.

\begin{theorem}{\rm (Cf.~\cite[Section 3.2]{brouwer1989distance})}\label{spectralradius2}
The only connected graphs having spectral radius $2$ are the following graphs (the number of vertices is one more than the index given).

\begin{figure}[ht]
    \centering
    \begin{tikzpicture}
    \draw (1.5,1.7) node {$\tilde{A}_n\ (n\geqslant2)$};
    \draw (9.5,1.7) node {$\tilde{D}_n\ (n\geqslant4)$};
    \draw (-0.7,-0.8) node {$\tilde{E_6}$};
    \draw (4.8,-0.8) node {$\tilde{E_7}$};
    \draw (11,-0.8) node {$\tilde{E_8}$};
    \tikzstyle{every node}=[draw,circle,fill=white,minimum size=2pt,
                            inner sep=0pt]
                            {every label}=[\tiny]
    \draw (1.5,3.5) node (0) [label=above:$1$] {};
    \draw (-1,2.5) node (1) [label=below:$1$] {}
        -- ++(0:1cm) node (2) [label=below:$1$] {};
    \draw (4,2.5) node (4) [label=below:$1$] {}
        -- ++(180:1cm) node (3) [label=below:$1$] {};
    \draw [dashed] (2) -- (3);
    \draw (0) -- (1);
    \draw (0) -- (4);

    \draw (7,3.5) node (01) [label=above:$1$] {};
    \draw (12,3.5) node (02) [label=above:$1$] {};
    \draw (6,2.5) node (11) [label=below:$1$] {}
        -- ++(0:1cm) node (12) [label=below:$2$] {}
        -- ++(0:1cm) node (13) [label=below:$2$] {};
    \draw (11,2.5) node (14) [label=below:$2$] {}
        -- ++(0:1cm) node (15) [label=below:$2$] {}
        -- ++(0:1cm) node (16) [label=below:$1$] {};
    \draw [dashed] (13) -- (14);
    \draw (01) -- (12);
    \draw (02) -- (15);

   \draw (-1.8,0) node (1b) [label=below:$1$] {}
       -- ++(0:0.8cm) node (2b) [label=below:$2$] {}
       -- ++(0:0.8cm) node (3b) [label=below:$3$] {}
       -- ++(0:0.8cm) node (4b) [label=below:$2$] {}
       -- ++(0:0.8cm) node (5b) [label=below:$1$] {};
   \draw (3b)
       -- ++(90:0.5cm) node (6b) [label=right:$2$] {}
       -- ++(90:0.5cm) node (7b) [label=right:$1$] {};

   \draw (2.2,0) node (1c) [label=below:$1$] {}
       -- ++(0:0.8cm) node (2c) [label=below:$2$] {}
       -- ++(0:0.8cm) node (3c) [label=below:$3$] {}
       -- ++(0:0.8cm) node (4c) [label=below:$4$] {}
       -- ++(0:0.8cm) node (5c) [label=below:$3$] {}
       -- ++(0:0.8cm) node (6c) [label=below:$2$] {}
       -- ++(0:0.8cm) node (7c) [label=below:$1$] {};
   \draw (4c)
       -- ++(90:0.5cm) node (8c) [label=right:$2$] {};

   \draw (7.8,0) node (1d) [label=below:$2$] {}
       -- ++(0:0.8cm) node (2d) [label=below:$4$] {}
       -- ++(0:0.8cm) node (3d) [label=below:$6$] {}
       -- ++(0:0.8cm) node (4d) [label=below:$5$] {}
       -- ++(0:0.8cm) node (5d) [label=below:$4$] {}
       -- ++(0:0.8cm) node (6d) [label=below:$3$] {}
       -- ++(0:0.8cm) node (7d) [label=below:$2$] {}
       -- ++(0:0.8cm) node (8d) [label=below:$1$] {};
   \draw (3d)
       -- ++(90:0.5cm) node (9d) [label=right:$3$] {};
    \end{tikzpicture}
\end{figure}

For each graph, the corresponding eigenvector is indicated by the integers at the vertices. Moreover, each connected graph with spectral radius less than $2$ is a subgraph of the above graphs, and each connected graph with spectral radius greater than $2$ contains one of these graphs.
\end{theorem}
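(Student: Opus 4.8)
The plan is to recover Smith's classification from the Perron--Frobenius theorem in three moves: (i) check that each displayed graph has spectral radius exactly $2$; (ii) prove strict monotonicity of the spectral radius under passing to proper connected subgraphs; and (iii) establish a purely combinatorial dichotomy for connected graphs. For (i) I would take, for each of $\tilde A_n,\tilde D_n,\tilde E_6,\tilde E_7,\tilde E_8$, the positive vector $\mathbf v$ whose entries are the integer labels printed in the figure, and verify --- by inspecting the (at most three) isomorphism types of vertex in each graph --- that the sum of the labels over the neighbours of any vertex equals twice its own label, i.e.\ $A(G)\mathbf v=2\mathbf v$. Since $\mathbf v>0$ and $G$ is connected, the Perron--Frobenius theorem then gives $\rho(G)=2$ for each such $G$.

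For (ii): if $H$ is a subgraph of $G$ then $A(H)\leqslant A(G)$ entrywise, hence $\rho(H)\leqslant\rho(G)$; and if $G$ is connected and $H\subsetneq G$ is a proper subgraph, I would take a non-negative Perron eigenvector of a connected component of $H$ realising $\rho(H)$, extend it by zeros to $V(G)$, and note that connectivity of $G$ forces $A(G)\mathbf y\geqslant\rho(H)\mathbf y$ coordinatewise with strict inequality in at least one coordinate (an edge of $G$ either joins two vertices of that component or reaches a vertex outside it); the irreducible form of Perron--Frobenius then yields $\rho(G)>\rho(H)$. Two consequences: every proper connected subgraph of a displayed graph has spectral radius strictly less than $2$, and every connected graph that properly contains a displayed graph has spectral radius strictly greater than $2$.

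The heart of the argument is (iii): \emph{every connected graph $G$ is either a subgraph of some displayed graph, or properly contains some displayed graph.} Granting this, the theorem is immediate: by (i)--(ii) the first alternative gives $\rho(G)\leqslant2$ with equality exactly for the displayed graphs, and the second gives $\rho(G)>2$, so all three assertions follow. I would prove the dichotomy by cases on the structure of $G$. If $G$ is a path or a cycle it embeds in some $\tilde A_n$. If $G$ contains a cycle but is not one, it properly contains some $C_k=\tilde A_{k-1}$. Otherwise $G$ is a tree with a vertex of degree $\geqslant3$: a vertex of degree $\geqslant5$ gives $G\supseteq K_{1,5}\supsetneq\tilde D_4$; a vertex of degree $4$ gives $G\supseteq K_{1,4}=\tilde D_4$ (equality or proper). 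So we may assume $G$ is a tree of maximum degree $3$. If $G$ has at least two vertices of degree $3$, the path joining two of them, together with one extra branch at each of its endpoints, contains some $\tilde D_m$. If $G$ has exactly one vertex of degree $3$, then $G$ is a spider $T(a,b,c)$ with leg lengths $1\leqslant a\leqslant b\leqslant c$, and comparing against $\tilde E_6=T(2,2,2)$, $\tilde E_7=T(1,3,3)$, $\tilde E_8=T(1,2,5)$, and $\tilde D_{c+3}\supseteq T(1,1,c)$ handles the subcases $a\geqslant2$, $(a,b)=(1,1)$, $(a,b)=(1,2)$ (split further at $c\leqslant5$ versus $c\geqslant6$), and $a=1,\,b\geqslant3$; in each of these $G$ either equals a displayed graph, embeds in one, or properly contains one.

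I expect the only real work to lie in step (iii) --- in particular the spider analysis and keeping careful track of which containments are proper, so that one knows whether $G$ \emph{is} a displayed graph or strictly dominates one. Steps (i) and (ii) are routine applications of Perron--Frobenius; it is worth noting that interlacing (Theorem~\ref{interlacing}), the paper's other workhorse, plays no role in this particular proof.
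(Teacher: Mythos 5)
Your proposal is correct, but note that the paper does not prove this statement at all: it is Smith's classical classification, quoted verbatim with a citation to \cite[Section 3.2]{brouwer1989distance}, so there is no in-paper proof to compare against. What you have written is essentially the standard textbook argument (the ADE classification of connected graphs by spectral radius): step (i) is a finite check that the printed labels give a positive eigenvector for the eigenvalue $2$, step (ii) is the strict monotonicity of the Perron root under proper connected containment, and step (iii) is the combinatorial dichotomy, whose only substantive case is the three-legged spider $T(a,b,c)$ compared against $T(2,2,2)=\tilde{E}_6$, $T(1,3,3)=\tilde{E}_7$, $T(1,2,5)=\tilde{E}_8$ and $T(1,1,c)\subsetneq\tilde{D}_{c+3}$; your case split there is exhaustive and each containment is correctly identified as proper or not. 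One small wording slip: when $G$ is a tree of maximum degree $3$ with two vertices of degree $3$, you need \emph{both} off-path neighbours at each endpoint of the connecting path (not ``one extra branch'') to produce $\tilde{D}_m$; since each endpoint has degree $3$ and only one path-neighbour, these two extra neighbours exist and, $G$ being a tree, are all distinct, so the argument goes through.
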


\begin{remark}
{\rm This theorem shows that each graph with spectral radius less than $2$ is a forest. }
\end{remark}

As an easy consequence of Theorem \ref{spectralradius2}, we determine the minimal graphs with spectral radius larger than $2$, that is, the graphs with spectral radius larger than $2$ such that any proper induced subgraph has spectral radius at most $2$.

\begin{corollary}\label{>2}
The minimal graphs with spectral radius greater than $2$ are the $18$ graphs listed in Figure~$\ref{fig:>2}$.

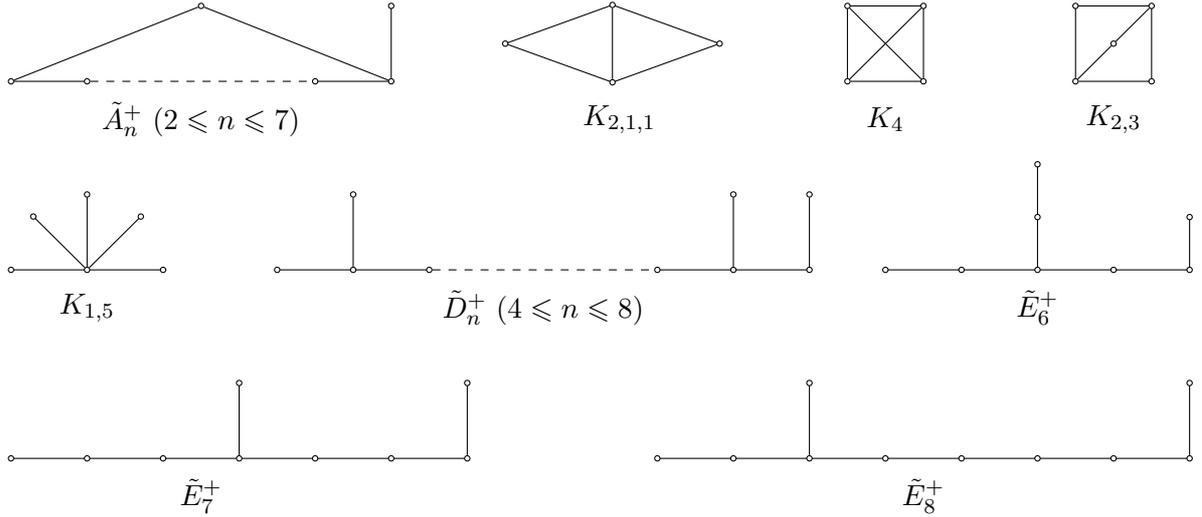
\begin{figure}[ht]
    \centering
    \begin{tikzpicture}
    \draw (2,3.5) node {$\tilde{A}^+_n\ (2\leqslant n\leqslant7)$};
    \draw (7.5,3.5) node {$K_{2,1,1}$};
    \draw (11,3.5) node {$K_4$};
    \draw (14,3.5) node {$K_{2,3}$};
    \draw (0.5,1) node {$K_{1,5}$};
    \draw (6.5,1) node {$\tilde{D}^+_n\ (4\leqslant n\leqslant8)$};
    \draw (13,1) node {$\tilde{E}^+_6$};
    \draw (2,-1.5) node {$\tilde{E}^+_7$};
    \draw (11.5,-1.5) node {$\tilde{E}^+_8$};
    \tikzstyle{every node}=[draw,circle,fill=white,minimum size=2pt,
                            inner sep=0pt]
                            {every label}=[\tiny]
    \draw (2,5) node (0) [label=above:$ $] {};
    \draw (-0.5,4) node (1) [label=below:$ $] {}
        -- ++(0:1cm) node (2) [label=below:$ $] {};
    \draw (4.5,4) node (4) [label=below:$ $] {}
        -- ++(180:1cm) node (3) [label=below:$ $] {};
    \draw [dashed] (2) -- (3);
    \draw (0) -- (1);
    \draw (0) -- (4);
    \draw (4)
        -- ++(90:1cm) node (01) [label=above:$ $] {};

    \draw (6,4.5) node (0a) [label=left:$ $] {}
         -- ++(20:1.5cm) node (0b) [label=above:$ $] {}
         -- ++(-20:1.5cm) node (0c) [label=right:$ $] {}
         -- ++(200:1.5cm) node (0d) [label=below:$ $] {}
         --(0a);
    \draw (0b) -- (0d);

    \draw (10.5,4) node (0e) [label=left:$ $] {}
         -- ++(90:1cm) node (0f) [label=left:$ $] {}
         -- ++(0:1cm) node (0g) [label=right:$ $] {}
         -- ++(270:1cm) node (0h) [label=right:$ $] {}
         --(0e);
    \draw (0e) -- (0g);
    \draw (0f) -- (0h);

    \draw (13.5,4) node (0e) [label=left:$ $] {}
         -- ++(90:1cm) node (0f) [label=left:$ $] {}
         -- ++(0:1cm) node (0g) [label=right:$ $] {}
         -- ++(270:1cm) node (0h) [label=right:$ $] {}
         --(0e);
    \draw (14,4.5) node (00) [label=left:$ $] {};
    \draw (0e) -- (00) -- (0g);

    \draw (-0.5,1.5) node (0i) [label=left:$ $] {}
         -- ++(0:1cm) node (0j) [label=left:$ $] {}
         -- ++(0:1cm) node (0k) [label=right:$ $] {};
    \draw (0j)
         -- ++(45:1cm) node (0l) [label=right:$ $] {};
    \draw (0j)
         -- ++(90:1cm) node (0m) [label=right:$ $] {};
    \draw (0j)
         -- ++(135:1cm) node (0n) [label=right:$ $] {};

    \draw (4,2.5) node (01) [label=above:$ $] {};
    \draw (9,2.5) node (02) [label=above:$ $] {};
    \draw (10,2.5) node (03) [label=above:$ $] {};
    \draw (3,1.5) node (11) [label=below:$ $] {}
        -- ++(0:1cm) node (12) [label=below:$ $] {}
        -- ++(0:1cm) node (13) [label=below:$ $] {};
    \draw (8,1.5) node (14) [label=below:$ $] {}
        -- ++(0:1cm) node (15) [label=below:$ $] {}
        -- ++(0:1cm) node (16) [label=below:$ $] {};
    \draw [dashed] (13) -- (14);
    \draw (01) -- (12);
    \draw (02) -- (15);
    \draw (16) -- (03);

   \draw (11,1.5) node (1b) [label=above:$ $] {}
       -- ++(0:1cm) node (2b) [label=above:$ $] {}
       -- ++(0:1cm) node (3b) [label=above:$ $] {}
       -- ++(0:1cm) node (4b) [label=above:$ $] {}
       -- ++(0:1cm) node (5b) [label=above:$ $] {};
   \draw (3b)
       -- ++(90:0.7cm) node (6b) [label=right:$ $] {}
       -- ++(90:0.7cm) node (7b) [label=right:$ $] {};
   \draw (5b)
       -- ++(90:0.7cm) node (8b) [label=right:$ $] {};

   \draw (-0.5,-1) node (1c) [label=above:$ $] {}
       -- ++(0:1cm) node (2c) [label=above:$ $] {}
       -- ++(0:1cm) node (3c) [label=above:$ $] {}
       -- ++(0:1cm) node (4c) [label=above:$ $] {}
       -- ++(0:1cm) node (5c) [label=above:$ $] {}
       -- ++(0:1cm) node (6c) [label=above:$ $] {}
       -- ++(0:1cm) node (7c) [label=above:$ $] {};
   \draw (4c)
       -- ++(90:1cm) node (8c) [label=right:$ $] {};
   \draw (7c)
       -- ++(90:1cm) node (8b) [label=right:$ $] {};

   \draw (8,-1) node (1d) [label=above:$ $] {}
       -- ++(0:1cm) node (2d) [label=above:$ $] {}
       -- ++(0:1cm) node (3d) [label=above:$ $] {}
       -- ++(0:1cm) node (4d) [label=above:$ $] {}
       -- ++(0:1cm) node (5d) [label=above:$ $] {}
       -- ++(0:1cm) node (6d) [label=above:$ $] {}
       -- ++(0:1cm) node (7d) [label=above:$ $] {}
       -- ++(0:1cm) node (8d) [label=above:$ $] {};
   \draw (3d)
       -- ++(90:1cm) node (9d) [label=right:$ $] {};
   \draw (8d)
       -- ++(90:1cm) node (8b) [label=right:$ $] {};
   \end{tikzpicture}
   \caption{The 18 minimal graphs with spectral radius greater than $2$}
    \label{fig:>2}
\end{figure}
\end{corollary}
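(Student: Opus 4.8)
The plan is to derive this from Smith's Theorem (Theorem~\ref{spectralradius2}) by reducing to a bounded case analysis. Call a graph \emph{minimal} if its spectral radius is greater than $2$ while every proper induced subgraph has spectral radius at most $2$; the task is to show that the minimal graphs are exactly the $18$ graphs of Figure~\ref{fig:>2}, together with the (easy) converse. Throughout I will use three facts, each an immediate consequence of Theorem~\ref{spectralradius2}: (i) a connected graph has spectral radius at most $2$ if and only if it is isomorphic to a subgraph of one of the graphs listed there; (ii) none of $\tilde{A}_n$, $\tilde{D}_n$, $\tilde{E}_6$, $\tilde{E}_7$, $\tilde{E}_8$ is a proper subgraph of another of them (a short check comparing maximum degrees, the numbers of vertices of degree $3$, and the pairwise distances between such vertices); and (iii) by Perron--Frobenius, adding an edge to a connected graph strictly increases its spectral radius. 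First I would reduce to the connected case, since a disconnected minimal graph would have a connected component that is a proper induced subgraph of spectral radius greater than $2$. So let $G$ be minimal and connected. By Theorem~\ref{spectralradius2}, $G$ contains a subgraph $H$ isomorphic to one of $\tilde{A}_n$, $\tilde{D}_n$, $\tilde{E}_6$, $\tilde{E}_7$, $\tilde{E}_8$; I fix one of minimal order.

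The crucial step is to show that $|V(G)| = |V(H)| + 1$ and $G_{V(H)} = H$. Take any $v \in V(G) \setminus V(H)$. Since $G - v$ is a proper induced subgraph of $G$ we have $\rho(G - v) \leqslant 2$, and since $H \subseteq G - v$ we have $\rho(G - v) \geqslant 2$; hence $\rho(G - v) = 2$, so the connected component $C$ of $G - v$ containing $H$ has spectral radius $2$, i.e.\ $C$ is one of the graphs of Theorem~\ref{spectralradius2}. By (ii) and (iii) this forces $C = H$, so $H$ is a whole connected component of $G - v$; consequently the only vertex of $V(G) \setminus V(H)$ that can possibly have a neighbour in $V(H)$ is $v$ itself. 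Were there two distinct such vertices, running this argument with each in turn would show that no vertex outside $V(H)$ has a neighbour in $V(H)$, making $V(H)$ a union of connected components of $G$ and contradicting connectedness; hence $|V(G) \setminus V(H)| \leqslant 1$. It cannot equal $0$: if $V(G) = V(H)$ then $G$ is $H$ plus at least one edge, which creates a cycle on at most $|V(H)|$ vertices, shorter than $\tilde{A}_n$ when $H = \tilde{A}_n$ and on fewer than $|V(H)|$ vertices when $H$ is one of $\tilde{D}_n, \tilde{E}_6, \tilde{E}_7, \tilde{E}_8$ (none of which has a spanning path), so in either case $G$ contains an extended diagram smaller than $H$ — contradicting the choice of $H$. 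Thus $V(H) \subsetneq V(G)$ and $|V(G)| = |V(H)| + 1$; and $G_{V(H)}$, being a proper induced subgraph of spectral radius at most $2$ that is connected and contains the spanning subgraph $H$, must equal $H$ by (iii).

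It then remains to run through the finitely many ways of building $G$ from an extended diagram $H$ by adjoining one vertex $v$ joined to a nonempty subset $T \subseteq V(H)$, keeping exactly those $G$ that are minimal. If $H = \tilde{A}_n$ (an $(n+1)$-cycle): with $|T| = 1$ we obtain $\tilde{A}_n^+$, which is minimal precisely for $2 \leqslant n \leqslant 7$ (for $n \geqslant 8$, deleting the cycle vertex ``antipodal'' to the pendant leaves a tree formed by three paths of lengths $1$, $a$, $b$ with $a, b \geqslant 3$ joined at a common endpoint, which has spectral radius greater than $2$); with $|T| = 2$ we obtain $K_{2,1,1}$ (two adjacent vertices, $n = 2$) or $K_{2,3}$ (the two antipodal vertices of the $4$-cycle, $n = 3$); with $|T| \geqslant 3$ we obtain $K_4$ ($n = 2$); in every remaining configuration a suitable vertex deletion already exhibits a proper induced paw, $K_{2,1,1}$, or $\tilde{A}_m^+$ of spectral radius greater than $2$. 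If $H = \tilde{D}_n$: attaching $v$ at a leaf yields $\tilde{D}_n^+$, minimal precisely for $4 \leqslant n \leqslant 8$, and attaching $v$ as an extra leaf at the centre of $\tilde{D}_4$ yields $K_{1,5}$; every other configuration — in particular every one with $|T| \geqslant 2$, which creates a cycle — contains a proper induced copy of one of the graphs already found. If $H = \tilde{E}_6$, $\tilde{E}_7$, or $\tilde{E}_8$: attaching $v$ at the far end of the longest leg yields $\tilde{E}_6^+$, $\tilde{E}_7^+$, $\tilde{E}_8^+$ respectively, while every other placement of $v$ produces a proper induced subgraph of spectral radius greater than $2$. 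Collecting all of these gives precisely the $18$ graphs of Figure~\ref{fig:>2}. The converse is straightforward: each of the $18$ graphs is connected and properly contains one of $\tilde{A}_n$, $\tilde{D}_n$, $\tilde{E}_6$, $\tilde{E}_7$, $\tilde{E}_8$, so its spectral radius exceeds $2$ by (i)--(iii), and deleting any one of its vertices leaves a subgraph of a graph of Theorem~\ref{spectralradius2}, hence a graph of spectral radius at most $2$.

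The part I expect to be the genuine work is the last paragraph: pinning down exactly where the two infinite families truncate ($n \leqslant 7$ for $\tilde{A}_n^+$ and $n \leqslant 8$ for $\tilde{D}_n^+$), which comes down to knowing precisely which ``spiders'' — trees obtained by joining three paths at a common endpoint — have spectral radius at most $2$, itself read off from Smith's Theorem, and systematically ruling out attachments of the new vertex $v$ to interior vertices of $\tilde{D}_n$, $\tilde{E}_6$, $\tilde{E}_7$, $\tilde{E}_8$. By contrast, the conceptual heart of the argument is the order bound in the second paragraph; everything after it is a finite, essentially mechanical verification that Smith's Theorem renders routine.
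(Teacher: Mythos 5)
Your proof is correct, and it takes the only route the paper itself envisages: the paper states this corollary without any proof, calling it an ``easy consequence'' of Theorem~\ref{spectralradius2}, and your argument supplies exactly the intended derivation --- the reduction showing that a minimal graph must be one of Smith's extended diagrams plus a single extra vertex (via the minimal-order choice of $H$ and Perron--Frobenius), followed by the finite check of attachments. The only soft spots are the compressed assertions in the final case analysis (e.g.\ that every attachment not on the list already yields a proper induced subgraph of spectral radius greater than $2$); these are routine and verifiable, and they do not affect correctness.
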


\section{Forbidden subgraphs}

For $\lambda < 0$, let $\mathcal{F}_{\lambda}$ denote the set of minimal forbidden graphs for the smallest Seidel eigenvalue $\lambda$, that is,
\begin{equation*}
\mathcal{F}_{\lambda}:=\{G\mid\lambda_{\min}(S(G))<\lambda \text{~and any induced proper subgraph $H$ of $G$ satisfies $\lambda_{\min}(S(H))\geqslant\lambda$}\}.
\end{equation*}
Jiang and Polyanskii \cite[Theorem~1]{jiang2017forbidden} showed that the set $\mathcal{F}_{-5}$ is finite.
Since we are talking about Seidel eigenvalues, only the switching classes of such graphs are needed. Now we determine some graphs inside $\mathcal{F}_{-5}$. In order to do so, we define the following.
For a graph $G$, let $G(s,t)$ be the disjoint union of $G$, $s$ isolated vertices, and $t$ copies of $K_2$, where $s,t$ are non-negative integers.
In particular, we write $G(s)$ for $G(s,0)$.

Using the graphs of Corollary \ref{>2}, we obtain the following lemma.

\begin{lemma}\label{forbidden}
Table $\ref{18graphs}$ gives $18$ graphs that belong to $\mathcal{F}_{-5}$.
\begin{table}[h]
\begin{center}
\begin{tabular}{c  c  c  c  c  c  c  c c c}\hline
   $G$ & $\tilde{A}^+_2$ & $\tilde{A}^+_3$  & $\tilde{A}^+_4$ &  $\tilde{A}^+_5$  & $\tilde{A}^+_6$  & $\tilde{A}^+_7$  &  $K_{2,1,1}$ & $K_4$ & $K_{2,3}$\\[{0.5em}]\hline
   $s$ such that $G(s)\in\mathcal{F}_{-5}$ & 41 & 67 & 97 & 130 & 165 & 201 & 11 & 5 & $19$\\[{0.5em}] \hline
    $G$ & $K_{1,5}$ & $\tilde{D}^+_4$ & $\tilde{D}^+_5$ & $\tilde{D}^+_6$ & $\tilde{D}^+_7$ & $\tilde{D}^+_8$ & $\tilde{E}^+_6$& $\tilde{E}^+_7$ & $\tilde{E}^+_8$ \\[{0.5em}]\hline
 $s$ such that $G(s)\in\mathcal{F}_{-5}$ & $41$ & 137 & 225 & 327 & 439 & 557 & 465 & 966 & 2477\\[{0.5em}] \hline
\end{tabular}
\end{center}
\caption{$18$ minimal forbidden graphs for $\lambda=-5$.}\label{18graphs}
\end{table}
\end{lemma}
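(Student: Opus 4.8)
The plan is to reduce, for each of the eighteen graphs $G$ in Figure~\ref{fig:>2}, the claim ``$G(s)\in\mathcal{F}_{-5}$'' to a rank‑one perturbation computation governed by the single scalar $c(G):=\mathbf{j}^{\top}(S(G)+5\mathbf{I})^{-1}\mathbf{j}$, which is well defined once $\lambda_{\min}(S(G))>-5$. The key observation I would prove first is: for any graph $H$ with $\lambda_{\min}(S(H))>-5$ (so $S(H)+5\mathbf{I}$ is positive definite) and any $m\ge 0$, writing $S(H\,\dot\cup\,mK_1)+5\mathbf{I}$ in block form and taking the Schur complement of the positive definite block $\mathbf{J}_m+4\mathbf{I}_m$ — using $(\mathbf{J}_m+4\mathbf{I}_m)^{-1}=\tfrac14\mathbf{I}_m-\tfrac1{4(m+4)}\mathbf{J}_m$ — yields exactly $S(H)+5\mathbf{I}-\tfrac{m}{m+4}\,\mathbf{j}\mathbf{j}^{\top}$. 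By the inertia additivity of the Schur complement, $S(H\,\dot\cup\,mK_1)$ has an eigenvalue $<-5$ (respectively $=-5$) precisely when this downward rank‑one perturbation of the positive definite matrix $S(H)+5\mathbf{I}$ fails to be positive definite (respectively is positive semidefinite and singular), that is, precisely when $\tfrac{m}{m+4}c(H)>1$ (respectively $=1$). Hence
\[
\lambda_{\min}\!\bigl(S(H\,\dot\cup\,mK_1)\bigr)<-5\iff m\bigl(c(H)-1\bigr)>4,\qquad \lambda_{\min}\!\bigl(S(H\,\dot\cup\,mK_1)\bigr)=-5\iff m\bigl(c(H)-1\bigr)=4 .
\]

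Next I would record the complementary fact needed for minimality: if every component of a graph $H$ has spectral radius at most $2$ and at most one has spectral radius exactly $2$, then $\lambda_{\min}(S(H))>-5$ and $c(H)\le 1$, so by the displayed equivalence $\lambda_{\min}(S(H\,\dot\cup\,mK_1))>-5$ for every $m$. For the first part, $S(H)=\mathbf{J}-\mathbf{I}-2A(H)$ could have eigenvalue $-5$ only via an eigenvector of $A(H)$ orthogonal to $\mathbf{j}$ with eigenvalue $2$; but any eigenvalue‑$2$ eigenvector is supported on the (at most one) component of spectral radius $2$, where it is a positive Perron vector, hence not orthogonal to $\mathbf{j}$. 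For $c(H)\le 1$, set $\mathbf{x}:=(S(H)+5\mathbf{I})^{-1}\mathbf{j}$ and $\sigma:=\mathbf{j}^{\top}\mathbf{x}=c(H)$; then $(\mathbf{J}+4\mathbf{I}-2A(H))\mathbf{x}=\mathbf{j}$ rearranges to $2(2\mathbf{I}-A(H))\mathbf{x}=(1-\sigma)\mathbf{j}$, and since $2\mathbf{I}-A(H)$ is positive semidefinite, either it is invertible and $\sigma=\tfrac{d}{d+2}<1$ with $d:=\mathbf{j}^{\top}(2\mathbf{I}-A(H))^{-1}\mathbf{j}>0$, or its kernel meets the positive orthant, so $\mathbf{j}$ is not in its range and $1-\sigma=0$.

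Finally, for each of the eighteen graphs $G$ of Figure~\ref{fig:>2} a direct computation — reduced by the automorphisms of $G$ to a small linear system for $(S(G)+5\mathbf{I})^{-1}\mathbf{j}$ — shows that $\lambda_{\min}(S(G))>-5$, that $c(G)>1$, and that $s_0(G):=\tfrac{4}{c(G)-1}$ is a positive integer one less than the entry $s$ of Table~\ref{18graphs} (for instance $c(K_4)=2$, $c(K_{2,1,1})=\tfrac75$, $c(K_{2,3})=\tfrac{11}{9}$, $c(K_{1,5})=c(\tilde{A}^+_2)=\tfrac{11}{10}$, giving $s_0=4,10,18,40$ and $s=5,11,19,41$). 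By the first paragraph, $\lambda_{\min}(S(G(s_0(G))))=-5$ while $\lambda_{\min}(S(G(s)))<-5$ for $s=s_0(G)+1$, so $G(s)$ has smallest Seidel eigenvalue below $-5$. Its maximal proper induced subgraphs are $G(s-1)=G(s_0(G))$, whose smallest Seidel eigenvalue is exactly $-5$, and the graphs $(G-v)\,\dot\cup\,sK_1$ for $v\in V(G)$; since $G$ is a minimal graph of spectral radius $>2$, every component of $G-v$ has spectral radius $\le 2$, and a short inspection of the eighteen graphs shows that no $G-v$ has two components of spectral radius $2$, so the second paragraph gives $\lambda_{\min}(S((G-v)\,\dot\cup\,sK_1))>-5$. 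By interlacing (Theorem~\ref{interlacing}) every proper induced subgraph of $G(s)$ then has smallest Seidel eigenvalue $\ge -5$ (in particular no $G(s')$ with $s'\le s_0(G)$ is forbidden), whence $G(s)\in\mathcal{F}_{-5}$. The heart of the argument is the first paragraph: once one sees that adjoining $m$ isolated vertices amounts, at the level of the Schur complement, to subtracting $\tfrac{m}{m+4}\mathbf{j}\mathbf{j}^{\top}$ from $S(G)+5\mathbf{I}$, the forbidden direction is automatic, and the genuine work is the minimality direction — the second paragraph together with the finite verification that each $G-v$ has at most one Smith‑graph component.
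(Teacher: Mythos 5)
Your proof is correct, and it reaches the same two milestones as the paper's (very terse) argument: determine the least $s$ with $\lambda_{\min}(S(G(s)))<-5$, then get minimality from the fact that each $G$ of Corollary~\ref{>2} is a \emph{minimal} graph of spectral radius greater than $2$. Where you genuinely differ is in the computational engine. The paper computes the critical $s$ graph by graph (via equitable partitions and quotient-matrix determinants, in the style of Lemma~\ref{36}), whereas your Schur-complement identity --- that adjoining $m$ isolated vertices replaces $S(G)+5\mathbf{I}$ by $S(G)+5\mathbf{I}-\tfrac{m}{m+4}\mathbf{j}\mathbf{j}^{\top}$ --- packages all eighteen cases into the single scalar $c(G)=\mathbf{j}^{\top}(S(G)+5\mathbf{I})^{-1}\mathbf{j}$ and the clean criterion $m(c(G)-1)>4$. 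That is a uniform and reusable reduction the paper does not state, and your sample values ($c(K_4)=2$, $c(K_{2,1,1})=\tfrac75$, $c(K_{2,3})=\tfrac{11}{9}$, $c(\tilde A_2^+)=\tfrac{11}{10}$) do reproduce the table entries $5,11,19,41$.

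One part of your argument is more laborious than it needs to be, and carries a small avoidable risk. For the maximal subgraphs $(G-v)\,\dot\cup\,sK_1$ you prove the \emph{strict} inequality $\lambda_{\min}>-5$, which forces you to verify that no $G-v$ has two components of spectral radius exactly $2$ (so that $c(G-v)\leqslant1$ applies). Only $\lambda_{\min}\geqslant-5$ is needed for membership in $\mathcal{F}_{-5}$, and that follows in one line from Lemma~\ref{SandA}(i): since $G$ is minimal with $\rho(G)>2$, every component of $(G-v)\,\dot\cup\,sK_1$ has spectral radius at most $2$, hence $\lambda_{\min}(S((G-v)\,\dot\cup\,sK_1))\geqslant-2\cdot2-1=-5$ regardless of how many components attain spectral radius $2$. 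This is exactly the shortcut the paper's proof invokes; adopting it would let you delete both the $c(H)\leqslant1$ computation and the case-by-case inspection of the $G-v$.
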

\begin{proof}
For each of the graphs $G$ of Corollary \ref{>2}, we determine the smallest integer $s$ that satisfies $\lambda_{\min}(S(G(s))) <-5$. That the obtained graphs $G(s)$ belong to $\mathcal{F}_{-5}$, follows from the fact that any such $G$ is a minimal graph with spectral radius larger than $2$ (by Corollary \ref{>2}) and Lemma \ref{SandA}.
\end{proof}

\begin{remark}
This lemma shows that there exists a graph of order $2487$ inside $\mathcal{F}_{-5}$. We do not know whether this graph has the largest order inside $\mathcal{F}_{-5}$. Nevertheless this suggests that it may be difficult to find all graphs in $\mathcal{F}_{-5}$. This explains the lower bound of Neumaier's claim in the introduction.
\end{remark}

The following lemma is of crucial importance for this paper.
\begin{lemma}\label{36}
Let $r\geqslant 2,s\geqslant0,t\geqslant0$ be integers such that $s+t\geqslant1$. Then, the following hold.
\begin{enumerate}[(i)]
  \item The smallest eigenvalue of $S(K_{1,r}(s,t))$ satisfies $\lambda_{\min}(S(K_{1,r}(s,t)))\geqslant-5$ if and only if $(r-4)(s+4t-4)\leqslant36$;
  \item Given a graph $G$ with $r$ vertices, let $C(G)$ be the cone of $G$, that is, adding a new vertex to $G$ and joining it with all vertices of $G$. Then $\lambda_{\min}(S(C(G)(s,t)))\leqslant\lambda_{\min}(S(K_{1,r}(s,t)))$.
\end{enumerate}
\end{lemma}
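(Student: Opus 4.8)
The plan is to pin down the Seidel spectrum of $K_{1,r}(s,t)$ completely by means of an equitable partition, after which (i) reduces to the sign of a single determinant and (ii) reduces to a Rayleigh‑quotient estimate. For (i), consider the partition $\pi$ of $K_{1,r}(s,t)$ into the four cells consisting of the centre of the star, the $r$ leaves, the $s$ isolated vertices, and the $2t$ vertices lying on the copies of $K_2$ (if $s=0$ or $t=0$ the corresponding cell is simply omitted and everything below still works). Every block of $S:=S(K_{1,r}(s,t))$ has constant row sums, so $\pi$ is equitable, and by Corollary~\ref{S-quotient} the eigenvalues of the $4\times 4$ quotient matrix $Q$ are eigenvalues of $S$. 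The remaining $n-4$ eigenvalues of $S$ come from vectors supported on a single cell and summing to zero there; a direct computation identifies them as $-1$ (from the leaves and from the isolated vertices), together with $1$ and $-3$ (from the copies of $K_2$, using that the perfect matching on $2t$ vertices has eigenvalues $\pm 1$). All of these are at least $-5$, so $\lambda_{\min}(S)\geqslant -5$ if and only if every eigenvalue of $Q$ is at least $-5$.

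To analyse $Q$, I would symmetrise it: conjugating by the diagonal matrix of the square roots of the cell sizes yields a symmetric matrix $\widehat Q$ with the same spectrum, and a short calculation gives $\widehat Q=\widehat D+uu^{\top}$, where $u=(1,\sqrt r,\sqrt s,\sqrt{2t})^{\top}$ and $\widehat D=(\begin{smallmatrix}-1&-2\sqrt r\\-2\sqrt r&-1\end{smallmatrix})\oplus(-1)\oplus(-3)$, which depends only on $r$ and has eigenvalues $-1\pm 2\sqrt r$, $-1$ and $-3$. Since $uu^{\top}$ is positive semidefinite, $\eta_i(\widehat Q)\geqslant\eta_i(\widehat D)$ for every $i$; because every eigenvalue of $\widehat D$ other than the smallest is one of $-1+2\sqrt r,\,-1,\,-3$, it follows that every eigenvalue of $\widehat Q$ other than $\eta_{\min}(\widehat Q)$ is at least $-3$, in particular larger than $-5$. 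Hence $\det(Q+5\mathbf{I})=\det(\widehat Q+5\mathbf{I})=\prod_i(\eta_i(\widehat Q)+5)$ has the same sign as $\eta_{\min}(\widehat Q)+5$, so $\lambda_{\min}(S)\geqslant -5$ if and only if $\det(Q+5\mathbf{I})\geqslant 0$. It then remains to carry out the routine row reduction $\det(Q+5\mathbf{I})=8\bigl(36-(r-4)(s+4t-4)\bigr)$ (in the degenerate cases $s=0$ or $t=0$ one obtains a positive constant times $36-(r-4)(s+4t-4)$, which has the same sign), and (i) follows.

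For (ii), I would first note that the analysis above actually yields $\lambda:=\lambda_{\min}(S)=\eta_{\min}(\widehat Q)$: a short argument using the two bottom eigenvectors of $\widehat D$ shows $\eta_{\min}(\widehat Q)\leqslant -3$ when $t\geqslant 1$ and $\leqslant -1$ when $t=0$, so $\eta_{\min}(\widehat Q)$ never exceeds a non‑quotient eigenvalue of $S$. In particular $\lambda$ is an eigenvalue of $Q$, so $S$ has a $\lambda$‑eigenvector $x$ that is constant on each cell of $\pi$; write $x\equiv b$ on the set $W$ of the $r$ leaves. Now $K_{1,r}(s,t)=C(\overline{K_r})(s,t)$, and $C(G)(s,t)$ differs from it only in the edges present inside $W$: precisely, $S(C(G)(s,t))=S(K_{1,r}(s,t))-2\widetilde A$, where $\widetilde A$ equals $A(G)$ in the block indexed by $W$ and is $0$ elsewhere. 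Therefore
\[
x^{\top}S(C(G)(s,t))\,x=\lambda\,x^{\top}x-2\,x^{\top}\widetilde A\,x=\lambda\,x^{\top}x-4b^{2}\varepsilon_G\leqslant\lambda\,x^{\top}x,
\]
and the Rayleigh principle gives $\lambda_{\min}(S(C(G)(s,t)))\leqslant\lambda=\lambda_{\min}(S(K_{1,r}(s,t)))$, which is (ii).

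The step I expect to be the main obstacle is the spectral bookkeeping in (i): extracting all the non‑quotient eigenvalues of $S$ with their correct multiplicities, handling the degenerate values of $s$ and $t$, and spotting that $\widehat Q$ is a rank‑one positive semidefinite perturbation $\widehat D+uu^{\top}$ of a matrix depending only on $r$. This last observation is the heart of the matter, since it confines all but the smallest eigenvalue of $Q$ to the region above $-3$ and thereby makes the single inequality $\det(Q+5\mathbf{I})\geqslant 0$ decisive; once it is in hand, both the determinant evaluation in (i) and the eigenvector estimate in (ii) are straightforward.
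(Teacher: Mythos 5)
Your proof is correct, and while it is built around the same central object as the paper --- the equitable partition of $K_{1,r}(s,t)$ into centre, leaves, isolated vertices and $K_2$-vertices, its $4\times4$ Seidel quotient matrix $Q$, and the evaluation $\det(Q+5\mathbf{I})=8\bigl(36-(r-4)(s+4t-4)\bigr)$ --- the way you justify that this single determinant is decisive is genuinely different. The paper localizes the smallest eigenvalue by deleting the centre of the star: the remaining graph $\overline{K_{r+s}}\dot\cup tK_2$ has smallest Seidel eigenvalue $-3$, so by Cauchy interlacing $S$ has at most one eigenvalue below $-3$, and since $\det(Q+3\mathbf{I})=-16t(r-1)<0$ forces $\lambda_{\min}(Q)<-3$, one gets $\lambda_{\min}(S)=\lambda_{\min}(Q)$ in two lines. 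You instead symmetrise $Q$ and exhibit it as a positive semidefinite rank-one perturbation $\widehat D+uu^{\top}$ of a fixed matrix with spectrum $\{-1\pm2\sqrt r,-1,-3\}$, so Weyl's inequality pins all but the bottom eigenvalue of $Q$ above $-3$; this is more work (you must also account for the non-quotient eigenvalues $-1$, $1$, $-3$ explicitly, and argue separately via the two bottom eigenvectors of $\widehat D$ that $\eta_{\min}(\widehat Q)\leqslant-3$, resp.\ $\leqslant-1$, to recover $\lambda_{\min}(S)=\lambda_{\min}(Q)$), but it is self-contained and makes the structure of $Q$ transparent, whereas the paper's route leans on the fortunate fact that the one-vertex-deleted subgraph has smallest eigenvalue exactly $-3$. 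For (ii) your argument is cleaner than the paper's: writing $S(C(G)(s,t))=S(K_{1,r}(s,t))-2\widetilde A$ and computing $x^{\top}\widetilde A x=2\varepsilon_G b^2\geqslant0$ for the lifted quotient eigenvector settles the claim by the Rayleigh principle without needing the sign of any coordinate, whereas the paper first proves the leaf-coordinate $a$ is positive and then argues entrywise. Both routes are sound; yours trades brevity for a decomposition that explains, rather than merely verifies, why only one eigenvalue of $Q$ can fall below $-5$.
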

\begin{proof}
$(i)$
First, we consider the case when $r\geqslant2$, $s\geqslant1$ and $t\geqslant1$. Let $v$ be the vertex of valency $r$ in $K_{1,r}$, $V_1=V(K_{1,r}-{v})$, $V_2=V(\overline{K}_s)$ and $V_3=V(tK_2)$. Consider a partition $\pi=\{\{v\}, V_1, V_2, V_3\}$ of $K_{1,r}(s,t)$. The partition $\pi$ is equitable with quotient matrix $Q$ with respect to $S(K_{1,r}(s,t))$:
\begin{gather*}
Q=\begin{pmatrix}
0 & -r & s & 2t \\ -1 & r-1 & s & 2t \\ 1 & r & s-1 & 2t \\ 1 & r & s & 2t-3
\end{pmatrix}.
\end{gather*}

Note that $\det(Q+3\mathbf{I})=-16t(r-1)$. As $r\geqslant2$ and $t\geqslant1$, we see that $\lambda_{\min}(Q)<-3$. By Theorem \ref{interlacing}, we observe that $S(K_{1,r}(s,t))$ has at most one eigenvalue at most $-3$, as $\lambda_{\min}(S(\overline{K_{r+s}}$ $\dot\cup$ $tK_2))=-3$. This implies that $\lambda_{\min}(Q)=\lambda_{\min}(S(K_{1,r}(s,t)))$, by Lemma \ref{quotient}. Next, we find that
\begin{equation*}
\det(Q+5\mathbf{I})=-8((r-4)(s+4t-4)-36).
\end{equation*}
This shows that $(i)$ is correct, if $r\geqslant2$, $s\geqslant1$ and $t\geqslant1$.

If $s=0$ or $t=0$, then with a similar argument we see that $(i)$ is true.

$(ii)$ Fix $a,b,c\in\mathbb{R}$ such that $Q\begin{pmatrix}1\\ a \\ b \\ c\end{pmatrix}=\lambda_{\min}(Q)\begin{pmatrix}1\\ a \\ b \\c\end{pmatrix}$.

We find

\begin{align*}
  \lambda_{\min}(Q) & =-ra+sb+2tc \\
  a\lambda_{\min}(Q) & =-1+(r-1)a+sb+2tc.
\end{align*}
If $a\leqslant0$, then
\begin{equation*}
a\lambda_{\min}(Q)+1\leqslant sb+2tc\leqslant \lambda_{\min}(Q).
\end{equation*}
This gives a contradiction, as $\lambda_{\min}(Q)<0$ and $a\lambda_{\min}(Q)+1>0$. It follows that $a>0$.

Let $\{w\}=V(C(G))-V(G)$, $W_1=V(G)$, $W_2=V(\overline{K}_s)$ and $W_3=V(tK_2)$. Let $\mathbf{w}$ be a vector in $\mathbb{R}^{V(C(G)(s,t))}$ such that
\begin{equation}\nonumber
\mathbf{w}_x=\left\{
 \begin{array}{ll}
1,& \text{if $x=w$},\\
a,& \text{if $x\in W_1$},\\
b,& \text{if $x\in W_2$},\\
c,& \text{if $x\in W_3$}.
 \end{array}
 \right.
\end{equation}

For any vertex $x$ in $V(C(G)(s,t))$, note that $(S(C(G)(s,t))\mathbf{w})_x\leqslant(S(K_{1,r}(s,t))\mathbf{w})_x=(\lambda_{\min}(Q)\mathbf{w})_x$. This implies that $\lambda_{\min}(S(C(G)(s,t)))\leqslant\lambda_{\min}(Q)$. It shows $(ii)$.
\end{proof}

Analogous computations show that the following graphs also belong to $\mathcal{F}_{-5}$. We omit the details here.

\begin{lemma}\label{forbidden2}
The graphs $B_1(14)$ and $B_2(9)$ belong to $\mathcal{F}_{-5}$, where the graphs $B_1$ and $B_2$ are listed in Figure~$\ref{figB}$.
\begin{figure}[ht]
\centering
\begin{tikzpicture}
    \draw (1,-0.5) node {$B_1$};
    \draw (6,-0.5) node {$B_2$};
    \tikzstyle{every node}=[draw,circle,fill=white,minimum size=2pt,
                            inner sep=0pt]
                            {every label}=[\tiny]

    \draw (0,0) node (11) [label=below:$ $] {}
        -- ++(90:1cm) node (12) [label=below:$ $] {}
        -- ++(-30:1cm) node (13) [label=below:$ $] {}
        -- ++(30:1cm) node (14) [label=below:$ $] {}
        -- ++(-90:1cm) node (15) [label=below:$ $] {};
    \draw  (13) -- (15);
    \draw  (11) -- (13);

    \draw (5,1.5) node (21) [label=below:$ $] {}
        -- ++(-30:1cm) node (22) [label=below:$ $] {}
        -- ++(210:1cm) node (23) [label=below:$ $] {}
        -- ++(-30:1cm) node (24) [label=below:$ $] {}
        -- ++(30:1cm) node (25) [label=below:$ $] {}
        --(22);
    \draw (22)
        -- ++(30:1cm) node (26) [label=below:$ $] {};
    \draw (22) -- (24);
\end{tikzpicture}
\caption{Two more forbidden graphs in $\mathcal{F}_{-5}$}
\label{figB}
\end{figure}
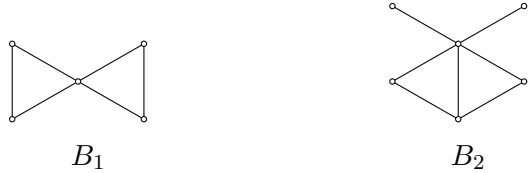
\end{lemma}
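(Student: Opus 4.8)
\medskip
\noindent\textbf{Proof proposal.} Reading off Figure~\ref{figB}, both graphs are cones over small graphs: $B_1=K_1\vee 2K_2$ is the bowtie, and $B_2=C(P_3\,\dot\cup\,2K_1)=K_1\vee(P_3\,\dot\cup\,2K_1)$. The plan is to imitate the proofs of Lemmas~\ref{forbidden} and \ref{36}: for each of $B_1(14)$ and $B_2(9)$ take the orbit partition $\pi$ of the automorphism group --- the apex as a singleton class, the orbits inside the inner graph, and all added isolated vertices as one class --- which is automatically equitable, form the Seidel quotient matrix $Q$ (as in Corollary~\ref{S-quotient}), and reduce everything to eigenvalue computations for explicit small integer matrices. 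For $B_1(s)$ the partition has three classes (the apex, the $4$ ``wing'' vertices, the $s$ isolated vertices), giving a $3\times 3$ matrix $Q=Q^{B_1}_s$; for $B_2(s)$ it has five classes $\{w\},\{y\},\{x,z\},\{p,q\},\{\text{isolated}\}$ (writing $P_3=x\!-\!y\!-\!z$ for the inner path, $p,q$ for its two pendant companions, and $w$ for the apex), giving a $5\times 5$ matrix $Q=Q^{B_2}_s$.

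To show $\lambda_{\min}(S(B_1(14)))<-5$ and $\lambda_{\min}(S(B_2(9)))<-5$ I would first determine the full Seidel spectrum by the orthogonal splitting $\mathbb{R}^{V}=U_\pi\oplus U_\pi^{\perp}$: the class-indicator vectors span $U_\pi$ and contribute exactly the eigenvalues of $Q$, while $U_\pi^{\perp}$ is spanned by vectors summing to zero on a single orbit. A short direct check shows every such vector is a Seidel eigenvector --- the $s-1$ zero-sum vectors on the isolated class, and (for $B_2$) $\mathbf{e}_x-\mathbf{e}_z$ and $\mathbf{e}_p-\mathbf{e}_q$, all with eigenvalue $-1$; the zero-sum vectors on the $4$ wing-vertices of $B_1$ realize the Seidel eigenvalues $-3,1,1$ of $S(2K_2)$. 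Thus $\lambda_{\min}(S(B_1(s)))=\min(\lambda_{\min}(Q^{B_1}_s),-3)$ and $\lambda_{\min}(S(B_2(s)))=\min(\lambda_{\min}(Q^{B_2}_s),-1)$, and everything reduces to $\lambda_{\min}(Q)$. One computes $\det(Q^{B_1}_{14}+5\mathbf{I})=-8$ and $\det(Q^{B_2}_{9}+5\mathbf{I})=-128$, both negative. That the corresponding smallest quotient eigenvalue is the only one below $-5$ can be seen either by inspecting the characteristic polynomial of $Q$, or as follows: deleting one isolated-vertex row and column from $S(B_1(14))$ (resp.\ $S(B_2(9))$) gives a principal submatrix equal to $S(B_1(13))$ (resp.\ $S(B_2(8))$), which is positive semidefinite after adding $5\mathbf{I}$ by the minimality statement below, so by Cauchy interlacing $S(B_1(14))+5\mathbf{I}$ (resp.\ $S(B_2(9))+5\mathbf{I}$) has at most one negative eigenvalue, hence exactly one since its determinant is negative. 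Either way $\lambda_{\min}(Q)\in(-6,-5)$, so $\lambda_{\min}(S(B_1(14)))<-5$ and $\lambda_{\min}(S(B_2(9)))<-5$.

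For minimality it suffices, by Lemma~\ref{SandA}(ii), to check the vertex-deleted induced subgraphs, of which there are finitely many up to symmetry. Deleting an isolated vertex gives $B_1(13)$, resp.\ $B_2(8)$; the same quotient computation with the last parameter lowered by one yields $\det(Q^{B_1}_{13}+5\mathbf{I})=0$ with the other two quotient eigenvalues positive, so $\lambda_{\min}(S(B_1(13)))=-5$, and for $B_2(8)$ one gets $\lambda_{\min}(S(B_2(8)))\ge -5$ in the same way. Deleting a vertex of the inner graph produces a cone over a smaller graph or a disjoint union, each handled by a smaller instance of the same computation or by an earlier result: for $B_1$, removing the apex gives $2K_2\,\dot\cup\,14K_1$ with $\lambda_{\min}(S)=-3$, and removing a wing vertex gives $\tilde{A}^+_2(14)$ (the paw with $14$ isolated vertices), for which $\lambda_{\min}(S)\ge -5$ by Lemma~\ref{forbidden} since $14<41$; for $B_2$, removing $y$ gives $K_{1,4}(9)$ and removing the apex $w$ gives $P_3\,\dot\cup\,11K_1=K_{1,2}(11)$, both with $\lambda_{\min}(S)\ge -5$ by Lemma~\ref{36}(i), while removing an endpoint of the $P_3$ gives $C(K_2\,\dot\cup\,2K_1)(9)$ and removing a pendant gives $C(P_3\,\dot\cup\,K_1)(9)$, each dispatched by a direct $4\times 4$, resp.\ $5\times 5$, quotient eigenvalue check. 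Together with interlacing this shows every proper induced subgraph has smallest Seidel eigenvalue at least $-5$, whence $B_1(14),B_2(9)\in\mathcal{F}_{-5}$.

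The argument has no single hard step; its content is a finite, somewhat tedious bookkeeping of eigenvalue computations for explicit small matrices, in the spirit of Lemma~\ref{forbidden}. The two points requiring care are (a) confirming that $\lambda_{\min}(Q)$ is genuinely the smallest Seidel eigenvalue of the whole graph and is not overtaken by a ``local'' eigenvalue --- which is exactly why one wants the explicit $U_\pi\oplus U_\pi^{\perp}$ decomposition, whose extra eigenvalues $-1,-3$ sit comfortably above $-5$ --- and (b) making sure the list of vertex-deleted subgraphs is exhaustive up to automorphism. Since the numbers $\det(Q+5\mathbf{I})$ are small integers whose sign must come out exactly right, the arithmetic, while routine, should be done carefully (or verified by computer, which is presumably what the authors did when they wrote that they omit the details).
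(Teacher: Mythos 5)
Your proposal is correct and follows exactly the route the paper intends: the authors give no details for Lemma~\ref{forbidden2}, saying only that ``analogous computations'' to those of Lemma~\ref{36} (equitable partitions and Seidel quotient matrices) establish it, and your equitable-partition setup, the determinant values $\det(Q^{B_1}_{14}+5\mathbf{I})=-8$, $\det(Q^{B_1}_{13}+5\mathbf{I})=0$, $\det(Q^{B_2}_{9}+5\mathbf{I})=-128$, and the orbit-by-orbit check of vertex-deleted subgraphs all check out. The only soft spots are bookkeeping ones you already flag yourself (e.g.\ a positive $\det(Q+5\mathbf{I})$ alone does not give positive semidefiniteness, so the remaining principal minors or the interlacing argument must indeed be carried through), and they do not affect the validity of the approach.
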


\section{The independence number is at least \texorpdfstring{$49$}{49}}\label{sec:49}
Let $S$ be a Seidel matrix with $\lambda_{\min}(S)=-5$ of order $n\geqslant277$. In this section, we show that, if the switching class of $S$ has independence number at least $49$, then {\rm rk}$(S+5\mathbf{I})\geqslant\frac{2n}{3}+1$. This shows that in this case Theorem \ref{maintheorem} is true.

We start with the small spectral radius.

\begin{proposition}\label{LS2}
Let $S$ be a Seidel matrix with $\lambda_{\min}(S)=-5$ of order $n$. If the switching class of $S$ contains a graph $G$ with spectral radius $\rho(G)\leqslant2$, then {\rm rk}$(S+5\mathbf{I})\geqslant\frac {2n}{3}+1$.
\end{proposition}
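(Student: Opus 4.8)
The plan is to pass to the corresponding graph $G$ and compute the kernel of $S(G)+5\mathbf I$ exactly, rather than merely estimate the multiplicity of $-5$. Since $S$ is switching equivalent to $S(G)$, the two matrices are similar, so $\mathrm{rk}(S+5\mathbf I)=\mathrm{rk}(S(G)+5\mathbf I)$ and it suffices to bound the latter. First note that the hypotheses force $\rho(G)=2$: Lemma~\ref{SandA}(i) gives $-5=\lambda_{\min}(S(G))\geqslant-2\rho(G)-1$, so $\rho(G)\geqslant2$, which together with $\rho(G)\leqslant2$ yields equality. The crucial observation is then the identity
\[
S(G)+5\mathbf I=\mathbf J-\mathbf I-2A(G)+5\mathbf I=\mathbf J+2\bigl(2\mathbf I-A(G)\bigr),
\]
which exhibits $S(G)+5\mathbf I$ as a sum of two positive semidefinite matrices ($\mathbf J\succeq0$ always, and $2\mathbf I-A(G)\succeq0$ since $\rho(G)=2$). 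Because $\mathbf x^{\top}(M+N)\mathbf x=0$ forces $M\mathbf x=N\mathbf x=\mathbf 0$ when $M,N\succeq0$, we obtain $\ker(S(G)+5\mathbf I)=\ker\mathbf J\cap\ker(2\mathbf I-A(G))=\mathbf j^{\perp}\cap V_2$, where $V_2$ denotes the eigenspace of $A(G)$ for the eigenvalue $2$.

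Next I would compute $\dim(\mathbf j^{\perp}\cap V_2)$. Let $G_1,\dots,G_k$ be the connected components of $G$ with $\rho(G_i)=2$; since $\rho(G)=2$ we have $k\geqslant1$. A component with spectral radius less than $2$ contributes nothing to $V_2$, while each $G_i$ contributes the one-dimensional span of its Perron eigenvector $\mathbf p_i$ (extended by zeros elsewhere) by Perron--Frobenius, so $\dim V_2=k$ with basis $\mathbf p_1,\dots,\mathbf p_k$. A combination $\sum_i c_i\mathbf p_i$ lies in $\mathbf j^{\perp}$ exactly when $\sum_i c_i\langle\mathbf p_i,\mathbf j\rangle=0$; as each $\langle\mathbf p_i,\mathbf j\rangle$ is a sum of strictly positive numbers, this is a single nontrivial linear condition, whence $\dim(\mathbf j^{\perp}\cap V_2)=k-1$.

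Finally, every connected graph with spectral radius $2$ has at least three vertices (the smallest is $\tilde A_2=C_3$; at any rate no graph on at most two vertices has spectral radius $2$), so $n\geqslant3k$. Hence $\dim\ker(S(G)+5\mathbf I)=k-1\leqslant\tfrac n3-1$, and therefore $\mathrm{rk}(S+5\mathbf I)=n-(k-1)\geqslant\tfrac{2n}{3}+1$. The only real obstacle is spotting the semidefinite splitting $S(G)+5\mathbf I=\mathbf J+2(2\mathbf I-A(G))$: without it, bounding the multiplicity of $-5$ by $\dim V_2$ together with one possible extra eigenvector not orthogonal to $\mathbf j$ would give only $\mathrm{rk}\geqslant\tfrac{2n}{3}-1$, which is two short. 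A quick check on $G=C_3\,\dot\cup\,C_3$ (here $n=6$, $k=2$, and $\mathrm{rk}(S+5\mathbf I)=5$) confirms the bound is sharp, so no slack is being lost.
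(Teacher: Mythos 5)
Your proof is correct, and its skeleton is the same as the paper's: the multiplicity of $-5$ for $S(G)$ equals $k-1$, where $k$ is the number of connected components of $G$ with spectral radius $2$, and since each such component has at least $3$ vertices one gets $k\leqslant n/3$ and hence ${\rm rk}(S+5\mathbf I)\geqslant \frac{2n}{3}+1$. The difference is in how the multiplicity claim is justified: the paper simply asserts it (implicitly relying on the Perron--Frobenius considerations around Lemma~\ref{onelarge}), whereas you derive it cleanly from the positive semidefinite splitting $S(G)+5\mathbf I=\mathbf J+2\bigl(2\mathbf I-A(G)\bigr)$, which pins down $\ker(S(G)+5\mathbf I)$ exactly as $\mathbf j^{\perp}\cap V_2$ and makes the count $k-1$ immediate. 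This is a genuinely self-contained and arguably sharper justification of the one step the paper glosses over; the only cosmetic remark is that the paper's separate case $\rho(G)<2$ is in fact vacuous under the hypothesis $\lambda_{\min}(S)=-5$, as your opening observation shows.
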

\begin{proof}
If $\rho(G)<2$, then $S(H')+5\mathbf{I}$ has full rank, by Lemma \ref{SandA}. Next we may assume $\rho(G)=2$. Clearly, $\lambda_{\min}(S)\geqslant-2\rho(G)-1=-5$, by Lemma \ref{SandA}. The multiplicity of $-5$ of $S(G)$ is one less than the number of connected components of $G$ with spectral radius $2$. As each connected component with spectral radius $2$ has at least $3$ vertices, it follows that {\rm rk}$(S+5\mathbf{I})\geqslant\frac {2n}{3}+1$. This shows the proposition.
\end{proof}

The next lemma gives a lower bound for the rank of $S+5\mathbf{I}$, where $S$ is a Seidel matrix.

\begin{lemma}\label{BoundedrkbyH}
Let $S$ be a Seidel matrix with $\lambda_{\min}(S)=-5$ of order $n$. Assume the switching class of $S$ contains a graph $G$ with $\rho(G)>2$. Let $d_{\max}$ be the maximum valency of $G$. Let $H$ be an induced subgraph of $G$ with $\rho(H)>2$. Let $n_H$ (resp.\ $\varepsilon_H$) be the order (resp.\ size) of $H$. Let $\alpha(H)$ be the independence number of $H$. Then ${\rm rk} (S+5\mathbf{I})\geqslant n-n_H(1+d_{\max})+2\varepsilon_H+\alpha(H)$.
\end{lemma}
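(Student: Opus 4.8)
The plan is to exhibit an induced subgraph $G'$ of $G$ whose complement (inside $G$) contributes many connected components to a graph with spectral radius at most $2$, so that Smith's theorem forces a large multiplicity gap, and then turn that into a rank bound. Concretely, start from the induced subgraph $H$ with $\rho(H)>2$, and let $N(H)$ be its neighbourhood and $R(H)$ the rest (vertices of $G$ neither in $H$ nor adjacent to $H$), as defined in the Preliminaries. Since every vertex of $H$ has valency at most $d_{\max}$ in $G$, we have $|V(N(H))|\leqslant n_H d_{\max}-2\varepsilon_H$ (each edge inside $H$ uses up two of the at most $n_Hd_{\max}$ incidences leaving $H$). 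Hence the induced subgraph $R(H)$ has order at least $n-n_H-(n_Hd_{\max}-2\varepsilon_H)=n-n_H(1+d_{\max})+2\varepsilon_H$.

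Next I would use Proposition~\ref{prop:one-large}: because $\rho(H)>2\geqslant\frac{-\lambda_{\min}(S)-1}{2}$, we get $\rho(R(H))<\frac{-\lambda_{\min}-1}{2}=2$, so by Smith's theorem (Theorem~\ref{spectralradius2}) every connected component of $R(H)$ is a tree, indeed a proper subgraph of one of the Smith graphs. Now consider the disjoint union $H^\ast:=H\,\dot\cup\,R(H)$, which is an induced subgraph of $G$; by Lemma~\ref{SandA}(ii), $\lambda_{\min}(S(H^\ast))\geqslant\lambda_{\min}(S)=-5$. The key point is that $S(H^\ast)+5\mathbf I$ has nullity at most one: since $\rho(H)>2$, the component $H$ already "uses up" the possible $-5$ eigenvalue, and adding the forest $R(H)$ — whose every component has spectral radius strictly less than $2$, hence Seidel smallest eigenvalue strictly greater than $-5$ — cannot create another eigenvalue equal to $-5$; more precisely one argues via interlacing and Lemma~\ref{onelarge}/Proposition~\ref{prop:one-large} that at most one eigenvalue of $S(H^\ast)$ equals $-5$, so $\operatorname{rk}(S(H^\ast)+5\mathbf I)\geqslant n_{H^\ast}-1$.

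Finally I would apply interlacing (Theorem~\ref{interlacing}) to pass from $H^\ast$ back to the full Seidel matrix $S$: deleting the $n-n_{H^\ast}$ rows/columns outside $H^\ast$ can decrease the number of eigenvalues equal to a given value by at most $n-n_{H^\ast}$, so the multiplicity of $-5$ as an eigenvalue of $S$ is at most $(n-n_{H^\ast})+1$, whence
\[
\operatorname{rk}(S+5\mathbf I)\;\geqslant\;n-\bigl((n-n_{H^\ast})+1\bigr)\;=\;n_{H^\ast}-1\;=\;n_H+|V(R(H))|-1.
\]
Substituting $|V(R(H))|\geqslant n-n_H(1+d_{\max})+2\varepsilon_H$ gives $\operatorname{rk}(S+5\mathbf I)\geqslant n-n_H(1+d_{\max})+2\varepsilon_H+n_H-1$. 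To reach the claimed bound with $\alpha(H)$ in place of $n_H-1$, I would refine the counting of $N(H)$: an independent set of size $\alpha(H)$ inside $H$ can be switched so that, together with the argument above, one recovers an extra $\alpha(H)-(n_H-1)$ — this is where I expect the main obstacle to lie. The honest version is to keep the independent set $I$ of $H$ of size $\alpha(H)$ intact: vertices of $I$ have no edges among themselves, so the incidence count leaving $H$ is at most $n_Hd_{\max}-2\varepsilon_H$ and, crucially, one can include $I$ among the "free" vertices, replacing the crude $n_H-1$ by $\alpha(H)$ after the nullity-one analysis is applied to $H\,\dot\cup\,R(H)\,\dot\cup\,(\text{copies of }K_1\text{ from }I)$. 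The delicate step is verifying that enlarging the disjoint union in this way still keeps the nullity of $S(\cdot)+5\mathbf I$ equal to one, i.e.\ that no new $-5$ eigenvalue is created; this again follows from Proposition~\ref{prop:one-large} applied with $H$ as the distinguished large-$\rho$ component, since all the added pieces are isolated vertices or subforests of Smith graphs, all having Seidel smallest eigenvalue $>-5$.
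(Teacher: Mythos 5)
Your first half is exactly right and matches the paper: the count $|V(R(H))|\geqslant n-n_H(1+d_{\max})+2\varepsilon_H$ and the deduction $\rho(R(H))<2$ from Proposition~\ref{prop:one-large} are both correct. The gap is in the second half. Your central claim --- that $S(H\,\dot\cup\,R(H))+5\mathbf I$ has nullity at most one because ``$H$ already uses up the possible $-5$ eigenvalue'' --- is not delivered by Lemma~\ref{onelarge} or Proposition~\ref{prop:one-large}, and it is not true in general. Those results only control adjacency eigenvectors that are \emph{not} perpendicular to $\mathbf j$, whereas a Seidel eigenvalue $-5$ is produced by any adjacency eigenvector for the eigenvalue $2$ that \emph{is} perpendicular to $\mathbf j$ (since $S=\mathbf J-\mathbf I-2A$). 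If $A(H)$ has $2$ as an interior eigenvalue with eigenvectors orthogonal to $\mathbf j$ --- nothing in the hypotheses rules this out --- then each such eigenvector, extended by zero, gives a null vector of $S(H\,\dot\cup\,R(H))+5\mathbf I$, and your interlacing step only yields $\operatorname{rk}(S+5\mathbf I)\geqslant n_{H^\ast}-k$ for the actual nullity $k$. Your final patch does not close this: the graph $H\,\dot\cup\,R(H)\,\dot\cup\,\overline{K}_{\alpha(H)}$ is not an induced subgraph of $G$, because the vertices of the independent set $I\subseteq V(H)$ cannot appear both inside $H$ and again as isolated vertices.

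The repair is to discard $H$ from the auxiliary graph rather than keep it. Let $I$ be a maximum independent set of $H$ and take the subgraph of $G$ induced on $V(R(H))\cup I$. Since no vertex of $R(H)$ has a neighbour in $H$ and $I$ is independent, this induced subgraph is exactly $R(H)(\alpha(H))$, i.e.\ $R(H)$ together with $\alpha(H)$ isolated vertices; it has spectral radius $\rho(R(H))<2$, so by Lemma~\ref{SandA}(i) its Seidel matrix has smallest eigenvalue strictly greater than $-5$, and $S(R(H)(\alpha(H)))+5\mathbf I$ is positive definite. A positive definite principal submatrix of order $m$ forces $\operatorname{rk}(S+5\mathbf I)\geqslant m$, hence
\begin{equation*}
\operatorname{rk}(S+5\mathbf I)\;\geqslant\;n_{R(H)}+\alpha(H)\;\geqslant\;n-n_H(1+d_{\max})+2\varepsilon_H+\alpha(H),
\end{equation*}
with no multiplicity or interlacing analysis needed. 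This is the paper's argument, and you were one step away from it.
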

\begin{proof}
For a vertex $x$ in $G$, denote by $d_x(G)$ the valency of $x$ in $G$. Let $d_{\max}(V(H))$ be the maximum valency among all vertices in $V(H)$ in $G$, that is, $d_{\max}(V(H)):=\max\{d_x\mid x\in V(H)\}$. Let $R(H)$ be the subgraph of $G$ induced on the vertices that are neither vertices of $H$ nor have a neighbour in $H$. Then $R(H)$ has at least $n-(n_H(1+d_{\max}(V(H)))-2\varepsilon_H) = n-n_H(1+d_{\max}(V(H)))+2\varepsilon_H$ vertices. As $\rho(R(H))<2$, by Lemma \ref{SandA}, it follows that $S(R(H)(\alpha(H)))+5\mathbf{I}$ has full rank. This shows that
\begin{align*}
  {\rm rk} (S+5\mathbf{I}) & \geqslant n-n_H(1+d_{\max}(V(H)))+2\varepsilon_H+\alpha(H) \\
   & \geqslant n-n_H(1+d_{\max})+2\varepsilon_H+\alpha(H).
\end{align*}
\end{proof}

As a consequence of Lemma \ref{BoundedrkbyH}, we have the following theorem.

\begin{theorem}\label{LS16}
Let $S$ be a Seidel matrix with $\lambda_{\min}(S)=-5$ of order $n\geqslant277$. If the switching class of $S$ contains a graph $G$ with maximum valency $d_{\max}\leqslant 16$, then ${\rm rk} (S+5\mathbf{I}) \geqslant \frac{2n}{3}+1$.
\end{theorem}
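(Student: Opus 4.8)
The plan is to obtain the bound directly from Lemma~\ref{BoundedrkbyH} and Proposition~\ref{LS2}, via a short case analysis on the maximum valency $d_{\max}$ of $G$. If $\rho(G)\leqslant 2$ we are done by Proposition~\ref{LS2}, so assume $\rho(G)>2$. Then Lemma~\ref{BoundedrkbyH} applies to any induced subgraph $H$ of $G$ with $\rho(H)>2$ and gives ${\rm rk}(S+5\mathbf{I})\geqslant n-n_H(1+d_{\max})+2\varepsilon_H+\alpha(H)$. Hence it suffices, for every such $G$, to produce an induced $H$ with $\rho(H)>2$ satisfying $n_H(1+d_{\max})-2\varepsilon_H-\alpha(H)+1\leqslant n/3$.

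First I would dispose of the case $d_{\max}\leqslant 4$. By Corollary~\ref{>2}, $G$ contains an induced copy $H$ of one of the $18$ minimal graphs with spectral radius greater than $2$; each of these is connected with at most $10$ vertices (so $\varepsilon_H\geqslant n_H-1$), and a finite check of the $18$ triples $(n_H,\varepsilon_H,\alpha(H))$ gives $5n_H-2\varepsilon_H-\alpha(H)+1\leqslant 28$, the extreme case being $\tilde{E}^+_8$. Since $d_{\max}\leqslant 4$, this forces $n_H(1+d_{\max})-2\varepsilon_H-\alpha(H)+1\leqslant 28\leqslant n/3$, hence ${\rm rk}(S+5\mathbf{I})\geqslant n-28\geqslant\frac{2n}{3}+1$.

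The remaining case $d_{\max}\geqslant 5$ is the one I expect to be the main obstacle, since it is where the numerics are tight. Choose a vertex $v$ of valency $d_{\max}$ and any five of its neighbours, and let $H$ be the subgraph of $G$ induced on $v$ and these five vertices. Then $n_H=6$ and $H$ contains a spanning $K_{1,5}$, so $\rho(H)\geqslant\rho(K_{1,5})=\sqrt{5}>2$. Write $\varepsilon_H=5+k$, where $k$ is the number of edges among the five chosen neighbours; the subgraph they induce has independence number at least $5-k$ (order minus size), and such an independent set is independent in $H$, so $\alpha(H)\geqslant 5-k$ and $2\varepsilon_H+\alpha(H)\geqslant 2(5+k)+(5-k)=15+k\geqslant 15$. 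Lemma~\ref{BoundedrkbyH} then gives ${\rm rk}(S+5\mathbf{I})\geqslant n-6(1+d_{\max})+2\varepsilon_H+\alpha(H)\geqslant n-6d_{\max}+9\geqslant n-87$, using $d_{\max}\leqslant 16$, and $n-87\geqslant\frac{2n}{3}+1$ because $n\geqslant 277$.

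The point requiring care is this last estimate: $n-6d_{\max}+9\geqslant\frac{2n}{3}+1$ is equivalent to $n\geqslant 18d_{\max}-24$, and for $n\geqslant 277$ this holds precisely when $d_{\max}\leqslant 16$; thus the hypothesis on $d_{\max}$ is used sharply and this is exactly where the bound $n\geqslant 277$ enters. The only other ingredient is the finite list of $18$ inequalities in the low-valency case — in particular for $\tilde{D}^+_8$ and $\tilde{E}^+_8$, whose orders are largest — which is routine, and the reduction to $\rho(G)>2$ followed by the invocation of Lemma~\ref{BoundedrkbyH}, which is immediate.
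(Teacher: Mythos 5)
Your proof is correct and follows essentially the same route as the paper: reduce to $\rho(G)>2$ via Proposition~\ref{LS2}, then apply Lemma~\ref{BoundedrkbyH} to a small induced subgraph of spectral radius greater than $2$, splitting into two cases according to the maximum valency, with the same numerics ($n-87$ versus $\frac{2n}{3}+1$) appearing in the critical case. The only noteworthy difference is that in the high-valency case you construct $H$ explicitly as the subgraph induced on a maximum-valency vertex and five of its neighbours (so $n_H=6$ and $2\varepsilon_H+\alpha(H)\geqslant15$ follow directly), whereas the paper takes a minimal induced subgraph with $\rho(H)>2$ and asserts $n_H\leqslant6$ when $d_{\max}(V(H))\geqslant4$; your explicit construction is, if anything, the cleaner justification of that step.
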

\begin{proof}
Let $G$ be a graph in the switching class of $S$ with $d_{\max}\leqslant16$. By Proposition \ref{LS2}, we may assume $\rho(G)>2$. For any vertex $x$ in $G$, we denote the valency of $x$ in $G$ by $d_x$. Let $H$ be a minimal induced subgraph of $G$ with $\rho(H)>2$. Let $d_{\max}(V(H))$ be the maximum valency of all vertices in $V(H)$ in $G$, that is, $d_{\max}(V(H)):=\max\{d_x\mid x\in V(H)\}$.
Clearly, $d_{\max}(V(H))\geqslant3$. If $d_{\max}(V(H))=3$, then, by Corollary \ref{>2}, we have $n_{H}\leqslant 10$, $\varepsilon_H\geqslant n_H-1$ and $\alpha(H)\geqslant 1$. By Lemma \ref{BoundedrkbyH}, we have
\begin{align*}
{\rm rk}(S+5\mathbf{I})&\geqslant n-n_H(1+d_{\max}(V(H)))+2\varepsilon_H+\alpha(H)\\
&\geqslant n-n_H(1+d_{\max}(V(H)))+2(n_H-1)+1\\
&\geqslant n-21\\
&>\frac{2n}{3}+1,
\end{align*}
as $n\geqslant277$. On the other hand, if $d_{\max}(V(H))\geqslant4$, then $n_H\leqslant 6$, $\varepsilon_H\geqslant n_H-1$ and $\alpha(H)\geqslant 1$, by Corollary \ref{>2}. By Lemma \ref{BoundedrkbyH}, we have
\begin{align*}
{\rm rk}(S+5\mathbf{I})&\geqslant n-n_H(1+d_{\max})+2\varepsilon_H+\alpha(H)\\
&\geqslant n-n_H(1+d_{\max})+2(n_H-1)+1\\
&\geqslant n-91\\
&>\frac{2n}{3}+1,
\end{align*}
as $n\geqslant277$. This shows the theorem.
\end{proof}

Now, we show the main result of this section.

\begin{theorem}\label{LS49}
Let $S$ be a Seidel matrix with $\lambda_{\min}(S)=-5$ of order $n\geqslant277$. If the independence number $\alpha([S])$ of $[S]$ satisfies $\alpha([S])\geqslant49$, then {\rm rk}$(S+5\mathbf{I})\geqslant\frac{2n}{3}+1$.
\end{theorem}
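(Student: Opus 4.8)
The plan is to combine the forbidden‑subgraph machinery of Section~3 with the rank estimates of Section~\ref{sec:49}. Two reductions come first. By Proposition~\ref{LS2} we may assume that no graph in $[S]$ has spectral radius $\le 2$; since $\lambda_{\min}(S)=-5\ge -2\rho(G)-1$ gives $\rho(G)\ge 2$ for every $G\in[S]$ (Lemma~\ref{SandA}), this means every $G\in[S]$ has $\rho(G)>2$. By Theorem~\ref{LS16} we may also assume that every $G\in[S]$ has $d_{\max}\ge 17$. Next, since $\alpha([S])=\alpha(\mathcal{Sw}(S))\ge 49>2$, a maximum coclique of $\mathcal{Sw}(S)$ splits into two subsets $A,B\subseteq\{1,\dots,n\}$, which one checks must be disjoint, with $A$ and $B$ independent in $G(S)$ and completely joined to one another; switching with respect to $B$ makes $A\cup B$ an independent set. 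Hence there is a graph $G\in[S]$ with an independent set $Y$, $|Y|=a:=\alpha([S])\ge 49$, and we may take $Y$ maximum so that $a=\alpha(G)$; this $G$ still satisfies $\rho(G)>2$ and $d_{\max}(G)\ge 17$.

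Second, I would localize the degrees using Lemma~\ref{36}. For a vertex $v$, the induced subgraph $G[\{v\}\cup N(v)]$ is the cone $C(G[N(v)])$, and together with a maximum independent set of $R(v)$ it gives an induced copy of $C(G[N(v)])(\alpha(R(v)),0)$; so whenever $\alpha(R(v))\ge 1$, part (ii) gives $-5=\lambda_{\min}(S(G))\le\lambda_{\min}(S(K_{1,d_v}(\alpha(R(v)),0)))$, and then part (i) gives $(d_v-4)(\alpha(R(v))-4)\le 36$. Combining this with $\alpha(R(v))\ge a-1-d_v$ and $a\ge 49$, the inequality is violated for every $v$ with $5\le d_v\le 43$; hence every vertex of $G$ has degree $\le 4$ or $\ge 44$, and the set $W$ of vertices of degree $\ge 44$ is nonempty because $d_{\max}(G)\ge 17$. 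For $w\in W$ the same estimate forces $\alpha(R(w))\le 4$, whence $w\notin Y$ (otherwise $|Y|\le 1+\alpha(R(w))\le 5$) and $|N(w)\cap Y|\ge a-4$. Counting edges between $W$ and $Y$, and using that every vertex of $Y\subseteq V(G)\setminus W$ has degree $\le 4$, yields $(a-4)|W|\le 4a$, so $|W|\le 4$; moreover $Y\subseteq L:=V(G)\setminus W$ and $|L|\ge n-4$.

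Third, I would split on $\rho(G[L])$. If $\rho(G[L])>2$, then $G[L]$, hence $G$, contains a minimal induced subgraph $H$ with $\rho(H)>2$ all of whose vertices have degree $\le 4$ in $G$; by Corollary~\ref{>2} such an $H$ is not $K_{1,5}$ and has $n_H\le 10$ vertices, so Lemma~\ref{BoundedrkbyH} gives ${\rm rk}(S+5\mathbf I)\ge n-5n_H+2\varepsilon_H+\alpha(H)\ge n-3n_H-1\ge n-31>\tfrac{2n}{3}+1$ since $n\ge 277$. If $\rho(G[L])\le 2$ and $\lambda_{\min}(S(G[L]))>-5$, then the principal submatrix $S(G[L])+5\mathbf I$ is nonsingular, so by interlacing (Theorem~\ref{interlacing}) $\dim\ker(S+5\mathbf I)\le |W|\le 4$ and ${\rm rk}(S+5\mathbf I)\ge n-4>\tfrac{2n}{3}+1$. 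The one remaining case is $\rho(G[L])\le 2$ together with $\lambda_{\min}(S(G[L]))=-5$, and I expect this to be the main obstacle.

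Fourth, in that case I would argue structurally. By Proposition~\ref{prop:one-large} there is a unique component $C$ of $G$ with $\rho(C)>2$, the rest of $G$ is a forest $F$, $W\subseteq V(C)$, and $C[V(C)\cap L]$ is a disjoint union of graphs from Smith's list (Theorem~\ref{spectralradius2}); let $k'$ be the number of these components whose spectral radius equals $2$. Since $S(F)+5\mathbf I$ is nonsingular, $\dim\ker(S+5\mathbf I)=\dim\ker(S(C)+5\mathbf I)$, which is $0$ unless $\lambda_{\min}(S(C))=-5$; in the latter case the Proposition following Lemma~\ref{onelarge} shows the $2$‑eigenspace of $A(C)$ is perpendicular to $\mathbf j$, so $\dim\ker(S(C)+5\mathbf I)\le\dim\ker(A(C)-2\mathbf I)+1$. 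Writing $A(C)-2\mathbf I$ in block form along the partition $(V(C)\cap L,\,W)$ and using that the $L$‑block has nullity $k'$ with null space spanned by the Perron vectors of the $\rho=2$ Smith components, a rank computation gives $\dim\ker(A(C)-2\mathbf I)\le k'+|W|-2\,{\rm rk}(B)$, where $B$ is the $k'\times|W|$ matrix recording, for each such component $D$ and each $w\in W$, the weighted overlap of $N(w)$ with $D$. Because $C$ is connected and $|W|$ is small, each $w\in W$ is adjacent to every such component; ruling out the degenerate adjacency patterns — this is where additional forbidden subgraphs such as $B_1(14)$ and $B_2(9)$ of Lemma~\ref{forbidden2} enter, and where one also uses that these components cannot all be triangles (a disjoint union of triangles has smallest Seidel eigenvalue $-2\ne-5$) — one obtains ${\rm rk}(B)=|W|$ together with $3k'<|V(C)\cap L|$, whence $\dim\ker(S+5\mathbf I)\le k'-|W|\le\tfrac{n}{3}-1$, i.e. ${\rm rk}(S+5\mathbf I)\ge\tfrac{2n}{3}+1$. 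The delicate point is precisely this last bookkeeping: the naive counting leaves an additive gap of the order of $|W|$, which closes only after the interaction between the at most four near‑dominating vertices of $W$ and the Smith components of $C-W$ has been pinned down.
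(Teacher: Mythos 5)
There is a genuine gap at the very step your whole degree dichotomy rests on. You claim that $G[\{v\}\cup N(v)]$ together with a maximum independent set of $R(v)$ gives an induced copy of $C(G[N(v)])(\alpha(R(v)),0)$, and then invoke Lemma~\ref{36}(ii). But Lemma~\ref{36}(ii) is a statement about the \emph{disjoint union} $C(G)\,\dot\cup\,\overline{K}_s\,\dot\cup\,tK_2$: the vertices of $R(v)$ are only guaranteed to be non-adjacent to $v$, not to $N(v)$, so the subgraph you actually induce is the cone plus an independent set plus an arbitrary bipartite graph between that set and $N(v)$. This is not a cosmetic issue: in the proof of Lemma~\ref{36}(ii) the eigenvector entry $b$ on the isolated part is negative when $\lambda_{\min}$ is near $-5$, so inserting edges between $N(v)$ and the appended independent set can \emph{increase} the smallest Seidel eigenvalue, and the inequality $(d_v-4)(\alpha(R(v))-4)\leqslant 36$ does not follow. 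Everything downstream (the dichotomy ``$d_v\leqslant 4$ or $d_v\geqslant 44$'', the set $W$, $|W|\leqslant 4$, and the case analysis on $\rho(G[L])$) is therefore unsupported. The paper avoids exactly this trap: it first shows that every vertex has at most $4$ neighbours in the independent set $C$ (here the induced subgraph on $C\cup\{x\}$ really is $K_{1,r}(\alpha-r)$, because $C$ is independent), and only then appends to a cone $H$ on $8$ vertices the $\geqslant 49-4\cdot 8=17$ vertices of $C$ that are non-adjacent to \emph{all} of $H$, which is a genuine disjoint union $H(17)$; this yields $d_{\max}\leqslant 6$ and the conclusion follows at once from Theorem~\ref{LS16}.

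Separately, even granting the dichotomy, your final case ($\rho(G[L])\leqslant 2$ with $\lambda_{\min}(S(G[L]))=-5$) is not a proof: the claimed bound $\dim\ker(A(C)-2\mathbf{I})\leqslant k'+|W|-2\,\mathrm{rk}(B)$ and the assertions $\mathrm{rk}(B)=|W|$ and $3k'<|V(C)\cap L|$ are announced rather than established, and you acknowledge the bookkeeping is open. Since the paper's route closes the whole theorem in a few lines once the local degree bound is in hand, I would recommend replacing your second paragraph by the paper's two-stage forbidden-subgraph argument rather than trying to repair the kernel computation.
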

\begin{proof}
Let $\alpha:=\alpha([S])$. Take a graph $G$ in the switching class of $S$ with independence number $\alpha(G)=\alpha$. Let $C$ be an independent set of $G$ of order $\alpha$. We may assume that all vertices, that are not in $V(C)$, have at most $\lfloor\frac{\alpha}{2}\rfloor$ neighbours in $C$.

Let $x$ be a vertex outside $V(C)$ and assume that $x$ has $r$ neighbours in $C$. The subgraph of $G$ induced on $V(C)$ $\cup$ $\{x\}$ is isomorphic to $K_{1,r}(\alpha-r)$ with $\alpha-r\geqslant\lceil\frac{\alpha}{2}\rceil\geqslant25$, as $\alpha\geqslant49$.
As $\lambda_{\min}(S(K_{1,r}(s)))\geqslant-5$ if and only if $(r-4)(s-4)\leqslant36$, by Lemma \ref{36} $(i)$, we see $r\leqslant4$ when $r + s = \alpha \geqslant 49$.
That is, every vertex $x$ outside $C$ has at most $4$ neighbours in $C$.

Now we show the following claim.

\begin{claim}\label{dG6}
The maximum valency $d_{\max}$ of $G$ is at most $6$.
\end{claim}
\noindent{\bf Proof of Claim \ref{dG6}:} Assume $d_{\max}\geqslant7$. Let $x$ be a vertex with valency at least $7$, and let $y_1,\ldots,y_7$ be $7$ of its neighbours. Let $H$ be the subgraph of $G$ induced by $\{x, y_1, \ldots, y_7\}$. The number of vertices in $C$ that are in $H$ or have at least one neighbour in $H$ is at most $4\times8=32$. Note that $\lambda_{\min}(S(H(17)))\leqslant\lambda_{\min}(S(K_{1,7}(17)))<-5$, by Lemma \ref{36} $(i)$ and $(ii)$. This shows the claim.
\qed

Therefore, {\rm rk}$(S+5\mathbf{I})\geqslant\frac{2n}{3}+1$, by Theorem \ref{LS16}.
\end{proof}

Note that, by Theorem~\ref{LS49} and the Ramsey theory, it follows that Conjecture~\ref{conj:LS} is true when $n$ is sufficiently large, a result also obtained by Neumaier \cite{neumaier1989graph}.

\section{The switching class contains a triangle-free graph}\label{sec:triangle-free}
Let $S$ be a Seidel matrix with $\lambda_{\min}(S)=-5$ of order $n\geqslant277$. In this section, we will show that Theorem~\ref{maintheorem} is true when the switching class contains a triangle-free graph.
Our main result of this section is as follows.

\begin{theorem}\label{LStrianglefree}
Let $S$ be a Seidel matrix with $\lambda_{\min}(S)=-5$ of order $n\geqslant277$. Assume that the switching class of $S$ contains a triangle-free graph $G$. Then {\rm rk}$(S+5\mathbf{I})\geqslant\frac{2n}{3}+1$.
\end{theorem}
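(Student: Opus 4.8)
The goal is to bound $\mathrm{rk}(S+5\mathbf I)$ from below when $[S]$ contains a triangle-free graph $G$. By Theorem~\ref{LS49} we may assume $\alpha([S])\leqslant 48$, and by Proposition~\ref{LS2} we may assume $\rho(G)>2$, so $G$ has a vertex of valency at least $3$; by Theorem~\ref{LS16} we may also assume some graph in $[S]$ has a vertex of large valency. Since $G$ is triangle-free, the neighbourhood $N_G(x)$ of any vertex $x$ is an independent set, hence the induced subgraph on $\{x\}\cup N_G(x)$ is a star $K_{1,d_x}$, and more generally $G_{\{x\}\cup N_G(x)\cup I}$ for an independent set $I\subseteq R_G(x)$ is $K_{1,d_x}(|I|)$. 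The first step is therefore to feed this into Lemma~\ref{36}$(i)$: if $x$ has valency $r=d_x$ and we can find $s$ independent vertices among the $\geqslant n - 1 - d_x - \sum_{y\sim x} (d_y-1)$ vertices of $R_G(x)$ lying outside, then we need $(r-4)(s-4)\leqslant 36$, which caps $d_x$ once $s$ is moderately large. This should pin the maximum valency of $G$ (indeed of any triangle-free graph in $[S]$) to a small constant, after which Theorem~\ref{LS16} finishes the argument.

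The delicate part is producing that independent set $I$ inside $R_G(x)$. One route: since $\alpha(G)=\alpha([S])\leqslant 48$ is small while $n\geqslant 277$ is large, $R_G(x)$ is a large triangle-free graph (of order at least $n - 1 - d_x - \sum_{y\sim x}(d_y - 1)$, which is still large if valencies are bounded — but we are trying to bound valencies, so this is slightly circular and must be unwound carefully). To break the circularity I would argue in two stages: first show $G$ is \emph{locally sparse} in the sense that a high-valency vertex $x$ together with a few of its neighbours already forms a subgraph $H$ with $\rho(H)>2$ (a star $K_{1,7}$ suffices, as in Claim~\ref{dG6}), and that the set of vertices in $G$ neither in $H$ nor adjacent to $H$ is large — because any vertex adjacent to two nonadjacent vertices of the star creates either a triangle (impossible) or forces structure we can exploit, and in any case the number of such vertices is controlled by $\alpha(G)$ via counting common neighbours. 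Concretely, in a triangle-free graph, two vertices at distance $2$ through $x$ have $x$ as a common neighbour, and the bipartite-like structure around $x$ limits how many outside vertices can attach to $N_G(x)$; combined with $\alpha(G)\leqslant 48$ this gives $|R_G(H)|\geqslant n - (\text{const})$, so $R_G(H)$ contains a large independent set, contradicting Lemma~\ref{36} once $d_x$ is large.

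So the skeleton is: (1) reduce to $\alpha([S])\leqslant 48$, $\rho(G)>2$, via Theorems~\ref{LS49}, \ref{LS16}, and Proposition~\ref{LS2}; (2) pick a vertex $x$ of maximum valency $d_{\max}$, assume $d_{\max}$ large, take $H=K_{1,7}$ (or $K_{1,d_{\max}}$) inside $\{x\}\cup N_G(x)$; (3) use triangle-freeness plus $\alpha(G)\leqslant 48$ to show $|V(G)\setminus(V(H)\cup N_G(V(H)))|$ is at least $n$ minus an absolute constant, hence contains an independent set of size $\geqslant \lceil \alpha(G)/2\rceil$ or, better, of size growing with $n$; (4) invoke Lemma~\ref{36}$(i)$--$(ii)$ to get $\lambda_{\min}(S(H(s)))\leqslant \lambda_{\min}(S(K_{1,7}(s)))<-5$, a contradiction, forcing $d_{\max}$ bounded by a small constant (one expects $\leqslant 16$); (5) conclude by Theorem~\ref{LS16}.

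**Main obstacle.** The crux is step (3): controlling $|N_G(V(H))|$, i.e.\ showing that not too many vertices of $G$ are "used up" by being adjacent to the chosen star $H$. In a triangle-free graph this amounts to bounding, for each neighbour $y_i$ of $x$, the valency $d_{y_i}$ — but those valencies could a priori be as large as $d_{\max}$ itself, so one cannot naively subtract $\sum d_{y_i}$. The resolution must use that the $y_i$ are pairwise nonadjacent and share the common neighbour $x$, so their other neighbourhoods are "spread out", together with the bound $\alpha(G)\leqslant 48$: a large independent set forces the graph to be dense somewhere, and triangle-freeness then forbids that density from concentrating around $x$. Making this counting tight enough to force $d_{\max}$ down to $16$ — rather than merely to some larger constant — is where the real work lies, and it may require a more clever choice of induced subgraph $H$ (e.g.\ one of the graphs $B_1, B_2$ of Lemma~\ref{forbidden2}, or a tree on few vertices with several leaves) whose "shadow" $N_G(V(H))$ in a triangle-free host is provably small.
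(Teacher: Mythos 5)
Your reductions in step (1) are the same as the paper's, and you have correctly located the crux, but you have not resolved it, and the route you sketch for the endgame is not the one that works. Two ideas are missing. First, the circularity you worry about is broken by an observation you never make: since $G$ is triangle-free, the closed neighbourhood of a vertex of valency $d_{\max}$ induces a star $K_{1,d_{\max}}$, which is switching equivalent to an independent set of order $d_{\max}+1$, so $\alpha([S])\geqslant d_{\max}+1$. Combined with Theorem~\ref{LS49} this immediately gives $d_{\max}\leqslant 47$, so $|N_G(V(H))|\leqslant n_H\cdot d_{\max}$ is bounded by an absolute constant for any small $H$ --- no delicate counting of the neighbours' valencies is needed. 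Second, your stated goal of forcing $d_{\max}\leqslant 16$ and then invoking Theorem~\ref{LS16} is not what the argument achieves and there is no indication it can be achieved: in the remaining range $17\leqslant d_{\max}\leqslant 47$ the contradiction is obtained on $n$, not on $d_{\max}$.

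Concretely, the paper's endgame is: split $N_G(x)$ into three parts $S_1,S_2,S_3$ with $|S_1|=|S_2|=5$, and use the forbidden graphs $K_{1,5}(41)$, $K_{1,7}(17)$ together with the fact that $\rho(R(H_i))<2$ (hence $R(H_i)$ is a forest and $n_{R(H_i)}\leqslant 2\alpha(R(H_i))$) to show the three stars and their shadows cover at most $240<277$ vertices; the leftover vertex $y$ then yields an induced $K_{2,3}$ (triangle-freeness makes $y$'s neighbours in $N_G(x)$ independent). Finally, $K_{2,3}(19)\in\mathcal{F}_{-5}$ forces $n_{R(K_{2,3})}\leqslant 2\cdot 18=36$, and the count $n\leqslant 5(d_{\max}+1)-12+36\leqslant 264$ contradicts $n\geqslant 277$. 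The $K_{2,3}$ is exactly the ``more clever choice of $H$'' you suspected was needed, but your proposal neither produces it nor identifies which forbidden subgraph makes its shadow small, so the proof as proposed does not go through. Also note that an independent set in $R_G(H)$ ``of size growing with $n$'' is impossible here, since $\alpha(G)\leqslant 48$; the inequality is used in the opposite direction, namely $n_{R}\leqslant 2\alpha(R)$ with $\alpha(R)$ capped by a forbidden subgraph.
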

\begin{proof}
Let $G$ be a triangle-free graph in $[S]$.
By Proposition \ref{LS2} and Theorem~\ref{LS16}, we may assume that $\rho(G)>2$ and $d_{\max}\geqslant17$.
Note that $\alpha([S])\geqslant 1+d_{\max}$, as $K_{1,t}$ is switching equivalent to an independent set of order $t+1$. Hence, if $d_{\max}\geqslant48$, then, by Theorem \ref{LS49}, {\rm rk}$(S+5\mathbf{I})\geqslant\frac{2n}{3}+1$.
So we only need to consider $17\leqslant d_{\max}\leqslant47$. For a subgraph $H$ of $G$, we denote by $n_H$ the order of $H$, and by $R(H)$ the subgraph of $G$ induced on the vertices of $G$ that are neither in $H$ nor have a neighbour in $H$.

\begin{claim}\label{K2,3}
The graph $G$ contains an induced subgraph isomorphic to $K_{2,3}$.
\end{claim}
\noindent{\bf Proof of Claim \ref{K2,3}:} Let $x$ be a vertex of $G$ with valency $d_x=d_{\max}\geqslant17$. We partition the neighbours of $x$ into $3$ sets, say $S_1, S_2$ and $S_3$, such that $|S_1|=|S_2|=5$. Let $H_i$ be the subgraph induced on $S_i\cup\{x\}$, for $i=1,2,3$. Then $R(H_i)$ is triangle-free and satisfies $\rho(R(H_i))<2$, by Proposition \ref{prop:one-large}. It follows that $n_{R(H_i)}\leqslant2\alpha(R(H_i))$, by Theorem \ref{spectralradius2}.

As $K_{1,5}(41)$ and $K_{1,7}(17)$ are in $\mathcal{F}_{-5}$, by Lemma \ref{36} $(i)$, we find
\begin{align*}
n_{R(H_1)}&\leqslant2\alpha(R(H_1))\leqslant2\times40=80,\\
n_{R(H_2)}&\leqslant2\alpha(R(H_2))\leqslant2\times40=80,\\
n_{R(H_3)}&\leqslant2\alpha(R(H_3))\leqslant2\times16=32.
\end{align*}

Then $1+|S_1|+|S_2|+|S_3|+n_{R(H_1)}+n_{R(H_2)}+n_{R(H_3)}\leqslant1+47+80+80+32=240$, as $|S_1|+|S_2|+|S_3|=d_{\max}\leqslant47$. This implies that there exists a vertex $y$ satisfies $y\not\sim x$ and $y$ has at least one neighbour in $S_i$, for $i=1, 2, 3$. This shows the claim.
\qed

Let $K$ be an induced subgraph of $G$ isomorphic to $K_{2,3}$.
As $K_{2,3}(19)\in\mathcal{F}_{-5}$, by Lemma \ref{forbidden}, we find that $\alpha(R(K))\leqslant18$. Since $\rho(R(K))<2$, we have $n_{R(K)}\leqslant2\alpha(R(K))\leqslant2\times18=36$. This means that
\begin{align*}
n&\leqslant n_K(d_{\max}+1)-2\varepsilon_{K}+n_{R(K)}\\
&\leqslant5(47+1)-12+36\\
&=264,
\end{align*}
a contradiction. This finishes the proof of this theorem.
\end{proof}

\section{A new bound for the independence number}
\label{sec:29}
We start with the following result.

\begin{theorem}
Let $S$ be a Seidel matrix with $\lambda_{\min}(S)=-5$ of order $n\geqslant277$. If the clique number $\omega([S])$ of $[S]$ satisfies $\omega([S])\in \{2, 3, 5, 6\}$, then {\rm rk}$(S+5\mathbf{I})\geqslant\frac{2n}{3}+1$.
\end{theorem}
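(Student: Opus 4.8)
The plan is to reduce each of the four values $\omega([S])\in\{2,3,5,6\}$ to cases already handled. First I would dispose of the two small clique numbers. If $\omega([S])=2$ then by the remark in the Seidel-matrices subsection $[S]=[\mathbf{J-I}]$, so $S$ is switching equivalent to the Seidel matrix of a complete graph; one checks directly that $\lambda_{\min}(S)=-5$ forces $n=5$, contradicting $n\geqslant 277$, so the statement is vacuously true. If $\omega([S])=3$, pick a graph $G\in[S]$ with $\omega(G)=3$ and let $T=\{a,b,c\}$ be a triangle. I would switch with respect to a neighbourhood of one vertex of $T$ (or apply local switching to $T$) so that the resulting graph on $V(G)$ restricted away from $T$ is triangle-free; more robustly, I would argue via the complement or via Ramsey-type counting that $\omega([S])\leqslant 3$ together with $n\geqslant 277$ forces $[S]$ to contain a triangle-free graph, at which point Theorem~\ref{LStrianglefree} gives ${\rm rk}(S+5\mathbf I)\geqslant \frac{2n}{3}+1$. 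This last reduction is the first place I expect friction: making precise that a bounded clique number in the switching class (not merely in one representative) yields a triangle-free representative may require either a direct switching construction or invoking that $\mathcal{Sw}(S)$ has small clique number and using a Ramsey bound on $\mathcal{Sw}(S)$ to produce a large independent set, then combining with Theorem~\ref{LS49}.

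For $\omega([S])\in\{5,6\}$ the intended route is different: here I would invoke the known results of Lemmens--Seidel~\cite{lemmens1973} and Lin--Yu~\cite{lin2018equiangular} cited in the outline, which already establish Conjecture~\ref{conj:LS} when $\omega([S])\geqslant 5$. Concretely, a clique of size $\omega$ in $\mathcal{Sw}(S)$ corresponds, after switching, to a large star $K_{1,\omega-1}$ in a representative $G$, hence to a vertex of valency $\omega-1$; but more to the point, $\omega([S])\geqslant 5$ means the equiangular line system contains a large "regular" subconfiguration to which the Lemmens--Seidel pillar analysis applies and yields the bound $\lfloor\frac{3r-3}{2}\rfloor$, equivalently ${\rm rk}(S+5\mathbf I)\geqslant\lfloor\frac{2n}{3}\rfloor+1$, once $n\geqslant 277$. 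So for these two values the work is to translate the cited rank/dimension statements into the inequality ${\rm rk}(S+5\mathbf I)\geqslant\frac{2n}{3}+1$ and check the numerical threshold $n\geqslant 277$ lines up with $23\leqslant r\leqslant 185$ versus $r\geqslant 185$; the case boundary $r=185$ (where $276=\lfloor\frac{3\cdot 185-3}{2}\rfloor$) is exactly where the two regimes of Conjecture~\ref{conj:LS} meet, and I would verify the inequality is consistent there.

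The main obstacle, as indicated, is the $\omega([S])=3$ case: unlike $\omega=2$ (rigid) and $\omega\in\{5,6\}$ (covered by prior literature), clique number exactly $3$ is not directly covered and must be funneled into Theorem~\ref{LStrianglefree}. I expect the clean argument to be: if $\omega([S])=3$, consider $\mathcal{Sw}(S)$, which has clique number $3$; if its independence number is at least $49$ we are done by Theorem~\ref{LS49}, and otherwise $\mathcal{Sw}(S)$ is a graph on $2n$ vertices with $\omega\leqslant 3$ and $\alpha\leqslant 48$, which by the Ramsey bound $R(4,49)$ forces $2n$ to be bounded by an absolute constant, and one checks this constant is below $2\cdot 277$, a contradiction. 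Hence $\omega([S])=3$ with $n\geqslant 277$ cannot occur unless $\alpha([S])\geqslant 49$, and Theorem~\ref{LS49} finishes it. I would present the four cases $\omega([S])=2$, $\omega([S])=3$, $\omega([S])=5$, $\omega([S])=6$ in that order, citing Theorem~\ref{LS49}, Theorem~\ref{LStrianglefree}, and~\cite{lemmens1973,lin2018equiangular} as the respective engines.
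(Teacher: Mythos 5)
Your overall architecture matches the paper's: the paper disposes of this theorem essentially by citation ($\omega([S])=6$ to Lemmens--Seidel \cite{lemmens1973}, $\omega([S])\in\{3,5\}$ to Lin--Yu \cite{lin2018equiangular}, $\omega([S])=2$ as trivial), together with the one-sentence observation that the cases $\omega([S])\leqslant 3$ also follow from Theorem~\ref{LStrianglefree}. Your handling of $\omega([S])=2$ and of $\omega([S])\in\{5,6\}$ is in line with this (one small slip: $[\mathbf{J-I}]$ is the switching class of the \emph{empty} graph, not of the complete graph; its Seidel spectrum is $\{n-1,(-1)^{n-1}\}$, so the hypothesis $\lambda_{\min}(S)=-5$ simply cannot hold, or one applies Proposition~\ref{LS2} since the spectral radius is $0$).

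The genuine gap is in the argument you commit to for $\omega([S])=3$. The Ramsey step fails quantitatively: you would need $R(4,49)\leqslant 2\cdot 277=554$, but $R(4,49)$ is far larger than $554$ (the known lower bounds for $R(4,t)$ already exceed $250$ at $t=22$ and grow superlinearly in $t$, so $K_4$-free graphs on $554$ vertices with independence number at most $48$ certainly exist). Moreover, that route discards the hypothesis $\lambda_{\min}(S)=-5$ entirely, which is the real engine of the theorem. The ``direct switching construction'' you gesture at but do not carry out is in fact a one-liner, and it is exactly what the paper records: take $G\in[S]$ and switch with respect to the neighbourhood of a vertex $v$ so that $v$ becomes isolated; if the remaining graph contained a triangle $\{a,b,c\}$, then switching with respect to $\{v\}$ would turn $\{v,a,b,c\}$ into a $4$-clique, contradicting $\omega([S])\leqslant 3$. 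Hence $[S]$ contains a triangle-free graph and Theorem~\ref{LStrianglefree} applies. With that substitution (or by simply citing Lin--Yu for $\omega([S])=3$, as the paper does) your proof is complete; as written, the step you designate as ``the clean argument'' does not work.
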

Indeed, the case $\omega([S])=6$ was shown by Lemmens and Seidel \cite{lemmens1973};
the case $\omega([S]) = 2$ is trivial, as shown in Section \ref{sec:49};
and the cases $\omega([S])\in\{3,5\}$ were shown by Lin and Yu \cite{lin2018equiangular}.
Note that the cases $\omega([S])\leqslant3$ also follow from Theorem \ref{LStrianglefree}, since one can always isolate a vertex and the rest of the graph is triangle-free.

In this section we will show a new bound for the independence number for the case $\omega([S])=4$.

\begin{theorem}\label{LS38}
Let $S$ be a Seidel matrix of order $n\geqslant277$ with $\lambda_{\min}(S)=-5$ and $\omega([S])=4$. If the independence number $\alpha([S])$ of $[S]$ satisfies $\alpha([S])\geqslant39$, then {\rm rk}$(S+5\mathbf{I})\geqslant\frac{2n}{3}+1$.
\end{theorem}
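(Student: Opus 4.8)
The plan is to mimic the structure of the proof of Theorem~\ref{LS49}, but working with the better forbidden graphs we now have at our disposal (notably $K_4(5)$, $K_{2,1,1}(11)$, and the $\tilde D^+$, $\tilde E^+$ families from Lemma~\ref{forbidden}), together with the hypothesis $\omega([S])=4$, which forbids $K_5$ as an induced subgraph of any graph in $[S]$. As in Theorem~\ref{LS49}, I would fix a graph $G$ in the switching class of $S$ realizing $\alpha(G)=\alpha:=\alpha([S])\geqslant 39$, pick a maximum independent set $C$ of $G$, and, after a switching normalization, assume every vertex outside $V(C)$ has at most $\lfloor\alpha/2\rfloor$ neighbours in $C$. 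For a vertex $x\notin V(C)$ with $r$ neighbours in $C$, the induced subgraph on $V(C)\cup\{x\}$ is $K_{1,r}(\alpha-r)$ with $\alpha-r\geqslant\lceil\alpha/2\rceil\geqslant 20$; by Lemma~\ref{36}(i), $\lambda_{\min}(S(K_{1,r}(\alpha-r)))\geqslant-5$ forces $(r-4)(\alpha-r-4)\leqslant 36$, so with $\alpha-r-4\geqslant 16$ we again get $r\leqslant 4$. Hence every vertex outside $C$ has at most $4$ neighbours in $C$.

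The key step is then to bound $d_{\max}$, the maximum valency of $G$, ideally down to $d_{\max}\leqslant 16$ so that Theorem~\ref{LS16} finishes the job. Suppose $x$ has valency $d\geqslant 17$ and let $N=N(x)$ be its neighbourhood, $|N|=d$. Because $\omega([S])=4$, the graph induced on $N\cup\{x\}$ has clique number at most $4$, so $N$ induces a triangle-free graph; in particular, by Ramsey's theorem $R(3,3)=6$, any $6$ vertices of $N$ contain a nonadjacent pair, and more usefully $N$ contains a large independent set, of size at least $\lceil d/2\rceil\geqslant 9$ (for triangle-free graphs $\alpha\geqslant n/2$ need not hold in general, so instead I would use $\alpha(N)\geqslant \sqrt{d}$ from Ramsey, or, better, argue via the specific forbidden subgraphs). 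Concretely: take an independent set $I\subseteq N$ with $|I|$ as large as possible; the subgraph induced on $\{x\}\cup I$ is $K_{1,|I|}$, and together with the part of $C$ disjoint from its closed neighbourhood (at least $\alpha - 4(|I|+1)$ vertices) it contains an induced $K_{1,|I|}(s)$ with $s\geqslant\alpha-4(|I|+1)$. If $|I|\geqslant 5$ and $\alpha-4(|I|+1)$ is large, Lemma~\ref{36}(i) gives $\lambda_{\min}<-5$, a contradiction; quantitatively, $(|I|-4)(s-4)\leqslant 36$ with $s\geqslant\alpha-4(|I|+1)$ bounds $|I|$. Chasing the arithmetic, with $\alpha\geqslant 39$ this should force either $|I|\leqslant 4$ or $d$ itself to be small; combined with $\alpha(N)\geqslant$ (Ramsey bound in terms of $d$), one deduces $d\leqslant 16$.

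If the crude Ramsey bound on $\alpha(N)$ is not quite strong enough to push $d$ below $17$ for all $d$ up to the relevant range, I would instead argue exactly as in Claim~\ref{K2,3}: use the triangle-freeness of $N(x)$ to locate an induced $K_{2,1,1}$, $K_4$, $K_{2,3}$, or one of the $\tilde D^+_k$ inside the closed neighbourhood of $x$ (this is where $\omega=4$, not $\omega\leqslant 3$, matters: we may have small cliques but no $K_5$), and then bound $n_{R(H)}\leqslant 2\alpha(R(H))$ using $\rho(R(H))<2$ (Proposition~\ref{prop:one-large} and Theorem~\ref{spectralradius2}) together with the value of $s$ for which that $H(s)\in\mathcal F_{-5}$ from Table~\ref{18graphs}. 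Summing $n_H(d_{\max}+1)-2\varepsilon_H+n_{R(H)}$ against $n\geqslant 277$ yields a contradiction once $d_{\max}$ is in the remaining window, so $d_{\max}\leqslant 16$ after all, and Theorem~\ref{LS16} gives ${\rm rk}(S+5\mathbf I)\geqslant\frac{2n}{3}+1$.

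The main obstacle I anticipate is the valency bound: unlike the $\alpha\geqslant 49$ case, where a star $K_{1,7}$ alone together with $C$ already forces a contradiction, here with $\alpha$ only $\geqslant 39$ the star argument by itself may leave a gap for intermediate valencies, and one genuinely needs to exploit that the neighbourhood of a high-valency vertex is triangle-free to manufacture a forbidden configuration. Balancing $|I|$ (or the choice of forbidden subgraph $H\subseteq N(x)\cup\{x\}$) against the residual size of $C$ and against $n\geqslant 277$ is the delicate bookkeeping step; I expect it to come down to checking a handful of cases $d\in\{17,\dots,D_0\}$ for some explicit $D_0$ (plausibly $D_0\leqslant 38$, after which Theorem~\ref{LS49} via $\alpha([S])\geqslant 1+d_{\max}$ takes over), each disposed of by one application of Lemma~\ref{36} or Table~\ref{18graphs}.
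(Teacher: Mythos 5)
Your proposal has a genuine gap, and it sits exactly where you anticipated trouble. The central error is the claim that, because $\omega([S])=4$, the neighbourhood $N(x)$ of a high-valency vertex induces a triangle-free graph: $\omega([S])=4$ only forbids $K_4$ inside $N(x)$ (a triangle in $N(x)$ together with $x$ is just a $K_4$, which is allowed), so $N(x)$ may well contain triangles and your Ramsey/independent-set machinery for bounding $d_{\max}$ does not get off the ground. Even setting that aside, the arithmetic you propose does not close: with $\alpha=39$ and $|I|=5$ you get $s\geqslant\alpha-4(|I|+1)=15$, and $K_{1,5}(15)$ is harmless since the minimal forbidden graph is $K_{1,5}(41)$; similarly $|I|=6$ gives $s\geqslant 11$ with $(2)(7)=14\leqslant 36$. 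So no contradiction arises for any $|I|$ in the relevant range, and $d_{\max}$ cannot be pushed below $17$ this way. (A smaller slip: with $\alpha\geqslant 39$ a vertex outside $C$ can have $5$ neighbours in $C$, not $4$, since $(5-4)(34-4)=30\leqslant 36$; the paper's own proof uses the bound $5$.) Your overall strategy --- reduce to $d_{\max}\leqslant 16$ and invoke Theorem~\ref{LS16} --- is therefore not viable with the tools you describe, and leaves the whole window $17\leqslant d_{\max}\leqslant 47$ unhandled before Theorem~\ref{LS49} can take over.

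The paper's proof takes a different route and never bounds $d_{\max}$. It first invokes Theorem~\ref{LStrianglefree} to produce a triangle $\{x,y,z\}$ in $G$; since $\omega([S])=4$, the set of common non-neighbours of $x,y,z$ is independent, hence has at most $\alpha-1\leqslant 47$ vertices (using $\alpha\leqslant 48$ from Theorem~\ref{LS49}), so by pigeonhole some triangle vertex $x$ has valency at least $\lceil(n-47-3)/3+2\rceil\geqslant 78>\alpha$. Its neighbourhood then contains an edge, which yields an induced $B_1$, $K_{2,1,1}$, or $K_4$ (depending on whether the edge is isolated in $N_x$ or extends to a $K_{1,2}$ or $K_3$). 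Each of these, padded with the at least $\alpha-5n_H\geqslant 14$ (resp.\ $9$, $1$) vertices of $C$ having no neighbour in $H$, produces one of the forbidden graphs $B_1(14)$, $B_2(9)$-type configurations, or $K_4(5)$, contradicting $\alpha\geqslant 39$. The auxiliary graph $B_1$ of Lemma~\ref{forbidden2} is indispensable here and absent from your proposal; without it, the case of an isolated edge in $N_x$ (a $2K_2$ plus the apex $x$) cannot be excluded.
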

\begin{proof}
Let $\alpha:=\alpha([S])$. By Theorem \ref{LS49}, we may assume $\alpha\leqslant48$. Let $G$ be a graph in the switching class of $S$ with $\alpha(G)=\alpha$. Let $C$ be an independent set of $G$ of order $\alpha$. We may assume that every vertex outside $C$ has at most $\lfloor\frac{\alpha}{2}\rfloor$ neighbours in $C$. Note that $G$ contains a triangle say with vertices $x$, $y$ and $z$, by Theorem \ref{LStrianglefree}. The set $V:=\{u\in V(G)\setminus\{x,y,z\}\mid u\not\thicksim x, u\not\thicksim y, u\not\thicksim z\}$ is an independent set, and $|V|\leqslant\alpha-1\leqslant47$, as $\omega([S])=4$ and $\alpha\leqslant48$. Without loss of generality, we may assume that the valency of $x$, $d_x$, is at least
\begin{equation*}
\lceil\frac{n-|V|-3}{3}+2\rceil\geqslant\frac{277-47-3}{3}+2=78,
\end{equation*}
as $n\geqslant277$. This implies the following claim.

\begin{claim}\label{3graphs}
The graph $G$ contains one of the following graphs as an induced subgraph.
\begin{figure}[ht]
\centering
\begin{tikzpicture}
    \draw (1,-0.5) node {$B_1$};
    \draw (6,-0.5) node {$K_{2,1,1}$};
    \draw (10.5,-0.5) node {$K_4$};
    \tikzstyle{every node}=[draw,circle,fill=white,minimum size=2pt,
                            inner sep=0pt]
                            {every label}=[\tiny]

    \draw (0,0) node (11) [label=below:$ $] {}
        -- ++(90:1cm) node (12) [label=below:$ $] {}
        -- ++(-30:1cm) node (13) [label=below:$ $] {}
        -- ++(30:1cm) node (14) [label=below:$ $] {}
        -- ++(-90:1cm) node (15) [label=below:$ $] {};
    \draw  (13) -- (15);
    \draw  (11) -- (13);

    \draw (4.5,0.5) node (0a) [label=left:$ $] {}
         -- ++(20:1.5cm) node (0b) [label=above:$ $] {}
         -- ++(-20:1.5cm) node (0c) [label=right:$ $] {}
         -- ++(200:1.5cm) node (0d) [label=below:$ $] {}
         --(0a);
    \draw (0b) -- (0d);

    \draw (10,0) node (0e) [label=left:$ $] {}
         -- ++(90:1cm) node (0f) [label=left:$ $] {}
         -- ++(0:1cm) node (0g) [label=right:$ $] {}
         -- ++(270:1cm) node (0h) [label=right:$ $] {}
         --(0e);
    \draw (0e) -- (0g);
    \draw (0f) -- (0h);
\end{tikzpicture}
\end{figure}
\end{claim}

\noindent{\bf {Proof of Claim \ref{3graphs}:}} Let $N_x$ be the set of all neighbours of $x$ in $G$. Let $N$ denote the subgraph of $G$ induced on $N_x$.
Since $N_x\geqslant78>48\geqslant\alpha$, $N$ contains an edge $uv$.
If the edge $uv$ is isolated, then the subgraph induced by $N_x \setminus \{u, v\}$ must contain another edge, and $N$ contains a $2K_2$.
If the edge $uv$ is not isolated, then $N$ contains a $K_{1,2}$ or a $K_3$.
Putting back the vertex $x$ implies the claim.
\qed

\begin{claim}\label{claim38}
Let $H$ be the induced subgraph as in Claim $\ref{3graphs}$.
\begin{enumerate}[(i)]
  \item If $H$ is isomorphic to $B_1$, then $\alpha\leqslant38$;
  \item If $H$ is isomorphic to $K_{2,1,1}$, then $\alpha\leqslant30$;
  \item If $H$ is isomorphic to $K_4$, then $\alpha\leqslant28$.
\end{enumerate}
\end{claim}
\noindent{\bf {Proof of Claim \ref{claim38}:}} We first show $(i)$. Assume that $G$ contains an induced subgraph $H$ isomorphic to $B_1$ and $\alpha\geqslant39$. Note that every vertex outside $C$ has at most $5$ neighbours in $C$, as $K_{1,t}(39-t) \in \mathcal{F}_{-5}$ unless $t\leqslant5$, by Lemma \ref{36} $(i)$. Then, the number of vertices in $C$ that are neither vertices of $H$ nor have a neighbour in $H$ is at least $\alpha-5n_H\geqslant39-25=14$. By Lemma \ref{forbidden2}, we have $B_1(14)\in \mathcal{F}_{-5}$ and this gives a contraction. This shows case $(i)$.

$(ii)$ and $(iii)$ follow in similar manner as $(i)$. In $(ii)$, we use that $K_{1,8}(14)$, $K_{1,7}(17)$, $K_{1,6}(23)$ and $K_{2,1,1}(11)$ are all in $\mathcal{F}_{-5}$, by Lemma \ref{36} $(i)$ and Lemma \ref{forbidden}. In $(iii)$, we use that $K_{1,8}(14)$, $K_{1,7}(17)$, $K_{1,6}(23)$ and $K_4(5)$ are all in $\mathcal{F}_{-5}$, by Lemma \ref{36} $(i)$ and Lemma \ref{forbidden}. This shows the claim.
\qed

This implies that, if $\alpha\leqslant 48$, then we have $\alpha\leqslant 38$ or {\rm rk}$(S+5\mathbf{I})\geqslant\frac{2n}{3}+1$. By Theorem \ref{LS49}, the proof of Theorem~\ref{LS38} is now finished.
\end{proof}

Once the bound $\alpha([S]) \leqslant 39$ is shown, we may use it again to further slash this bound, as the following theorem shows.

\begin{theorem}
Let $S$ be a Seidel matrix of order $n\geqslant277$ with $\lambda_{\min}(S)=-5$ and $\omega([S])=4$. If the independence number $\alpha([S])$ of $[S]$ satisfies $\alpha([S])\geqslant29$, then {\rm rk}$(S+5\mathbf{I})\geqslant\frac{2n}{3}+1$.
\end{theorem}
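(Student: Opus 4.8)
The plan is to bootstrap from Theorem~\ref{LS38}. By Theorems~\ref{LS49} and~\ref{LS38} we may assume $29\leqslant\alpha:=\alpha([S])\leqslant 38$, and by Theorem~\ref{LStrianglefree} we may assume that no graph in $[S]$ is triangle-free, so every graph in $[S]$ contains a triangle. Fix $G\in[S]$ with $\alpha(G)=\alpha$ and a maximum independent set $C$; as in the proof of Theorem~\ref{LS38} we may assume every vertex outside $C$ has at most $\lfloor\alpha/2\rfloor$ neighbours in $C$, and then Lemma~\ref{36}$(i)$, applied with $s=\alpha-r\geqslant\lceil\alpha/2\rceil$, forces this number to be at most $5$, since $(6-4)(\alpha-6-4)=2(\alpha-10)>36$ once $\alpha\geqslant 29$. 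Pick a triangle $\{x,y,z\}$ of $G$ and let $V$ be the set of vertices lying in neither $\{x,y,z\}$ nor its neighbourhood. As in the proof of Theorem~\ref{LS38}, the set $V$ is independent --- if $u\sim u'$ in $V$ then $u,u',x',y',z'$ induce a $K_5$ in the switching graph $\mathcal{Sw}(S)$, contradicting $\omega([S])=4$ --- so $|V|\leqslant\alpha-1\leqslant 37$, and hence some vertex of the triangle, say $x$, has valency $d_x\geqslant\lceil\frac{n-|V|-3}{3}+2\rceil\geqslant 81$.

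The new step is to examine the subgraph $N$ of $G$ induced on the set $N_x$ of neighbours of $x$: it has $d_x\geqslant 81$ vertices, $\alpha(N)\leqslant\alpha\leqslant 38$, and $\omega(N)\leqslant\omega(G)-1\leqslant 3$. If $N$ contains a triangle, then $x$ together with it induces a $K_4$; at most $5\cdot 4=20$ vertices of $C$ lie in, or are adjacent to, this $K_4$, so at least $\alpha-20\geqslant 9>5$ vertices of $C$ remain and we obtain an induced $K_4(s)$ with $s\geqslant 5$, contradicting $K_4(5)\in\mathcal{F}_{-5}$. So $N$ is triangle-free; since $|N|\geqslant 81>2\alpha(N)$, it cannot be a disjoint union of edges and isolated vertices (its independence number would exceed $|N|/2>38$), so $N$ has two incident edges, i.e.\ an induced $P_3$, and adding $x$ yields an induced $K_{2,1,1}$ of $G$. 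With the same count of $20$ blocked vertices of $C$ and $K_{2,1,1}(11)\in\mathcal{F}_{-5}$, this only gives $\alpha\leqslant 30$, so it remains to exclude $\alpha\in\{29,30\}$.

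The refinement is to try to choose the induced $P_3$, say $a$--$b$--$c$, inside $N$ so that one of its vertices lies in $C$: such a vertex blocks only itself, so at most $1+3\cdot 5=16$ vertices of $C$ lie in or meet $\{x,a,b,c\}$, leaving at least $\alpha-16\geqslant 13>11$ spare vertices of $C$ and contradicting $K_{2,1,1}(11)\in\mathcal{F}_{-5}$. If no such $P_3$ exists, then every vertex of $N\cap C$ (a set of size $\leqslant 5$) has at most one neighbour in $N$, and that neighbour lies outside $C$ and has degree $1$ in $N$; thus $N\cap C$ lies entirely in components of $N$ of order $\leqslant 2$, and after deleting those components we are left with a triangle-free graph $N^{\circ}$, disjoint from $C$, each component of which has order $\geqslant 3$ and which still satisfies $|N^{\circ}|\geqslant 2\alpha(N^{\circ})+5$. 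Hence $G$ contains an induced $K_{2,1,1}=\{x,a,b,c\}$ with $a,b,c\notin C$ and, lest $G$ acquire an induced $K_4$, no vertex of $C$ is adjacent to all of $\{x,a,b\}$ or all of $\{x,b,c\}$. This last configuration, for $\alpha\in\{29,30\}$, is where the real work lies: the crude ``$\alpha-5n_H$ spare vertices of $C$'' bound is no longer enough, and one must either count the blocked vertices of $C$ more finely --- using the forbidden-$K_4$ restriction above together with the fact that $x$ has at least $80$ further neighbours arranged in the triangle-free components of $N^{\circ}$ --- or exploit this abundance of neighbours of $x$ to build inside $\{x\}\cup N_x$ an induced subgraph whose switching class contains a member of $\mathcal{F}_{-5}$ (for instance one related to $B_2(9)$ or $K_{2,3}(19)$). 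I expect this to require a case analysis of the possible triangle-free structures that $N$ can carry while keeping $\alpha(N)$ small, and it is the main obstacle to finishing the proof.
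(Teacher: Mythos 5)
Your reduction is the same as the paper's: assume $29\leqslant\alpha\leqslant 38$ via Theorem~\ref{LS38}, find a triangle via Theorem~\ref{LStrianglefree}, get a vertex $x$ of valency at least $81>2\alpha$, and conclude that $G$ contains an induced $K_4$ or $K_{2,1,1}$. Your $K_4$ case is fine (the paper even gets away with less: one spare vertex of $C$ already yields a $K_5$ in the switching class). But your argument is genuinely incomplete in the $K_{2,1,1}$ case, as you yourself acknowledge: the crude count $\alpha-5n_H\geqslant\alpha-20$ only beats the threshold $11$ of $K_{2,1,1}(11)\in\mathcal{F}_{-5}$ when $\alpha\geqslant 31$, and your proposed refinement (choosing the $P_3$ so that one of its vertices lies in $C$, then analysing the components of $N$ meeting $C$) does not close the remaining cases $\alpha\in\{29,30\}$ and is not the route the paper takes.

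The missing idea is a double counting on the $K_{2,1,1}$ itself, using the forbidden subgraph $K_{2,1,1}(11)$ in the \emph{opposite} direction from how you use it. Since $K_{2,1,1}(11)\in\mathcal{F}_{-5}$, at most $10$ vertices of $C$ are free of $H\cong K_{2,1,1}$, so at least $\alpha-10\geqslant 19$ vertices of $C$ have a neighbour in $H$ (and no vertex of $H$ can lie in $C$). On the other hand the total number of $(H,C)$-incidences is at most $5\cdot 4=20$, so at least $18$ of those $C$-vertices are adjacent to exactly one vertex of $H$; since the two valency-$2$ vertices of $K_{2,1,1}$ absorb at most $10$ of these private neighbours, some valency-$3$ vertex $u$ of $H$ has at least two private neighbours $v,w\in C$. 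Then $H\cup\{v,w\}$ induces $B_2$, and the lower bound $\alpha-20\geqslant 9$ on the number of free vertices of $C$ (which are also nonadjacent to $v,w$ because $C$ is independent) produces an induced $B_2(9)$, contradicting Lemma~\ref{forbidden2}. This is exactly the step you gesture at when you mention ``one related to $B_2(9)$'' but do not carry out; without it the theorem is only proved for $\alpha\geqslant 31$.
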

\begin{proof}
Let $\alpha:=\alpha([S])$. By Theorem \ref{LS38}, we may assume $\alpha\leqslant38$. Take a graph $G$ in the switching class of $S$ with $\alpha(G)=\alpha$. Let $C$ be an independent set of $G$ of order $\alpha$. We may assume that every vertex outside $C$ has at most $\lfloor\frac{\alpha}{2}\rfloor$ neighbours in $C$. By Theorem \ref{LStrianglefree}, it follows that $G$ contains a triangle, say with vertices $x$, $y$ and $z$. The set $V:=\{u\in V(G)\setminus\{x,y,z\}\mid u\not\thicksim x, u\not\thicksim y, u\not\thicksim z\}$ is an independent set, and $|V|\leqslant\alpha-1\leqslant37$, as $\omega([S])=4$ and $\alpha\leqslant38$. Without loss of generality, we may assume the valency $d_x$ of $x$ is at least
\begin{equation*}
\lceil\frac{n-|V|-3}{3}+2\rceil\geqslant\lceil\frac{277-37-3}{3}+2\rceil=81>2\alpha,
\end{equation*}
as $n\geqslant277$.
Therefore the subgraph $N_x$ induced by the neighbors of $x$ contains an edge which is not isolated, and
this implies that $N_x$ (and $G$) contains $K_{2,1,1}$ or $K_4$ as an induced subgraph.

\begin{claim}\label{28}
Assume that $G$ contains an induced subgraph $H$ isomorphic to $K_{2,1,1}$ or $K_4$. Then the following hold.
\begin{enumerate}[(i)]
  \item If $H$ is isomorphic to $K_{2,1,1}$, then $\alpha\leqslant28$;
  \item If $H$ is isomorphic to $K_4$, then $\alpha\leqslant24$.
\end{enumerate}
\end{claim}
\noindent{\bf Proof of Claim \ref{28}:} $(i)$ Assume that $G$ contains an induced subgraph $H$ isomorphic to $K_{2,1,1}$ and $\alpha\geqslant29$. Then every vertex outside $C$ has at most $5$ neighbours in $C$, as $K_{1,8}(14)$, $K_{1,7}(17)$ and $K_{1,6}(23)$ are all in $\mathcal{F}_{-5}$, by Lemma \ref{36} $(i)$. Note that the number of vertices in $C$, that are neither in $H$ nor have a neighbour in $H$, is at most $10$, as $K_{2,1,1}$
Then all vertices of $H$ have at least $\alpha-10\geqslant 29-10=19$ and at most $5n_H=20$ neighbours in $C$; in particular, none of the vertices in $H$ belongs to $C$.
It follows that there exists a vertex $u$ of $H$ with valency $3$ that has two neighbours, say $v$ and $w$, in $C$ that are not adjacent to any of the other three vertices of $H$. This means that the subgraph $H'$ induced on $V(H)$ $\cup$ $\{v,w\}$ is isomorphic to $B_2$ of Lemma \ref{forbidden2}. As $B_2(9)\in\mathcal{F}_{-5}$, by Lemma \ref{forbidden2}, this gives a contraction. This shows case $(i)$.

$(ii)$ Assume that $G$ contains an induced subgraph $H$ isomorphic to $K_4$ and $\alpha\geqslant25$. Note that every vertex outside $C$ has at most $6$ neighbours in $C$, as $K_{1,9}(12)$, $K_{1,8}(14)$ and $K_{1,7}(17)$ are in $\mathcal{F}_{-5}$, by Lemma \ref{36} $(i)$.
So, as $\alpha-6n_H\geqslant25-6\times 4=1$, there is at least 1 vertex in $C$ that is neither in $H$ nor has a neighbour in $C$. This implies that $G$ contains $K_4(1)$ as an induced subgraph, which gives a contradiction, as $\omega([S])=4$. This finishes the proof of case $(ii)$.
\qed

This implies that, if $\alpha\leqslant 38$, then we have $\alpha\leqslant 28$ or {\rm rk}$(S+5\mathbf{I})\geqslant\frac{2n}{3}+1$. Now the theorem immediately follows from Theorem \ref{LS38}.
\end{proof}

\section{Pillar}\label{sec:pillar}

Let $S$ be a Seidel matrix, and $\omega:=\omega([S])$ be the clique number of $[S]$.
Take a graph $G$ in the switching class of $S$ with $\omega(G)=\omega$.

Let $B:=\{x_1,\ldots,x_{\omega}\}$ be the vertex set of an $\omega$-clique inside $G$. We call $B$ a base of order $\omega$. For $U\subseteq B$, the pillar $\mathcal{P}_U$ with respect to $B$ is the set $\{y\notin B\mid y\thicksim u,$ if $u\in U$, $y\not\thicksim u,$ if $u\in B-U\}$ of vertices. Without loss of generality, we may assume $|U|\leqslant\lfloor\frac{\omega}{2}\rfloor$ and if $|U|=\frac{\omega}{2}$, then $x_1\in U$. Note that $\mathcal{P}_\emptyset = \emptyset$, as otherwise, $\omega([S])\geqslant\omega+1$. Let $p_U$ denote the cardinality of $\mathcal{P}_U$. Let $p_{\omega,t}$ denote the maximum cardinality of $\mathcal{P}_U$, where $|U|=t\leqslant\lfloor\frac{\omega}{2}\rfloor$. We call that $\mathcal{P}_U$ is a $(\omega,t)$ pillar when $|U| = t$.

Let $S$ be a Seidel matrix with $\lambda_{\min}(S)\geqslant-5$. In the rest of the article we will show that for the case $\omega([S]) = 4$, the Theorem \ref{maintheorem} is true.

Let $S$ be a Seidel matrix of order $n\geqslant277$ with $\lambda_{\min}(S)=-5$ and $\omega([S])=4$. Take a graph $G$ in $[S]$ with vertex set $V(G)$ and $\omega(G)=4$. Let $\{x_1,\ldots,x_4\}$ be a $4$-base. Let $S':=S(G)$. Note that $S'+5\mathbf{I}$ is positive semidefinite, so there exists a map $\tau:\{1,\ldots,n\}\mapsto\mathbb{R}^t:x\mapsto \hat{x}$ for some positive integer $t$ such that $\langle\hat{x},\hat{y}\rangle=(S'+5\mathbf{I})_{xy}$ for $x,y\in V(G)$. Let $W$ be the vector space spanned by $\{\hat{x_1},\ldots,\hat{x_4}\}$.

\subsection{A new bound for \texorpdfstring{$p_{4,1}$}{p(4,1)}}

Let $y\in \mathcal{P}_{\{x_1\}}$. Decompose $\hat{y}$ into $\hat{y}=h_1+c_y$ such that $h_1\in W$ and $c_y\in W^\bot$. Then, as $\langle\hat{y},\hat{x_1}\rangle=-1$ and $\langle\hat{y},\hat{x_i}\rangle=1$ for $i=2,3,4$, it follows that $h_1=\frac{1}{3}(\hat{x_2}+\hat{x_3}+\hat{x_4})$ and $\langle c_y,c_y\rangle=4$. Let $y'\in \mathcal{P}_{\{x_1\}}-\{y\}$. Then $\langle c_y,c_{y'}\rangle=0$.
Likewise, for $u\in \mathcal{P}_{\{x_2\}}$, $\hat{u}$ can be decomposed into $\hat{u}=h_2+c_u$ where $h_2 = \frac{1}{3}(\hat{x_1}+\hat{x_3}+\hat{x_4})\in W$ and $c_u\in W^\bot$.
Now $u\sim y$ implies that $\langle c_u,c_y\rangle=-\frac{4}{3}$ and $u\not\sim y$ implies that $\langle c_u,c_y\rangle=\frac{2}{3}$.

Lin and Yu \cite{lin2018equiangular} showed that if $p_{4,1}\geqslant25$, then there exists exactly one $(4,1)$ pillar having more than one vertex. The next theorem gives a similar result.

\begin{theorem}\label{(4,1)pillar}
Let $S$ be a Seidel matrix with $\lambda_{\min}(S)=-5$ and $\omega([S])=4$. If there exists an edge between two different $(4,1)$ pillars, then each of the other two $(4,1)$ pillars contains at most $19$ vertices.
\end{theorem}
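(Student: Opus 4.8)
The plan is to exploit the geometric setup already in place: the vectors $c_y \in W^\perp$ associated to vertices $y$ in the $(4,1)$-pillars, which satisfy $\langle c_y, c_y\rangle = 4$, with inner product $0$ between two vertices in the same pillar, and inner product $-\tfrac43$ (if adjacent) or $\tfrac23$ (if non-adjacent) between vertices in different pillars. Say the edge guaranteed by hypothesis runs between a vertex $y^* \in \mathcal{P}_{\{x_i\}}$ and a vertex $u^* \in \mathcal{P}_{\{x_j\}}$ with $i \neq j$; I want to bound the sizes of the pillars $\mathcal{P}_{\{x_k\}}$ and $\mathcal{P}_{\{x_\ell\}}$ for $\{k,\ell\} = \{1,2,3,4\}\setminus\{i,j\}$. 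First I would fix one such pillar, say $\mathcal{P}_{\{x_k\}}$, and consider the Gram matrix of the vectors $\{c_{y^*}, c_{u^*}\} \cup \{c_z : z \in \mathcal{P}_{\{x_k\}}\}$, together possibly with one more vertex from $\mathcal{P}_{\{x_\ell\}}$; positive semidefiniteness of this Gram matrix (it is a submatrix of $S'+5\mathbf I$ up to the fixed scaling coming from the $W$-components) is the constraint to extract from.

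The concrete mechanism I expect to use is a forbidden-subgraph / rank argument rather than a raw eigenvalue estimate. Each vertex $z \in \mathcal{P}_{\{x_k\}}$ is adjacent to exactly one of $x_1,x_2,x_3,x_4$, namely $x_k$, so the subgraph induced on $\{x_1,x_2,x_3,x_4\} \cup (\text{many } z\text{'s from } \mathcal{P}_{\{x_k\}})$ together with $y^*$ and $u^*$ has a very rigid structure: $\{x_i, x_j, y^*, u^*\}$ spans a known small graph (two base vertices, joined; $y^*$ attached only to $x_i$ among the base; $u^*$ attached only to $x_j$; and $y^* \sim u^*$), and each added $z$ attaches to $x_k$ only among $\{x_1,\dots,x_4\}$ while attaching to $y^*$ and $u^*$ in one of the four adjacency patterns. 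I would split $\mathcal{P}_{\{x_k\}}$ according to this pattern into at most four classes. Within a class, the $z$'s are pairwise non-adjacent (same pillar) and have identical attachment to $\{x_i,x_j,x_k,y^*,u^*\}$, so one obtains an induced $K_{1,r}(s)$ or a cone-over-small-graph configuration attached to this fixed gadget; then Lemma \ref{36} (parts (i) and (ii)) together with the graphs of Lemma \ref{forbidden}, Lemma \ref{forbidden2} bound $r+s$ in each class. Summing the (at most four) class bounds yields a bound of the form $p_{\{x_k\}} \le 19$. I would do the same for $\mathcal{P}_{\{x_\ell\}}$, which is symmetric.

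The main obstacle will be the bookkeeping of the adjacency patterns: the bound $19$ is evidently tight enough that a crude "four classes each of size at most $5$" estimate ($=20$) is off by one, so one of the four pattern-classes must be shown to be empty or to have a strictly smaller bound. I expect this to come from a combined obstruction — e.g., a $z$ that is non-adjacent to both $y^*$ and $u^*$, together with $x_i$ (or $x_j$) and $y^*$ and $u^*$ and several other $z$'s of the same type, produces one of the larger forbidden graphs (a $\widetilde D^+$ or $K_{2,3}$-type configuration, or one of $B_1, B_2$), forcing that class to be small; while a $z$ adjacent to exactly one of $y^*, u^*$ sees a triangle-plus-pendant structure that is controlled by the $K_{2,1,1}(11)$ or $K_4(5)$ forbidden graphs. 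Careful case analysis of which of $B_1(14), B_2(9), K_{2,1,1}(11), K_{2,3}(19), K_{1,r}(s)$ is triggered in each of the four patterns, choosing the strongest applicable one, should deliver exactly the claimed bound $19$. A secondary technical point is to ensure that the geometric inner-product data $\langle c_u, c_y\rangle \in \{-\tfrac43, \tfrac23\}$ is genuinely equivalent to the combinatorial adjacency inside $\mathcal{Sw}(S)$ so that "forbidden subgraph of $G$" statements can be invoked verbatim; this follows from the dictionary between $S'+5\mathbf I$ and the Gram matrix set up just before the theorem.
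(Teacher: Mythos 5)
Your setup is the right one --- the vectors $c_z \in W^\perp$ with $\langle c_z,c_z\rangle = 4$, pairwise orthogonal within a pillar, and inner product $-\frac43$ or $\frac23$ across pillars, and the four-way classification of $\mathcal{P}_{\{x_k\}}$ by adjacency pattern to the two endpoints of the given edge is exactly the partition $(\alpha_{00},\alpha_{01},\alpha_{10},\alpha_{11})$ used in the paper. But the mechanism you propose for extracting the bound --- bound each of the four classes separately by a forbidden induced subgraph and sum --- cannot reach $19$, and this is not a bookkeeping issue. The classes are coupled: the paper's proof works by noting that the partition is equitable for the Gram matrix $M$ of all the $c$-vectors together with $c_u,c_v$, so the $6\times 6$ quotient matrix must be positive semidefinite, and the three determinant inequalities $\det(Q_5)\geqslant 0$, $\det(Q_6)\geqslant 0$, $\det(Q)\geqslant 0$ bound \emph{linear combinations} of the $\alpha$'s simultaneously (e.g.\ $\alpha_{00}+4\alpha_{01}+\alpha_{10}+4\alpha_{11}\leqslant 36$). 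Taken one class at a time these constraints are much weaker: with the other three classes empty, the Gram matrix with $\alpha_{00}=12$ is genuinely positive semidefinite (the Schur complement is $\bigl(\begin{smallmatrix}8/3 & -8/3\\ -8/3 & 8/3\end{smallmatrix}\bigr)\succeq 0$), and similarly $\alpha_{01}=\alpha_{10}=8$ and $\alpha_{11}=3$ are each individually feasible. Since these configurations are realizable with $\lambda_{\min}\geqslant -5$, no forbidden-subgraph argument can certify per-class bounds better than $12,8,8,3$, whose sum is $31$, far above $19$. Your guess that the issue is shaving ``$4\times 5=20$'' down to $19$ is also off: the extremal configuration has class sizes $(9,5,5,0)$, so one class alone already exceeds $5$.

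A secondary problem is that the forbidden graphs you want to invoke ($B_1(14)$, $B_2(9)$, $K_{2,1,1}(11)$, $K_{2,3}(19)$, $K_{1,r}(s)$) are unions of a fixed small graph with \emph{isolated} vertices and edges, whereas your per-class configuration is a six-vertex gadget with an independent set all of whose members attach to the same nonempty subset of the gadget; none of the listed lemmas applies verbatim, so you would need fresh eigenvalue computations for these new configurations in any case. The fix is to abandon the summation of class bounds and instead write down the quotient matrix of the full Gram matrix with respect to your four classes plus $\{u^*\}$ and $\{v^*\}$, impose nonnegativity of its eigenvalues via the three determinant conditions, and then do the algebra (the paper substitutes $\beta_1=\alpha_{00}+4\alpha_{11}$, $\beta_2=\alpha_{01}+\alpha_{10}$ and uses $\alpha_{01}\alpha_{10}\leqslant(\beta_2/2)^2$) to deduce $4\beta_1+\beta_2\leqslant 48$ and $\beta_2\leqslant 10$, whence $p\leqslant\lfloor(48+30)/4\rfloor=19$.
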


\begin{proof}
Without loss of generality, we may assume that $u\in \mathcal{P}_{\{x_2\}}, v\in \mathcal{P}_{\{x_3\}}$ and $u$ and $v$ are adjacent. Let $\mathcal{P}_{\{x_1\}}=\{y_1,\ldots,y_p\}$, where $p$ is the number of vertices in $\mathcal{P}_{\{x_1\}}$. Write
\begin{align*}
  \hat{y_i}=h_1+c_{y_i}, & \text{~where } h_1=\frac{1}{3}(\hat{x_2}+\hat{x_3}+\hat{x_4}) \text{ for } i=1,\ldots,p;\\
  \hat{u}=h_2+c_u, & \text{~where } h_2=\frac{1}{3}(\hat{x_1}+\hat{x_3}+\hat{x_4}); \\
  \hat{v}=h_3+c_v, & \text{~where } h_3=\frac{1}{3}(\hat{x_1}+\hat{x_2}+\hat{x_4}).
\end{align*}

The Gram matrix $M$ of $c_{y_1},\ldots,c_{y_p},c_u,c_v$ is
\begin{equation*}
M= \left(
\begin{array}{c@{}c@{}}
 4\mathbf{I}_p  & B \\
 B^T & \begin{array}{rr}
 4 & -\frac{4}{3}\\
 -\frac{4}{3} &4
 \end{array}\\
 \end{array}\right),
\end{equation*}
where $B$ is a $(p\times2)$-matrix with each row is one of $(\frac{2}{3},\frac{2}{3})$, $(\frac{2}{3},-\frac{4}{3})$, $(-\frac{4}{3},\frac{2}{3})$, and $(-\frac{4}{3},-\frac{4}{3})$, and we denote the number of occurrences of these rows by $\alpha_{00}$, $\alpha_{01}$, $\alpha_{10}$, and $\alpha_{11}$, respectively.

Let $S_1$, $S_2$, $S_3$, and $S_4$ be the sets of rows of $M$ which have an occurrence of $(\frac{2}{3},\frac{2}{3})$, $(\frac{2}{3},-\frac{4}{3})$, $(-\frac{4}{3},\frac{2}{3})$, and $(-\frac{4}{3},-\frac{4}{3})$, respectively. Let $\pi=\{S_1,\ldots,S_4, \{u\},\{v\}\}$ be a partition of $\mathcal{P}_{\{x_1\}}\cup\{u\}\cup\{v\}$. Observe that $\pi$ is an equitable partition of $M$.
The quotient matrix of $M$ with respect to $\pi$ is given by
\begin{equation*}
Q= \frac{1}{3}
\begin{pmatrix}
 12 & 0 & 0 & 0 & 2 & 2 \\
 0 & 12 & 0 & 0 & 2 & -4 \\
 0 & 0 & 12 & 0 & -4 & 2 \\
 0 & 0 & 0 & 12 & -4 & -4 \\
 2\alpha_{00} & 2\alpha_{01} & -4\alpha_{10} & -4\alpha_{11} & 12 & -4 \\
 2\alpha_{00} & -4\alpha_{01} & 2\alpha_{10} & -4\alpha_{11} & -4 & 12
 \end{pmatrix}.
\end{equation*}
As $M$ is a positive semidefinite, all the eigenvalues of $Q$ must be non-negative, by Lemma \ref{quotient}.

Let $Q_i$ be the matrix obtained from $Q$ by removing the $i^{th}$ row and column of $Q$ for $i=5,6$. Then eigenvalue of $Q$ are all non-negative if and only if $\det(Q_5)\geqslant0$,  $\det(Q_6)\geqslant0$, and  $\det(Q)\geqslant0$, as $4\mathbf{I}$ is positive definite.

We find that $\det(Q_5)\geqslant0$ if and only if
\begin{equation}\label{Q5}
  \alpha_{00}+4\alpha_{01}+\alpha_{10}+4\alpha_{11}\leqslant36,
\end{equation}
and $\det(Q_6)\geqslant0$ if and only if
\begin{equation}\label{Q6}
  \alpha_{00}+\alpha_{01}+4\alpha_{10}+4\alpha_{11}\leqslant36.
\end{equation}
Formulae (\ref{Q5}) and (\ref{Q6}) imply
\begin{equation}\label{Q5Q6}
  \alpha_{00}+4\alpha_{11}\leqslant36-\frac{5}{2}(\alpha_{01}+\alpha_{10}).
\end{equation}
Furthermore, $\det(Q)\geqslant0$ if and only if
\begin{equation}\label{Q}
  3\alpha_{01}\alpha_{10}+(3(\alpha_{00}+4\alpha_{11})-44)(\alpha_{01}+\alpha_{10})-32(\alpha_{00}+4\alpha_{11}-12)\geqslant0.
\end{equation}

Define $\beta_1:=\alpha_{00}+4\alpha_{11}$ and $\beta_2:=\alpha_{01}+\alpha_{10}$. As $\alpha_{01}\alpha_{10}\leqslant(\frac{\beta_2}{2})^2$, equation (\ref{Q}) implies
\begin{equation}\label{simpleQ}
\frac{1}{4}(3\beta_2-32)(4\beta_1+\beta_2-48)\geqslant0.
\end{equation}
Equation (\ref{Q5Q6}) give
\begin{equation}\label{simpleQ5Q6}
  \beta_1\leqslant36-\frac{5}{2}\beta_2.
\end{equation}
We need to consider two cases $\beta_2\geqslant11$ and $\beta_2\leqslant10$. If $\beta_2\geqslant11$, then (\ref{simpleQ}) combined with (\ref{simpleQ5Q6}) gives
\begin{equation*}
  \frac{3}{4}(3\beta_2-32)(32-3\beta_2)\geqslant0.
\end{equation*}
This is a contradiction.

So $\beta_2\leqslant10$. Now (\ref{simpleQ}) implies $4\beta_1+\beta_2\leqslant48$. Hence, $p = \alpha_{00} + \alpha_{01} + \alpha_{10} + \alpha_{11}\leqslant\beta_1+\beta_2=\frac{(4\beta_1+\beta_2)+3\beta_2}{4}\leqslant\lfloor\frac{48+3\times10}{4}\rfloor=19$. This concludes the proof of this theorem.
\end{proof}
\begin{remark}
With some extra calculations, it can be shown that $p=19$ implies that $\alpha_{00}=9$, $\alpha_{01}=\alpha_{10}=5$ and $\alpha_{11}=0$.
This result can also be obtained by semidefinite integer programming if we follow the similar approach in Lin-Yu~ \cite{lin2018equiangular}.
\end{remark}

\subsection{A bound for \texorpdfstring{$p_{4,2}$}{p(4,2)}}
In this subsection, we are going to bound the order of a $(4,2)$-pillar.

Let $S$ be a Seidel matrix of order $n$ and $\lambda_{\min}(S)=-5$. Assume $\omega([S])=4$. Let $B:=\{x_1,\ldots,x_4\}$ be a $4$-base and take a graph $G$ in $[S]$ such that $\{x_1,\ldots,x_4\}$ is a clique.

Consider the $\{x_1,x_2\}$-pillar $\mathcal{P}_{\{x_1,x_2\}}$ with respect to $B$, say with order $p$. Let $G_{\mathcal{P}}$ be the subgraph of $G$ induced on $\mathcal{P}=\mathcal{P}_{\{x_1,x_2\}}$. Now switch $G$ with respect to $\{x_1,x_2\}$ to obtain $G_{sw}(\{x_1,x_2\})$. So the subgraph induced $\{x_1,\ldots,x_4\}\cup\mathcal{P}$ is $G_{\mathcal{P}}\dot\cup2K_2$. Note that $G_{\mathcal{P}}$ is triangle-free and $\alpha(G_{\mathcal{P}})\leqslant\alpha([S])-2$.

We will show the following result, and, as a corollary, we obtain a bound of the order of a $(4,2)$-pillar.

\begin{theorem}\label{pre(4,2)pillar}
Let $G$ be a triangle-free graph with order $n_G$ such that $\lambda_{\min}(S(G\dot\cup2K_2))\geqslant-5$. Assume further that $\alpha(G)\leqslant26$. For a vertex $x$ of $G$, let $a_x$ be the valency of $x$ in $G$. Then the following hold:
\begin{enumerate}[(i)]
  \item $n_G\leqslant68$;
  \item If $n_G\geqslant66$, then there exists an edge $xy$ in $G$ such that $a_x+a_y\leqslant20$.
\end{enumerate}
\end{theorem}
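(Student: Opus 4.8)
The plan is to translate the hypothesis on $\lambda_{\min}$ into a positive-semidefiniteness statement about $A(G)$ alone, and then run a case analysis on $d_{\max}(G)$.

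First, the reformulation. Realise $S(G\dot\cup 2K_2)+5\mathbf{I}$ as a Gram matrix $z\mapsto\hat z$, which is possible precisely because $\lambda_{\min}\geqslant-5$. The four vectors attached to the two copies of $K_2$ have positive-definite $4\times4$ Gram matrix, so they span a $4$-dimensional space $W$; since every vertex of $G$ has Seidel value $+1$ to all four of them, every $\hat y$ with $y\in V(G)$ has the \emph{same} orthogonal projection $h$ onto $W$, and a short computation gives $\langle h,h\rangle=\tfrac23$. Writing $\hat y=h+c_y$ with $c_y\in W^{\perp}$, the Gram matrix of $\{c_y:y\in V(G)\}$ is exactly $4\mathbf{I}-2A(G)+\tfrac13\mathbf{J}$, so the hypothesis is \emph{equivalent} to $4\mathbf{I}-2A(G)+\tfrac13\mathbf{J}\succeq 0$. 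This is the same quotient/Gram computation already used to prove Lemma~\ref{36}: evaluating it on the principal submatrix indexed by $\{x\}\cup N(x)\cup I$, with $I$ an independent set of $R_G(\{x\}\cup N(x))$, reproduces $(d_x-4)(|I|+4)\leqslant 36$, i.e.\ the forbidden graphs $K_{1,r}(s,2)$ of Lemma~\ref{36}(i) with $t=2$. Two consequences I would single out: (a) $d_{\max}(G)\leqslant13$ — a vertex of degree $\geqslant14$ gives the induced subgraph $K_{1,14}\dot\cup 2K_2$, which has $\lambda_{\min}(S)<-5$ — and, more precisely, $\alpha\bigl(R_G(\{x\}\cup N(x))\bigr)\leqslant\lfloor 36/(d_x-4)\rfloor-4$ whenever $d_x\geqslant5$ (this number is $32,14,8,5,3,2,1,0,0$ for $d_x=5,\dots,13$); (b) since $\tfrac13\mathbf{J}$ is a rank-one positive-semidefinite perturbation, Weyl's inequality yields $\eta_2(A(G))\leqslant2$ (hence, by interlacing, $\eta_2(A(H))\leqslant 2$ for every induced subgraph $H$), and testing against the Perron vector of $A(G)$ gives $\rho(G)\leqslant2+\tfrac{n_G}{6}$.

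For part (i), if $\rho(G)\leqslant2$ then $G$ is a forest of Smith subgraphs (Theorem~\ref{spectralradius2}) and $n_G\leqslant2\alpha(G)+13\leqslant65$, so assume $\rho(G)>2$. By Proposition~\ref{prop:one-large} exactly one component $G_2$ has $\rho>2$; pick $x\in V(G_2)$ of maximum degree and estimate $n_G=1+d_x+|N_2(x)|+|R_G(\{x\}\cup N(x))|$. When $d_x\geqslant5$ one has $\rho(\{x\}\cup N(x))=\sqrt{d_x}>2$, so $\rho(R_G(\{x\}\cup N(x)))<2$ by Proposition~\ref{prop:one-large} and the last summand is at most $2\alpha(R_G(\{x\}\cup N(x)))$, which is small by consequence (a) (and the summand vanishes for $d_x\in\{12,13\}$); the first summand is $\leqslant 13$; the middle summand, the second neighbourhood of $x$, I would control by combining that for each $v\in N(x)$ the graph induced on $\{x,v\}\cup(N(v)\cap N_2(x))$ is a star (so $K_{1,r}(s,2)$ applies to it), that $G_{N_2(x)}$ is triangle-free with $\alpha(G_{N_2(x)})\leqslant\alpha(G)-1\leqslant25$, and that $\eta_2$ of the relevant induced subgraphs is $\leqslant2$. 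Carrying out the count in each degree band ($d_{\max}\leqslant4$; $5\leqslant d_{\max}\leqslant8$; $9\leqslant d_{\max}\leqslant11$; $d_{\max}\in\{12,13\}$) gives $n_G\leqslant68$ in every case. For part (ii): since $\rho(G)\leqslant2$ would force $n_G\leqslant65$, we are in the case $\rho(G)>2$; if in addition every edge $xy$ had $a_x+a_y\geqslant21$ then, all degrees being $\leqslant13$, every non-isolated vertex would have degree $\geqslant8$, and feeding this lower bound back into the count of (i) (together with $\rho(G)\leqslant2+n_G/6$ and $\alpha(G)\leqslant26$) would force $n_G\leqslant65$; hence if $n_G\geqslant66$ some edge has $a_x+a_y\leqslant20$.

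The main obstacle is the bound on the second neighbourhood $N_2(x)$ of a maximum-degree vertex: the trivial estimate $|N_2(x)|\leqslant d_x(d_{\max}-1)\leqslant13\cdot12$ is hopelessly weak, and one genuinely has to combine the forbidden stars $K_{1,r}(s,2)$ applied to the neighbours $v\in N(x)$ and their private parts of $N_2(x)$, triangle-freeness together with $\alpha(G)\leqslant26$ (which restricts $N(x)\cup N_2(x)$ as well as $R_G(\{x\}\cup N(x))$), and the spectral constraint $\eta_2(A(G))\leqslant2$, into one tight count. Getting the constants down to exactly $66$, $68$ and $20$ is where essentially all the work of the proof goes.
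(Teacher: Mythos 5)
Your reduction to $4\mathbf{I}-2A(G)+\tfrac13\mathbf{J}\succeq 0$ and the consequences you draw from it (the forbidden stars $K_{1,r}(s,2)$, $d_{\max}\leqslant 13$, $\eta_2(A(G))\leqslant 2$) are correct, but the proposal stops exactly where the theorem starts to be hard. The sentence ``carrying out the count in each degree band gives $n_G\leqslant 68$ in every case'' is not a proof: with only the ingredients you list, the second neighbourhood $|N_2(x)|$ of a maximum-degree vertex cannot be pushed below roughly $d_x(d_{\max}-1)$ minus lower-order corrections, and you acknowledge yourself that this is ``where essentially all the work of the proof goes.'' The missing ideas are structural, not just quantitative. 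First, for $n_G\geqslant 66$ one must rule out $K_{2,3}$ as an induced subgraph and sharpen $d_{\max}\leqslant 13$ to $d_{\max}\leqslant 12$; the latter requires showing that a degree-$13$ vertex forces every non-neighbour to have exactly $3$ common neighbours with it, hence $G$ would be a $(66,13,0,3)$ strongly regular graph, which does not exist by the integrality of eigenvalue multiplicities (Lemma~\ref{SRG}). Second, once $G$ is triangle-free and $K_{2,3}$-free, the joint neighbourhood of \emph{any} edge $xy$ is isomorphic to a specific graph $M(s_1,s_2,t)$, and the determinant computation for $S(M(s_1,s_2,t)\dot\cup 2K_2)+5\mathbf{I}$ gives $3s_1+3s_2+4t\leqslant 36$, hence $a_x+a_y\leqslant 20$ for \emph{every} edge. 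Third, one needs the forbidden configurations $H\dot\cup P_4\dot\cup K_2$ and $H\dot\cup K_{1,3}\dot\cup K_2$ for $H\in\{K_{2,3},K_{1,6},B_3\}$ together with the refined bounds of Lemma~\ref{pillarforbidden2} (which track $P_3$-components, not just isolated vertices and edges) to cap $|R(H)|$ at $14$ or $16$ for a suitably chosen induced $H$ ($K_{1,5}$, $\tilde D_4^+$, $B_3$, or a star with six leaves, depending on $a_{\max}$).

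This also means your logical order is inverted. Part (ii) is not a corollary of part (i): in the actual argument the edge inequality $a_x+a_y\leqslant 20$ is established first (for all edges, from the $M(s_1,s_2,t)$ classification) and is then the main input to the count $n_G\leqslant n_H+n_{N(H)}+n_{R(H)}\leqslant 68$. Your proposed derivation of (ii) from (i) — ``if every edge had $a_x+a_y\geqslant 21$ then every non-isolated vertex has degree $\geqslant 8$, and feeding this back into the count forces $n_G\leqslant 65$'' — rests on a count you have not performed, and there is no reason to expect that a minimum-degree hypothesis alone, without the $K_{2,3}$-free edge-neighbourhood structure, yields $n_G\leqslant 65$. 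As written, both parts of the theorem remain unproved.
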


Before we give the proof of this theorem, we start with a few lemmas that can be verified by straightforward computations.
For $B_3$, see the picture below.

\begin{figure}[ht]
\centering
\begin{tikzpicture}
    \draw (1,-0.5) node {$B_3$};
    \tikzstyle{every node}=[draw,circle,fill=white,minimum size=2pt,
                            inner sep=0pt]
                            {every label}=[\tiny]

    \draw (0,0) node (11) [label=below:$$] {}
        -- ++(0:1cm) node (12) [label=below:$$] {}
        -- ++(0:1cm) node (13) [label=below:$$] {}
        -- ++(90:1cm) node (14) [label=above:$$] {}
        -- ++(180:1cm) node (15) [label=above:$$] {}
        -- ++(180:1cm) node (16) [label=above:$$] {};
    \draw  (12) -- (15);
    \draw  (11) -- (16);

\end{tikzpicture}
\end{figure}

\begin{lemma}\label{pillarforbidden1}
The following graphs are in $\mathcal{F}_{-5}$, where $P_n$ is a path with length $n-1$.
\begin{enumerate}[(i)]
  \item $K_{1,14}\dot\cup2K_2$;
  \item $H\dot\cup P_4\dot\cup K_2$, where $H\in\{K_{2,3},K_{1,6},B_3\}$;
  \item $H\dot\cup K_{1,3}\dot\cup K_2$, where $H\in\{K_{2,3},K_{1,6},B_3\}$.
\end{enumerate}
\end{lemma}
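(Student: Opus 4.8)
The plan is to verify, for each listed graph $G$, two things: first, that $\lambda_{\min}(S(G)) < -5$; and second, that every proper induced subgraph $H$ of $G$ satisfies $\lambda_{\min}(S(H)) \geqslant -5$. The second requirement is the more delicate one, but it can be reduced substantially by a structural observation. Each graph in the list is a disjoint union $G = G_1 \dot\cup G_2 \dot\cup \cdots$ whose components are among the \emph{minimal} forbidden pieces for spectral radius $2$ (such as $K_{1,5}$, $K_{2,3}$, etc.), or are \emph{sub}graphs of Smith graphs, together with copies of $K_2$. For the deletion of a vertex lying in a component that is a minimal graph of spectral radius $>2$ (like $K_{2,3}$ in part (ii)), the resulting component has spectral radius $\leqslant 2$, which changes the whole union into one whose corresponding Seidel matrix can be checked to have smallest eigenvalue $\geqslant -5$; and for deletion of a vertex from a path component or a $K_2$, the argument is similar. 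So it suffices to (a) compute $\lambda_{\min}(S(G))$ for each $G$ in the list and check it is $<-5$, and (b) check the finitely many ``one-vertex-deleted'' graphs $H$ have $\lambda_{\min}(S(H)) \geqslant -5$, which by Lemma \ref{SandA}(ii) then propagates to all proper induced subgraphs.

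For step (a), I would compute the relevant Seidel eigenvalues directly, either by an equitable-partition quotient-matrix argument in the spirit of Lemma \ref{36} — each of these graphs has a natural equitable partition (by component, and within a component by the orbits of its automorphism group, e.g.\ the bipartition classes of $K_{2,3}$, the center and leaves of $K_{1,6}$, the symmetry of $B_3$, and the two vertices of each $K_2$) — or, since we must certify that an eigenvalue is strictly less than $-5$, by exhibiting an explicit vector $\mathbf v$ with $S(G)\mathbf v$ having $x$-entry strictly below $-5\,\mathbf v_x$ at some vertex where $\mathbf v_x>0$ and at most $-5\,\mathbf v_x$ everywhere (a Perron-type test on $S(G)+5\mathbf I$, analogous to the argument used in Lemma \ref{36}(ii)). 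Concretely: for $K_{1,14}\dot\cup 2K_2$ one reuses the computation of Lemma \ref{36}(i) with $r=14$, $s=0$, $t=2$, since $(r-4)(s+4t-4)=10\cdot 4 = 40 > 36$ gives $\lambda_{\min}<-5$ exactly. For the graphs in (ii) and (iii) one sets up the $4\times 4$ or $5\times 5$ quotient matrix for the Seidel matrix under the component/orbit partition and checks that $\det(Q+5\mathbf I)$ has the sign forcing an eigenvalue below $-5$, exactly as in the proof of Lemma \ref{36}(i).

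For step (b), the key point is monotonicity of the ``$36$-threshold'' together with Corollary \ref{>2} and the following principle: deleting one vertex from $G$ either (i) kills one of the minimal-forbidden components, leaving a disjoint union of graphs each of spectral radius $\leqslant 2$ plus $K_2$'s, whose Seidel matrix is easily checked to have $\lambda_{\min}\geqslant -5$ (often via Lemma \ref{36} again, now with parameters just inside the threshold), or (ii) replaces a $K_{1,m}$, path, or $K_2$ component by a smaller forest, which only raises $\lambda_{\min}$. For instance, in $K_{1,14}\dot\cup 2K_2$, deleting a leaf gives $K_{1,13}\dot\cup 2K_2$ with $(13-4)(0+8-4)=36\leqslant 36$, hence $\lambda_{\min}\geqslant -5$; deleting the center gives $\overline{K_{14}}\dot\cup 2K_2$, and deleting an endpoint of a $K_2$ gives $K_{1,14}\dot\cup K_2\dot\cup K_1$, with $(14-4)(2+4-4) = 20 \leqslant 36$ — all fine. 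For (ii) and (iii), deleting a vertex of the $K_{2,3}$, $K_{1,6}$, or $B_3$ part drops its spectral radius strictly below $2$, so that part becomes a forest and the whole graph is a disjoint union of forests plus a $K_2$; deleting a vertex from $P_4$, $K_{1,3}$, or $K_2$ likewise only shrinks a forest component. In every resulting case one checks $\lambda_{\min}(S(H))\geqslant -5$ by the same quotient-matrix or explicit-eigenvector computation. The main obstacle, and the reason the authors write ``can be verified by straightforward computations,'' is simply the bookkeeping: there are several graphs, each with several vertex-orbits to delete, and each deletion requires a small eigenvalue computation — but no single computation is hard, and the structural reduction above keeps the list of genuinely distinct checks short.
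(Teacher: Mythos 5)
The paper offers no written proof of this lemma (it is dismissed as ``straightforward computations''), and your overall strategy --- verify $\lambda_{\min}(S(G))<-5$ for each listed graph via an equitable-partition quotient matrix, then verify $\lambda_{\min}(S(H))\geqslant-5$ for every one-vertex-deleted subgraph $H$ and invoke Lemma \ref{SandA}(ii) to propagate to all proper induced subgraphs --- is exactly the right one and surely what the authors intend. Your treatment of part (i) is essentially complete (modulo a harmless arithmetic slip: deleting one endpoint of a $K_2$ leaves $s=1$, $t=1$, so the quantity is $(14-4)(1+4-4)=10$, not $20$; the conclusion is unaffected).

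There is, however, a genuine error in the structural shortcut you use for the minimality step in parts (ii) and (iii). You claim that deleting a vertex of the $K_{2,3}$, $K_{1,6}$, or $B_3$ component ``drops its spectral radius strictly below $2$, so that part becomes a forest,'' reducing those deletions to trivial cases. This is false for two of the three choices of $H$: deleting a leaf of $K_{1,6}$ leaves $K_{1,5}$ with $\rho=\sqrt5>2$, and $B_3$ (the $2\times3$ grid) minus any of its four degree-$2$ vertices leaves $\tilde{A}^+_3$ (a $4$-cycle with a pendant vertex), again with $\rho>2$; neither $K_{1,6}$ nor $B_3$ is among the minimal graphs of Corollary \ref{>2}. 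Consequently the subgraphs $K_{1,5}\dot\cup P_4\dot\cup K_2$, $\tilde{A}^+_3\dot\cup P_4\dot\cup K_2$, $K_{1,5}\dot\cup K_{1,3}\dot\cup K_2$, and $\tilde{A}^+_3\dot\cup K_{1,3}\dot\cup K_2$ are \emph{not} disjoint unions of forests plus a $K_2$, are not covered by Lemma \ref{SandA}(i), and are precisely the deletions requiring a genuine eigenvalue computation --- yet your case analysis would file them under the trivial case and skip them. (Even for $K_{2,3}$ the deletion of a degree-$2$ vertex leaves $C_4$ with $\rho=2$ exactly, so the correct cheap argument is $\rho\leqslant2\Rightarrow\lambda_{\min}(S)\geqslant-5$ by Lemma \ref{SandA}(i), not ``becomes a forest.'') The fix is routine but must be done: for each of these residual non-forest deletions, set up the quotient matrix of $S(H)+5\mathbf{I}$ and verify positive semidefiniteness directly, just as you do for the graphs themselves. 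With that repair your outline becomes a complete proof.
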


\begin{lemma}\label{pillarforbidden2}
For a graph $G$, let $G(t_1,t_2,t_3)$ be the disjoint union of $G$, $t_1$ isolated vertices, $t_2$ copies of $K_2$ and $t_3$ copies of $P_3$, where $t_1$, $t_2$ and $t_3$ are non-negative integers. Then, the following hold.
\begin{enumerate}[(i)]
  \item The smallest eigenvalue of $K_{2,3}(t_1,t_2+1,t_3)$ satisfies $\lambda_{\min}(S(K_{2,3}(t_1,t_2+1,t_3)))\geqslant-5$ if and only if $t_1+4t_2+10t_3\leqslant14$;
  \item The smallest eigenvalue of $K_{1,6}(t_1,t_2+2,t_3)$ satisfies $\lambda_{\min}(S(K_{1,6}(t_1,t_2+2,t_3)))\geqslant-5$ if and only if $t_1+4t_2+10t_3\leqslant14$;
  \item The smallest eigenvalue of $B_3(t_1,t_2+2,t_3)$ satisfies $\lambda_{\min}(S(B_3(t_1,t_2+2,t_3)))\geqslant-5$ if and only if $t_1+4t_2+10t_3\leqslant16$.
\end{enumerate}
\end{lemma}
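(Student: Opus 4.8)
The plan is, for each of the three graphs $H\in\{K_{2,3},K_{1,6},B_3\}$ together with the corresponding constant $c$ ($c=1$ in (i), $c=2$ in (ii) and (iii)), to reduce $\lambda_{\min}\bigl(S(H(t_1,t_2+c,t_3))\bigr)$ to the smallest eigenvalue of a small quotient matrix and then to extract the stated linear condition from one determinant --- exactly the strategy of Lemma~\ref{36}. Set $S_0:=S(H(t_1,t_2+c,t_3))$ and let $\pi$ be the partition of its vertex set whose cells are the vertex orbits of $H$ under its automorphism group (two cells in each of the three cases), the $t_1$ isolated vertices, the $2(t_2+c)$ vertices on a copy of $K_2$, the $t_3$ centres of the copies of $P_3$, and the $2t_3$ leaves of the copies of $P_3$. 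Since vertices in distinct connected components always contribute $+1$ to $S_0$, every cross-block of $\pi$ is a constant all-ones block, and inside $H$ an orbit partition is automatically equitable; hence $\pi$ is an equitable partition of $S_0$, and its quotient matrix $Q$ --- of order at most $6$, with entries affine-linear in $(t_1,t_2,t_3)$ --- has the property that every eigenvalue of $Q$ is an eigenvalue of $S_0$ (Lemma~\ref{quotient}). (If $t_1=0$ or $t_3=0$ one simply drops the empty cells and runs the same argument with a smaller $Q$.)

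I would then argue that $\lambda_{\min}(S_0)\geqslant -5$ if and only if $\lambda_{\min}(Q)\geqslant -5$. On the one hand, the spectrum of $S_0$ on the orthogonal complement of the span of the cell indicators splits as a direct sum over the isolated vertices, the $K_2$'s, the $P_3$'s, and the internal part of $H$, and a direct computation shows that it consists only of the values $-1,-3,1$ (from the isolated vertices and the $K_2$'s), $-1$ and $-1\pm2\sqrt2$ (one pair from each $P_3$, via the eigenvalues $0,\pm\sqrt2$ of $A(P_3)$), and $-1$ (internal part of $K_{2,3}$ or $K_{1,6}$) or $-3,1,1\pm2\sqrt2$ (internal part of $B_3$); all of these exceed $-5$. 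On the other hand, $S_0$ has at most one eigenvalue below $-5$: deleting one suitable vertex of $H$ --- a vertex of the part of size $2$ of $K_{2,3}$, the centre of $K_{1,6}$, or a vertex of degree $3$ of $B_3$ --- turns $H(t_1,t_2+c,t_3)$ into a graph all of whose connected components have spectral radius strictly less than $2$, so its Seidel matrix has smallest eigenvalue greater than $-5$ by Lemma~\ref{SandA}(i), and Theorem~\ref{interlacing} then forces every eigenvalue of $S_0$ except possibly the smallest to be at least $-5$. Together these say that no eigenvalue of $S_0$ lying below $-5$ is missed by $Q$ and that $Q$ has at most one such eigenvalue; hence $\lambda_{\min}(S_0)\geqslant -5 \iff \lambda_{\min}(Q)\geqslant -5 \iff \det(Q+5\mathbf I)\geqslant 0$.

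It then remains to compute $\det(Q+5\mathbf I)$. Since $t_1$ appears only in the isolated-vertex column of $Q$ and $t_2$ only in the $K_2$-column, this determinant is affine-linear in $t_1$ and in $t_2$; it is also affine-linear in $t_3$, the potential $t_3^2$-term vanishing because the two $P_3$-columns of $Q$ depend on $t_3$ through proportional vectors. A direct evaluation then gives $\det(Q+5\mathbf I)$ as a positive multiple of $14-(t_1+4t_2+10t_3)$ in cases (i) and (ii), and of $16-(t_1+4t_2+10t_3)$ in case (iii), which is precisely the asserted equivalence. The main obstacle is this final, purely computational, step: the three quotient-matrix determinants have to be worked out explicitly, and the $B_3$ case is the most laborious since $B_3$ is the least symmetric of the three and its quotient matrix is correspondingly the densest; this is the ``straightforward computation'' the statement defers. (As an independent sanity check one may instead use monotonicity of $\lambda_{\min}(S(H(t_1,t_2+c,t_3)))$ in each parameter --- Lemma~\ref{SandA}(ii), via induced subgraphs --- and verify $\lambda_{\min}\geqslant-5$ only at the finitely many coordinatewise-maximal lattice points of $\{t_1+4t_2+10t_3\le K\}$ and $\lambda_{\min}<-5$ at the finitely many coordinatewise-minimal lattice points of its complement, with $K=14$ or $16$.)
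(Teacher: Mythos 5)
The paper offers no proof of this lemma at all (it is dismissed as verifiable "by straightforward computations"), and your reduction --- equitable partition into component-type cells, the quotient matrix $Q$, the observation that the spectrum off the cell-indicator span stays above $-5$, the vertex-deletion-plus-interlacing argument showing at most one eigenvalue of $S_0$ lies at or below $-5$, and hence the criterion $\det(Q+5\mathbf I)\geqslant 0$ --- is exactly the method the paper itself uses for the analogous Lemma~\ref{36}, so this is essentially the paper's intended approach and every structural step you supply (equitability, the complementary eigenvalues, the multi-affineness of the determinant) checks out. Like the paper, you defer the final explicit evaluation of the three determinants (and of the degenerate sub-cases $t_1=0$ or $t_3=0$), which is the only piece separating your write-up from a complete proof.
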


\begin{lemma}\label{noK23}
Let $G$ be a triangle-free graph with order $n_G$ such that $\lambda_{\min}(G\dot\cup2K_2)\geqslant-5$. Let $a_{\max}$ be the maximum valency of $G$. If $n_G\geqslant66$, then
\begin{enumerate}[(i)]
  \item $a_{\max}\leqslant12$;
    \item $G$ does not contain $K_{2,3}$ as an induced subgraph.
\end{enumerate}
\end{lemma}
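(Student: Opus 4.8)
The plan is to bound the maximum valency first and then use that bound, together with the forbidden configurations from Lemmas~\ref{pillarforbidden1} and~\ref{pillarforbidden2}, to rule out an induced $K_{2,3}$. For part~(i), suppose $a_{\max}\geqslant13$ and let $x$ be a vertex with $a_x\geqslant 13$; pick $14$ of its neighbours (or rather, since $G$ is triangle-free, the closed neighbourhood of $x$ together with $x$ is a star $K_{1,a_x}$). The point is that $K_{1,13}$ together with the extra $2K_2$ coming from the base already contains $K_{1,14}\dot\cup 2K_2$ only if $a_x\geqslant 14$, so I first have to be slightly more careful: I would instead argue that if $a_x\geqslant 13$ then, because $G$ is triangle-free, $N_G(x)$ is independent, and I want to find either $14$ neighbours of $x$ or extra pendant structure. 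The cleaner route is to use Proposition~\ref{prop:one-large}: if $a_x$ is large, the star $K_{1,a_x}$ has spectral radius $>2$ once $a_x\geqslant 5$, so $R(K_{1,a_x})$ in $G\dot\cup 2K_2$ has spectral radius $<2$ and hence is a forest with $n_{R}\leqslant 2\alpha(R)\leqslant 2\cdot 26=52$ (here I use $\alpha(G)\leqslant 26$, which is inherited from the hypothesis $\alpha(G\dot\cup 2K_2)\geqslant$ something — actually the hypothesis is just $\lambda_{\min}\geqslant-5$, so I need the $\alpha$ bound; I would bring it in via Lemma~\ref{36}(i) exactly as in the proof of Theorem~\ref{LStrianglefree}, getting that every vertex off a large independent set has at most $4$ or $5$ neighbours on it, forcing $\alpha(G)$ to control everything). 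Counting $1+a_x+n_{R(K_{1,a_x})}$ plus the neighbours-of-neighbours of $x$ and comparing with $n_G\geqslant 66$ forces $a_x$ to be small; tuning the constants with the sharp forbidden graphs $K_{1,14}\dot\cup 2K_2$, $K_{1,6}\dot\cup K_{1,3}\dot\cup K_2$, etc.\ from Lemma~\ref{pillarforbidden1} should yield $a_{\max}\leqslant 12$.

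For part~(ii), suppose $G$ contains an induced $K_{2,3}$, call it $K$. Since $\rho(K_{2,3})>2$, Proposition~\ref{prop:one-large} gives $\rho(R(K))<2$ in $G\dot\cup 2K_2$, so $R(K)$ is a forest and $n_{R(K)}\leqslant 2\alpha(R(K))$. Now I need an upper bound on $\alpha(R(K))$: this is exactly where Lemma~\ref{pillarforbidden2}(i) enters. An independent set in $R(K)$ of size $t_1$ gives, after possibly absorbing some of the $2K_2$ and some $P_3$'s from pendant paths, the configuration $K_{2,3}(t_1,t_2+1,t_3)$, which forces $t_1+4t_2+10t_3\leqslant 14$; combined with the two extra $K_2$'s from the base (so $t_2\geqslant 1$, say) this should pin $\alpha(R(K))$ down to a small number, roughly $\alpha(R(K))\leqslant 10$ or so. Hence $n_{R(K)}\leqslant 20$-ish. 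Then the usual vertex count
\begin{equation*}
n_G\leqslant n_K(a_{\max}+1)-2\varepsilon_K+n_{R(K)}\leqslant 5\cdot 13-12+n_{R(K)}
\end{equation*}
using $a_{\max}\leqslant 12$ from part~(i) and $\varepsilon_{K_{2,3}}=6$, gives $n_G\leqslant 53+n_{R(K)}$, and if $n_{R(K)}$ is at most about $12$ this contradicts $n_G\geqslant 66$. The arithmetic has to be arranged so the final bound lands strictly below $66$.

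I expect the main obstacle to be part~(ii), specifically extracting a strong enough bound on $\alpha(R(K))$ (equivalently $n_{R(K)}$). The crude estimate $n_{R(K)}\leqslant 2\alpha(R(K))$ together with Lemma~\ref{pillarforbidden2}(i) only controls independent sets that can be "completed" to the forbidden union, so I have to account carefully for how pendant edges and $P_3$'s in the forest $R(K)$ interact with the fixed $2K_2$ from switching the base; in particular, I must make sure I am allowed to take $t_2\geqslant 1$ (the two base edges) and possibly also use $P_3$-components of $R(K)$ to get the factor-$10$ savings. A secondary subtlety is that part~(i)'s bound $a_{\max}\leqslant 12$ must be in hand before doing the vertex count in part~(ii), so the order of the argument matters; and in proving part~(i) one must handle the boundary case $a_{\max}=13$, where the sharp forbidden graph $K_{1,14}\dot\cup 2K_2$ is just barely not present and one needs instead the mixed graphs $K_{1,6}\dot\cup P_4\dot\cup K_2$ or $K_{1,6}\dot\cup K_{1,3}\dot\cup K_2$ from Lemma~\ref{pillarforbidden1}(ii)--(iii) to close the gap.
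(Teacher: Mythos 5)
Your proposal has the right overall architecture (bound $a_{\max}$ first, then rule out $K_{2,3}$ by a vertex count), but both halves have genuine gaps. For part (i), the forbidden graph $K_{1,14}\dot\cup 2K_2$ only gives $a_{\max}\leqslant 13$, and you correctly identify the boundary case $a_{\max}=13$ as the problem, but you have no concrete argument for it: ``tuning the constants'' with the mixed forbidden graphs of Lemma~\ref{pillarforbidden1} does not obviously close the gap, and your fallback via Proposition~\ref{prop:one-large} smuggles in $\alpha(G)\leqslant 26$, which is \emph{not} a hypothesis of this lemma (it enters only later, in Lemma~\ref{nG68}). The paper's argument for this case is of a different nature: if $x$ has valency $13$ and $y$ is a non-neighbour of $x$ with $t$ common neighbours, a determinant computation on the subgraph $K$ induced by $\{x,y\}\cup N(x)$ shows that $\det(S(K\dot\cup 2K_2)+5\mathbf{I})\geqslant 0$ forces $(t-3)^2\leqslant 0$; hence every non-neighbour of $x$ has exactly $3$ common neighbours with $x$, which bounds $n_G\leqslant 1+13+\tfrac{13\cdot 12}{3}=66$, forces equality, and makes $G$ a strongly regular graph with parameters $(66,13,0,3)$ --- which does not exist by the integrality condition of Lemma~\ref{SRG}. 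Nothing in your plan substitutes for this step.

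For part (ii), your arithmetic does not close: from $n_G\leqslant 5(a_{\max}+1)-12+n_{R(K)}\leqslant 53+n_{R(K)}$ you need $n_{R(K)}\leqslant 12$, but your sketch only reaches ``$n_{R(K)}\leqslant 20$-ish'', which is perfectly compatible with $n_G\geqslant 66$. The missing idea is structural: Lemma~\ref{pillarforbidden1}(ii) and (iii) with $H=K_{2,3}$ show that $R(K)$ contains neither $P_4$ nor $K_{1,3}$ as an induced subgraph, so (being also triangle-free) $R(K)$ is a disjoint union of $t_1$ isolated vertices, $t_2$ edges and $t_3$ paths $P_3$; then Lemma~\ref{pillarforbidden2}(i), applied with the two extra copies of $K_2$ already present, gives $t_1+4t_2+10t_3\leqslant 10$, hence $n_{R(K)}=t_1+2t_2+3t_3\leqslant 10$ and $n_G\leqslant 63<66$. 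Without first pinning down $R(K)$ as a union of $K_1$, $K_2$ and $P_3$ components, the weighted inequality of Lemma~\ref{pillarforbidden2} cannot be brought to bear, and the crude bound $n_{R(K)}\leqslant 2\alpha(R(K))$ that you fall back on is too weak to finish.
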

\begin{proof}
As $K_{1,14}\dot\cup2K_2\in \mathcal{F}_{-5}$, by Lemma \ref{pillarforbidden1} $(i)$, we have $a_{\max}\leqslant13$.
Suppose that $a_{\max}=13$, and let $x$ be a vertex with valency $13$ in $G$.
Consider a vertex $y\neq x$ which is not adjacent to $x$. Assume that $x$ and $y$ have $t$ common neighbours. Let $K$ be the subgraph of $G$ induced on $\{x,y\}\cup\{z\in V(G)\mid z\sim x\}$. We observe that $\det(S(K\dot\cup2K_2)+5\mathbf{I})\geqslant0$ if and only if $(t-3)^2\leqslant0$. It follows that every vertex, that is not adjacent to $x$, has $3$ common neighbours with $x$ in $G$. As $a_{\max}\leqslant13$, we have
\begin{align*}
n_G & \leqslant1+a_{\max}+\frac{a_{\max}(a_{\max}-1)}{3}\\
& \leqslant1+13+\frac{13(13-1)}{3}\\
& =66.
\end{align*}
By the assumption that $n_G\geqslant66$, we see that the equality must hold, and every vertex in $G$ has valency $13$.
Hence, in this case, $G$ is a strongly regular graph with parameters $(66,13,0,3)$. Such a strongly regular graph does not exist as the multiplicities of the eigenvalues are non-integral, by Lemma \ref{SRG}.
Hence $a_{\max} \leqslant 12$.

Suppose that $G$ contains $K_{2,3}$ as an induced subgraph, say $H$.
There are at most $n_{H}(1+a_{\max})-2\varepsilon_{H}$ vertices in $G$ that are either in $H$ or have a neighbour in $H$. Let $R(H)$ be the subgraph induced on the vertices of $G$ that are neither in $H$ nor have a neighbour in $H$. Now $R(H)$ has neither $P_4$ nor $K_{1,3}$ as an induced subgraph by Lemma \ref{pillarforbidden1}.
It follows by Lemma \ref{pillarforbidden2} $(i)$ that $R(H)$ has at most $10$ vertices, as there are 2 non-adjacent $K_2$ outside $R(H)$ in $G$. So
\begin{align*}
  n_G&\leqslant n_H(1+a_{\max})-2\varepsilon_H+n_{R(H)}\\
  &\leqslant5+5a_{\max}-12+10 \\
  &\leqslant 63 < 66,
\end{align*}
a contradiction.  Therefore $G$ may not contain a $K_{2,3}$ as an induced subgraph.
This finishes the proof.
\end{proof}

The next lemma is needed when $G$ does not contain a $K_{2,3}$ as an induced subgraph
and it can be proved by straightforward computations.

\begin{lemma}\label{pillaredgeneighbour}
Let $H$ be the disjoint union of $tK_2$ and $s_1+s_2$ isolated vertices, where $t$, $s_1$ and $s_2$ are non-negative integers. Let $M(s_1,s_2,t)$ be the graph obtained by adding an edge $xy$ to $H$ such that $x$ is adjacent to $t$ isolated vertices in $tK_2$ and $s_1$ isolated vertices, and $y$ is adjacent to all vertices in $H$ that are not adjacent to $x$. For example, see $M(3,2,3)$ as below. Then, the smallest eigenvalue of $M(s_1,s_2,t)\dot\cup2K_2$ satisfies $\lambda_{\min}(S(M(s_1,s_2,t)\dot\cup2K_2))\geqslant-5$ if and only if $3s_1+3s_2+4t\leqslant36$.
\begin{center}
\begin{tikzpicture}
\draw (0,-0.5) node {$M(3,2,3)$};
 \tikzstyle{every node}=[draw,circle,fill=white,minimum size=2pt,
                            inner sep=0pt]
                            {every label}=[\footnotesize]
    \draw (-3,0) node (1) {};
    \draw (-2.5,0) node (2) {};
    \draw (-2,0) node (3) {};
    \draw (-1.2,0) node (4) {};
    \draw (-0.8,0) node (5) {};
    \draw (-0.2,0) node (6) {};
    \draw (0.2,0) node (7) {};
    \draw (0.8,0) node (8) {};
    \draw (1.2,0) node (9) {};
    \draw (2,0) node (10) {};
    \draw (2.6,0) node (11) {};
    \draw (-0.5,1) node (13) [label=above:$x$] {};
    \draw (0.5,1) node (14) [label=above:$y$] {};
    \draw (4) -- (5);
    \draw (6) -- (7);
    \draw (8) -- (9);
    \draw (13) -- (14);
    \draw (13) -- (1);
    \draw (13) -- (2);
    \draw (13) -- (3);
    \draw (13) -- (4);
    \draw (13) -- (6);
    \draw (13) -- (8);
    \draw (14) -- (5);
    \draw (14) -- (7);
    \draw (14) -- (9);
    \draw (14) -- (10);
    \draw (14) -- (11);
\end{tikzpicture}
\end{center}
\end{lemma}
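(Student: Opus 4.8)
The plan is to follow the blueprint of the proof of Lemma~\ref{36}. Put $G:=M(s_1,s_2,t)\dot\cup 2K_2$, write $x,y$ for the endpoints of the added edge, and set $S:=S(G)$. I would first treat the case $s_1,s_2,t\geqslant 1$; the cases in which one or two of these parameters vanish are handled in exactly the same way with a smaller quotient matrix. Partition $V(G)$ into the seven classes $\{x\}$, $\{y\}$, the $s_1$ degree-one neighbours of $x$, the $s_2$ degree-one neighbours of $y$, the $t$ neighbours of $x$ inside $tK_2$, the $t$ neighbours of $y$ inside $tK_2$, and the four vertices of the separate $2K_2$. This partition is equitable with respect to $A(G)$, hence with respect to $S$, and one writes down the $7\times7$ quotient matrix $Q$ of $S$ (each entry of $Q$ is affine in $s_1,s_2,t$). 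By Corollary~\ref{S-quotient}, every eigenvalue of $Q$ is an eigenvalue of $S$.

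Next I would identify the rest of the spectrum of $S$. A direct computation shows that every vector supported on a single class and summing to zero is an eigenvector of $S$ with eigenvalue $-1$ (for the two classes of degree-one neighbours); that for the two $tK_2$-classes the vectors $\sum_i c_i(a_i+b_i)$ and $\sum_i c_i(a_i-b_i)$ with $\sum_i c_i=0$, where $a_i\sim b_i$ are the corresponding paired vertices, are eigenvectors with eigenvalues $-3$ and $1$; and that the three vectors orthogonal to the all-ones vector on the separate $2K_2$ are eigenvectors with eigenvalues in $\{-3,1\}$. A dimension count shows that these, together with the eigenvalues of $Q$, exhaust the spectrum of $S$. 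Since the separate $2K_2$ always contributes the eigenvalue $-3$, this gives $\lambda_{\min}(S)=\min(\lambda_{\min}(Q),-3)$, and in particular
\[
\lambda_{\min}(S)\geqslant -5\ \Longleftrightarrow\ \lambda_{\min}(Q)\geqslant -5 .
\]

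It then remains to compute $\det(Q+5\mathbf I)$ and check that it is negative precisely when $3s_1+3s_2+4t>36$ and positive precisely when $3s_1+3s_2+4t<36$. If $3s_1+3s_2+4t>36$, then $\det(Q+5\mathbf I)<0$, so $Q$ has an odd number, hence at least one, of eigenvalues below $-5$; by Corollary~\ref{S-quotient} this is an eigenvalue of $S$, so $\lambda_{\min}(S)<-5$. For the converse I would use monotonicity: $M(s_1',s_2',t')\dot\cup 2K_2$ is an induced subgraph of $M(s_1,s_2,t)\dot\cup 2K_2$ whenever $s_1'\leqslant s_1$, $s_2'\leqslant s_2$, $t'\leqslant t$, so by Lemma~\ref{SandA}(ii) it suffices to show $\lambda_{\min}(S)\geqslant -5$ for the finitely many triples $(s_1,s_2,t)$ that are maximal subject to $3s_1+3s_2+4t\leqslant 36$; for each of these $\det(Q+5\mathbf I)\geqslant 0$, and one checks directly (e.g.\ via principal minors) that $Q+5\mathbf I$ is positive semidefinite, so $\lambda_{\min}(Q)\geqslant -5$ and we conclude by the displayed equivalence.

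The main obstacle is the second step above. Unlike in Lemma~\ref{36}, one cannot reduce to $Q$ by deleting a single vertex: deleting $x$ or $y$ can leave a spider whose Seidel matrix already has an eigenvalue below $-3$, and deleting both leaves $\overline{K}_{s_1+s_2}\dot\cup(t+2)K_2$, which has the same defect once $t\geqslant 1$ and $s_1+s_2$ is not too small. Thus the complete spectral bookkeeping --- or the finite monotonicity reduction --- is genuinely needed; the determinant computation, its sign analysis, and the degenerate cases are then routine.
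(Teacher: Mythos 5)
The paper offers no proof of this lemma at all --- it is introduced with the remark that ``it can be proved by straightforward computations'' --- so there is nothing to compare your argument against line by line; what you have written is a legitimate way to carry out the omitted computation, and it is essentially a careful elaboration of the template of Lemma~\ref{36}. Your seven-class partition is indeed equitable, your inventory of the non-quotient eigenvalues (value $-1$ with multiplicity $s_1-1$ and $s_2-1$ from the pendant classes, $-3$ and $1$ each with multiplicity $t-1$ from the matched pair of classes, and $-3,1,1$ from the extra $2K_2$) is correct and the dimension count $7+(s_1-1)+(s_2-1)+(2t-2)+3=s_1+s_2+2t+6$ closes, so $\lambda_{\min}(S)\geqslant-5$ is equivalent to $\lambda_{\min}(Q)\geqslant-5$. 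You are also right that the one-vertex-deletion interlacing trick of Lemma~\ref{36} does not transfer: deleting $x$ and $y$ leaves $\overline{K}_{s_1+s_2}\dot\cup(t+2)K_2$ with Seidel smallest eigenvalue exactly $-3$, so interlacing only forbids \emph{three} eigenvalues below $-3$, and the sign of $\det(Q+5\mathbf I)$ alone cannot decide the ``if'' direction; your workaround (monotonicity under induced subgraphs via Lemma~\ref{SandA}(ii) plus a finite verification at the maximal triples with $3s_1+3s_2+4t\in\{34,35,36\}$) is sound. Two small caveats: the quotient matrix $Q$ of a symmetric matrix is not itself symmetric, so ``check that $Q+5\mathbf I$ is positive semidefinite via principal minors'' should be phrased as checking the symmetrized matrix $D^{1/2}(Q+5\mathbf I)D^{-1/2}$ with $D=\operatorname{diag}(|V_i|)$, or as verifying that the characteristic polynomial of $Q$ has no root below $-5$; and the central determinant identity $\det(Q+5\mathbf I)=c\,(36-3s_1-3s_2-4t)$ with $c>0$ is asserted rather than computed --- it is consistent with the degenerate cases one can cross-check against Lemma~\ref{36} (e.g.\ $M(s_1,0,0)\dot\cup2K_2=K_{1,s_1+1}(0,2)$ gives $s_1\leqslant12$ from both formulas), but it is the one computation that still has to be done, exactly as in the paper.
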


\begin{remark}
  $M(0,0,2)$ is the graph $B_3$.
  Therefore if a graph does not contain $B_3$, it does not contain $M(0,0,2)$ either.
\end{remark}

\begin{lemma}\label{nG68}
Let $G$ be a triangle-free graph with order $n_G\geqslant66$ such that $\lambda_{\min}(G\dot\cup2K_2)\geqslant-5$. For a vertex $x$ in $G$, let $a_x$ be the valency of $x$ in $G$. If the independence number $\alpha(G)$ of $G$ satisfies $\alpha(G)\leqslant26$, then
\begin{enumerate}[(i)]
  \item $a_x+a_y\leqslant20$ for all $xy\in E(G)$;
  \item $n_G\leqslant68$.
\end{enumerate}
\end{lemma}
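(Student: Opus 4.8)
\textbf{Proof proposal for Lemma~\ref{nG68}.}
The plan is to exploit the structural restrictions already forced on $G$ by Lemma~\ref{noK23}: since $n_G \geqslant 66$, we know $a_{\max}\leqslant 12$ and $G$ is $K_{2,3}$-free. Being triangle-free and $K_{2,3}$-free means any two vertices have at most two common neighbours, so neighbourhoods are ``thin''. The central idea is to take an edge $xy$ with $a_x+a_y$ as large as possible and build an induced subgraph of $G\dot\cup 2K_2$ that matches one of the configurations in Lemma~\ref{pillaredgeneighbour}. Concretely, I would look at the vertices adjacent to $x$ or $y$; because $G$ is triangle-free, the set $N(x)\setminus\{y\}$ and $N(y)\setminus\{x\}$ are each independent, they share at most one vertex (else a $C_4$ plus more would threaten a $K_{2,3}$, and $x,y$ common neighbours are controlled), and the graph induced on $\{x,y\}\cup N(x)\cup N(y)$ is essentially a subgraph of some $M(s_1,s_2,t)$ after accounting for the few edges between $N(x)$ and $N(y)$.

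For part~(i): suppose some edge $xy$ has $a_x+a_y\geqslant 21$. I would delete from $G$, if necessary, edges inside $N(x)\cup N(y)$ away from $x,y$ to pass to an induced subgraph $H$ of $G$ that contains $M(s_1,s_2,t)$ as an induced subgraph with $s_1+s_2+2t$ roughly equal to $a_x+a_y-2$ — more precisely, a vertex $w\ne x,y$ adjacent to $x$ is a pendant at $x$ unless it also lies in $N(y)$, and analogously for $y$, and any further adjacencies among those pendant vertices would (with $x$ or $y$) create a triangle, which is forbidden. So $H_{N(x)\cup N(y)\cup\{x,y\}}$ is exactly a graph of the form $M(s_1,s_2,t)$ with $s_1+s_2+2t = a_x+a_y-2 \geqslant 19$, hence $3s_1+3s_2+4t \geqslant \tfrac{3}{2}(a_x+a_y-2)+\tfrac12\cdot(\text{something})$; I need to be careful here and instead argue: $t$ counts common neighbours of $x,y$, which is $0$ since $G$ is triangle-free, so in fact every neighbour of $x$ other than $y$ is a pendant at $x$, every neighbour of $y$ other than $x$ is a pendant at $y$, and there are no $K_2$'s among them attached this way — so the relevant subgraph is $M(a_x-1,a_y-1,0)$, giving the condition $3(a_x-1)+3(a_y-1) = 3(a_x+a_y-2)\leqslant 36$, i.e.\ $a_x+a_y\leqslant 14$. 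Since $G\dot\cup 2K_2$ must have smallest eigenvalue $\geqslant -5$ and $M(a_x-1,a_y-1,0)\dot\cup 2K_2$ is an induced subgraph of it, Lemma~\ref{pillaredgeneighbour} and Lemma~\ref{SandA}(ii) force $a_x+a_y\leqslant 14 \leqslant 20$. (The slack between $14$ and $20$ presumably absorbs the case where some neighbours of $x,y$ do carry an extra edge to a vertex outside $N(x)\cup N(y)$, turning isolated pendants into $K_2$-attachments and shifting weight from the $s_i$ to $t$; I would check those variants, but they only relax the bound.)

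For part~(ii): once part~(i) is in hand, pick a vertex $x$ of maximum valency $a_x =: a \leqslant 12$. Partition $V(G)$ into $\{x\}$, $N(x)$, and $R := V(G)\setminus(\{x\}\cup N(x))$. By part~(i) every vertex in $N(x)$ has valency at most $20-a$ in $G$; since $G$ is triangle-free, $N(x)$ is independent, so all edges from $N(x)$ go to $R$, giving at most $a(20-a-1) = a(19-a)$ such edges. Now I estimate $|R|$. Every vertex of $R$ non-adjacent to $x$ has at most two common neighbours with $x$ (by $K_{2,3}$-freeness and triangle-freeness), so the number of vertices of $R$ adjacent to $N(x)$ is at most $\lceil a(19-a)/1 \rceil$ counted with multiplicity, i.e.\ at least $\ldots$; more cleanly, the second neighbourhood has size at most $a(19-a)$ but each vertex there is hit at most twice, and the vertices of $R$ at distance $\geqslant 3$ from $x$ together with a suitable piece of the second neighbourhood form an induced subgraph with no large structure, which via Lemma~\ref{pillarforbidden2} (applied to whatever of $K_{2,3}$, $K_{1,6}$, $B_3$ the remaining graph avoids — it avoids $K_{2,3}$ by Lemma~\ref{noK23}) is bounded by a constant plus a linear term. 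Optimising $1 + a + |N_2(x)| + |R_{\geqslant 3}|$ over $a\leqslant 12$ should yield $n_G \leqslant 68$. The main obstacle I anticipate is the bookkeeping in bounding $|R|$: I must correctly separate the second neighbourhood (controlled by the valency bound of part~(i) together with the $K_{2,3}$-free multiplicity-$\leqslant 2$ condition) from the ``far'' part of $R$ (controlled by forbidden-subgraph lemmas, using that the $2K_2$ outside $G$ eats into the allowed budget), and make the two estimates add up to exactly $68$ rather than something weaker; getting the extremal configuration right — likely $a=12$ with equality somewhere — is where the argument has to be tight rather than merely asymptotic.
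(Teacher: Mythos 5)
Your overall strategy for part (i) --- realize the closed neighbourhood of an edge $xy$ as one of the graphs $M(s_1,s_2,t)$ and invoke Lemma~\ref{pillaredgeneighbour} --- is exactly the paper's, but you misread what $t$ counts, and this produces a false intermediate claim. In $M(s_1,s_2,t)$ the $t$ copies of $K_2$ are not common neighbours of $x$ and $y$: each $K_2$ has one endpoint adjacent to $x$ and the other adjacent to $y$, i.e.\ the $t$ edges form a matching between $N(x)\setminus\{y\}$ and $N(y)\setminus\{x\}$. Triangle-freeness kills common neighbours of $x$ and $y$ and makes each of $N(x)\setminus\{y\}$, $N(y)\setminus\{x\}$ independent, but it does not kill these cross-edges; what controls them is $K_{2,3}$-freeness (Lemma~\ref{noK23}(ii)), which forces them to be a matching. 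So the induced subgraph on $\{x,y\}\cup N(x)\cup N(y)$ is $M(s_1,s_2,t)$ with $a_x+a_y=2+s_1+s_2+2t$, and the correct computation is $a_x+a_y=2+\tfrac{2s_1+2s_2+4t}{2}\leqslant 2+\tfrac{36-s_1-s_2}{2}\leqslant 20$, with equality forced by $s_1=s_2=0$, $t=9$. Your assertion that the subgraph is $M(a_x-1,a_y-1,0)$, hence $a_x+a_y\leqslant 14$, is simply false: $M(0,0,9)\dot\cup 2K_2$ satisfies the eigenvalue condition and has $a_x+a_y=20$. (The bound $14$ does hold when $G$ is additionally $B_3$-free, since $B_3=M(0,0,2)$ forces $t\leqslant 1$ --- that is precisely the sub-case split the paper makes later, not a harmless ``variant'' to be waved off.)

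Part (ii) as sketched does not close. From part (i), a neighbour of a maximum-valency vertex $x$ has valency at most $20-a_{\max}$, so your second-neighbourhood estimate is $|N_2(x)|\leqslant a_{\max}(19-a_{\max})$, which can be as large as about $90$ for $a_{\max}$ near $9$ or $10$ --- already far beyond $68$ before you even touch the distance-$\geqslant 3$ part, and no amount of ``bookkeeping'' on $R$ repairs that. The paper instead enlarges the core: in the range $6\leqslant a_{\max}\leqslant 12$ it takes either an induced $K_{1,6}$ (when $G$ is $B_3$-free, so that $a_u+a_v\leqslant 14$ on every edge) or an induced $B_3$ (summing $a_u+a_v\leqslant 20$ over its three independent edges), bounds the neighbourhood of that core by these edge-sums, and bounds the remainder by Lemma~\ref{pillarforbidden2} after noting (via Lemma~\ref{pillarforbidden1}) that the remainder contains no induced $P_4$ or $K_{1,3}$; this is what brings the count down to $68$. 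There is also an entirely separate case $3\leqslant a_{\max}\leqslant 5$, handled with minimal subgraphs of spectral radius $>2$ and the hypothesis $\alpha(G)\leqslant 26$ --- a hypothesis your argument never uses, which is a sign the low-degree regime is unaddressed in your plan.
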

\begin{proof}
$(i)$ Let $xy$ be an edge of $G$. Let $N(xy)$ be the subgraph of $G$ induced on the vertices outside $\{x,y\}$ and have a neighbour in $\{x,y\}$. As $G$ is triangle-free and does not contain $K_{2,3}$ as an induced subgraph, we observe that $N(xy)$ is isomorphic to $M(s_1,s_2,t)$ of Lemma \ref{pillaredgeneighbour}, for some non-negative integers $s_1$, $s_2$ and $t$. By Lemma \ref{pillaredgeneighbour}, $3s_1+3s_2+4t\leqslant36$. Note that
\begin{align*}
  a_x+a_y &=2+s_1+s_2+2t\\
   &=2+\frac{2s_1+2s_2+4t}{2} \\
   &\leqslant2+\frac{36-s_1-s_2}{2} \\
   &\leqslant20.
\end{align*}
This shows the case $(i)$.

$(ii)$ We denote by $\rho(G)$ the spectral radius of $G$. If $\rho(G)\leqslant2$, then $n_G\leqslant\frac{5}{2}\alpha(G)\leqslant\frac{5\times26}{2}=65$, which is a contradiction. This implies that $a_{\max}\geqslant3$. By Lemma \ref{noK23}, we have $3\leqslant a_{\max}\leqslant12$. Next, we will consider two cases $6\leqslant a_{\max}\leqslant12$ and $3\leqslant a_{\max}\leqslant5$.

In the following proof, we will denote by $N(H)$ (resp. $R(H)$) subgraph of $G$ induced on the vertices outside $V(H)$ that have a neighbour (resp. no neighbours) in $H$, for an induced subgraph, $H$, of $G$.

\noindent{\bf Case $1$. $6\leqslant a_{\max}\leqslant12$.} First we assume that $G$ does not contain $B_3$ of Lemma \ref{pillarforbidden1} as an induced subgraph. Note that the graph $M(s_1,s_2,t)$ in $(i)$ satisfies that $t\leqslant1$ since $M(0,0,2)$ is the graph $B_3$.
It follows that
\begin{align*}
  a_x+a_y & =2+s_1+s_2+2t \\
   & =2+\frac{3s_1+3s_2+6t}{3} \\
   & \leqslant\lfloor2+\frac{36+2t}{3}\rfloor \\
   & =14.
\end{align*}
Let $v$ be a vertex in $G$ with valency $a_v=a_{\max}$ and $v_1,\ldots,v_6$ be $6$ neighbours of $v$. Then $a_v+a_{v_i}\leqslant14$ for $i=1,\ldots,6$. Let $H_1$ be the subgraph of $G$ induced on $\{v,v_1,\ldots,v_6\}$. Then
\begin{align*}
n_{N(H_1)}&\leqslant a_v+\sum\limits_{i=1}^6a_{v_i}-2\times6\\
&=\sum\limits_{i=1}^6(a_v+a_{v_i})-5a_v-12\\
&\leqslant6\times14-5\times6-12\\
&=42.
\end{align*}
Note that $R(H_1)$ has neither $P_4$ nor $K_{1,3}$ as an induced subgraph by Lemma \ref{pillarforbidden1}. It follows by Lemma \ref{pillarforbidden2} $(ii)$ that $R(H_1)$ has at most $14$ vertices, as $\rho(R(H_1))<2$. It follows that $n_G=n_{H_1}+n_{N(H_1)}+n_{R(H_1)}\leqslant 7+42+14=63$, which is a contradiction.

So $G_{\mathcal{P}}$ contains $B_3$ as an induced subgraph. Let $H_2\cong B_3$ be an induced subgraph of $G$, and $V(H_2)=\{w_1,\ldots,w_6\}$ and $E(H_2)=\{w_1w_2,w_3w_4,w_5w_6,w_1w_3,w_2w_4,w_3w_5,w_4w_6\}$. Note that $a_{w_i}+a_{w_{i+1}}\leqslant20$ for $i=1,3,5$, by $(i)$. Then
\begin{align*}
n_{N(H_2)}&\leqslant \sum\limits_{i=1}^6a_{w_i}-2\times7\\
&\leqslant 3\times20-2\times7\\
&=46.
\end{align*}
Note that $R(H_2)$ has neither $P_4$ nor $K_{1,3}$ as an induced subgraph by Lemma \ref{pillarforbidden1}. It follows by Lemma \ref{pillarforbidden2} $(iii)$ that $R(H_2)$ has at most $16$ vertices, as $\rho(R(H_2))<2$. So $n_G=n_{H_2}+n_{N(H_2)}+n_{R(H_2)}\leqslant6+46+16=68$. This shows the case $6\leqslant a_{\max}\leqslant12$.

\noindent{\bf Case $2$. $3\leqslant a_{\max}\leqslant5$.} Let $K$ be a minimal subgraph of $G$ with $\rho(K)>2$. Then, $\rho(R(K))<2$, by Proposition \ref{prop:one-large}. It follows that $n_{R(K)}\leqslant 2\alpha(R(K))\leqslant2(\alpha(G)-\alpha(K))\leqslant 52-2\alpha(K)$, as $\alpha(G)\leqslant26$.
So, $n_G\leqslant n_{K}+n_{K}a_{\max}-2\varepsilon_{K}+2\alpha(R(K))\leqslant n_{K}(1+a_{\max})-2\varepsilon_{K}+52-2\alpha(K)$.

First we assume that $3\leqslant a_{\max}\leqslant4$. If $G$ contains $\tilde{D}^+_4$ as an induced subgraph, say $H_3$. Then $n_{R(H_3)}\leqslant52-2\alpha(H_3)\leqslant52-2\times4=44$. Note that $n_{N(H_3)}\leqslant6a_{\max}-2\times5\leqslant6\times4-10=14$. So, $n_G=n_{H_3}+n_{N(H_3)}+n_{R(H_3)}\leqslant6+44+14=64$, this is a contradiction. Assume that $G$ does not contain $\tilde{D}^+_4$ as an induced subgraph. Let $H_4$ be a minimal subgraph of $G$ with $\rho(H_4)>2$. We observe that every vertex in $H_4$ has valency at most $3$ in $G$. In this case, we have $n_{H_4}\leqslant10$, $\varepsilon_{H_4}\geqslant n_{H_4}-1$ and $\alpha(H_4)\geqslant \lceil\frac{n_{H_4}}{2}\rceil$, by Corollary \ref{>2}. This shows that
\begin{align*}
  n_G & =n_{H_4}+n_{N(H_4)}+n_{R(H_4)} \\
   & \leqslant n_{H_4}(1+a_{\max})-2\varepsilon_{H_4}+52-2\alpha(H_4) \\
   & \leqslant 4n_{H_4}-2(n_{H_4}-1)+52-n_{H_4} \\
   & \leqslant n_{H_4}+54\\
  & \leqslant 64,
\end{align*}
this is a contradiction.

This follows that $a_{\max}=5$. Let $H_5\cong K_{1,5}$ be an induced subgraph of $G$. We obtain that
\begin{align*}
  n_G & =n_{H_5}+n_{N(H_5)}+n_{R(H_5)} \\
   & \leqslant n_{H_5}(1+a_{\max})-2\varepsilon_{H_5}+52-2\alpha(H_5) \\
   & \leqslant 6+6\times5-2\times5+52-2\times5 \\
  & =68.
\end{align*}
This finishes the proof of $(ii)$.
\end{proof}

Theorem \ref{pre(4,2)pillar} follows immediately from Lemma \ref{noK23} and \ref{nG68}. As an easy consequence of Theorem \ref{pre(4,2)pillar}, we obtain a bound of the order of $(4,2)$ pillar.

\begin{corollary}\label{(4,2)pillar}
Let $S$ be a Seidel matrix with $\lambda_{\min}(S)=-5$. Assume that $\omega([S])=4$ and $\alpha([S])\leqslant28$. Let $G$ be a graph in the switching class of $S$ such that $\omega(G)=4$. Let $\mathcal{P}$ be a $(4,2)$ pillar. Let $G_{\mathcal{P}}$ be the subgraph of $G$ induced on $\mathcal{P}$. Let $a_{x}$ be the valency of $x\in V(G_{\mathcal{P}})$ in $G_{\mathcal{P}}$. Let $p$ denote the number of vertices in $\mathcal{P}$. If $p\geqslant66$, then the following hold.
\begin{enumerate}[(i)]
  \item $a_x+a_y\leqslant20$ for all $xy\in E(G_{\mathcal{P}})$.
  \item $p\leqslant68$.
\end{enumerate}
\end{corollary}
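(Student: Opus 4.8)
The plan is to deduce the corollary from Theorem~\ref{pre(4,2)pillar} by a single Seidel switching. Write $\mathcal{P}=\mathcal{P}_U$ with $|U|=2$, and without loss of generality $U=\{x_1,x_2\}$. Pass from $G$ to $G':=G_{sw}(\{x_1,x_2\})$, which again lies in $[S]$. The first step is to read off the adjacencies of $G'$ on $\{x_1,\dots,x_4\}\cup\mathcal{P}$: inside the base, the edges $x_1x_2$ and $x_3x_4$ survive while the four edges from $\{x_1,x_2\}$ to $\{x_3,x_4\}$ become non-edges; every $y\in\mathcal{P}$ loses its two edges to $x_1,x_2$ and keeps its non-adjacency to $x_3,x_4$; and edges inside $\mathcal{P}$ are untouched. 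Hence the subgraph of $G'$ induced on $\{x_1,\dots,x_4\}\cup\mathcal{P}$ equals $G_{\mathcal{P}}\,\dot\cup\,2K_2$, and $G_{\mathcal{P}}$ (which is the same graph whether viewed inside $G$ or inside $G'$, since switching fixes the edges within $\mathcal{P}$) is triangle-free, because a triangle inside $\mathcal{P}$ together with $x_1,x_2$ would be a $5$-clique in $G$, contradicting $\omega([S])=4$.

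Next I would check the remaining hypotheses of Theorem~\ref{pre(4,2)pillar} for $G_{\mathcal{P}}$, whose order is $p$. For the independence number, this is the inequality $\alpha(G_{\mathcal{P}})\le\alpha([S])-2$ recorded just before Theorem~\ref{pre(4,2)pillar}: any independent set of $G_{\mathcal{P}}$ remains independent in $G'$ and can be enlarged by $x_1$ and $x_3$, which in $G'$ are non-adjacent to each other and to every vertex of $\mathcal{P}$; since $\alpha([S])\le 28$ this yields $\alpha(G_{\mathcal{P}})\le 26$, which is precisely why the hypothesis $\alpha([S])\le 28$ is imposed. For the smallest Seidel eigenvalue, $G_{\mathcal{P}}\,\dot\cup\,2K_2$ is an induced subgraph of $G'\in[S]$, so by interlacing (Lemma~\ref{SandA}(ii)) we get $\lambda_{\min}(S(G_{\mathcal{P}}\,\dot\cup\,2K_2))\ge\lambda_{\min}(S)=-5$.

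Finally I would apply Theorem~\ref{pre(4,2)pillar} with $n_G=p$: part~(i) gives $p\le 68$, which is statement~(ii) of the corollary, and, under the hypothesis $p\ge 66$, Lemma~\ref{nG68}(i) (which is the stronger ``for all edges'' form that the proof of Theorem~\ref{pre(4,2)pillar} actually establishes) gives $a_x+a_y\le 20$ for every edge $xy$ of $G_{\mathcal{P}}$, which is statement~(i) of the corollary. I do not expect a real obstacle here; the only points requiring care are the adjacency bookkeeping after the switching and making sure the loss in the independence number is exactly $2$, so that the clean hypothesis $\alpha([S])\le 28$ matches the input $\alpha(G_{\mathcal{P}})\le 26$ that Theorem~\ref{pre(4,2)pillar} requires.
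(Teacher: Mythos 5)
Your proposal is correct and follows essentially the same route as the paper: the switching with respect to $\{x_1,x_2\}$, the identification of the induced subgraph as $G_{\mathcal{P}}\,\dot\cup\,2K_2$ with $G_{\mathcal{P}}$ triangle-free and $\alpha(G_{\mathcal{P}})\leqslant\alpha([S])-2\leqslant 26$, and the application of Theorem~\ref{pre(4,2)pillar} are exactly the content of the paragraph preceding that theorem, from which the paper declares the corollary an easy consequence. You are also right that statement~(i) needs the universal ``for all edges'' form of Lemma~\ref{nG68}(i) rather than the existential statement in Theorem~\ref{pre(4,2)pillar}(ii); that is a fair reading of what the paper actually proves and uses.
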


\section{Gallery}\label{sec:gallery}

Let $S$ be a Seidel matrix of order $n$ and $\omega([S])=4$. Let $H$ be a graph in $[S]$ such that there exist two adjacent vertices $x_1$ and $x_2$ satisfy that $x_1$ and $x_2$ have no common neighbours. Define the gallery with respect to $\{x_1,x_2\}$, $\mathcal {Ga}(x_1,x_2)$, as the subgraph of $H$ induced on $U(x_1,x_2):=\{y\in V(H)\mid y$ is not adjacent to $x_1$ nor $x_2\}$.

\begin{lemma}\label{gallery}
Let $u,v\in U(x_1,x_2)$ such that $u\sim v$, where $U(x_1,x_2)$ is defined as above. Then the following hold:
  \begin{enumerate}[(i)]
  \item The set $W_{\{u\}}:=\{y\in U(x_1,x_2)\mid y\sim u, y\not\sim v\} \setminus \{v\}$  is the $(4,1)$ pillar $\mathcal{P}_{\{v\}}$ with respect to the $4$-base $\{u,v,x_1,x_2\}$.
  \item The set $W_{\emptyset}:=\{y\in U(x_1,x_2)\mid y\not\sim u, y\not\sim v\} \setminus \{u,v\}$ is the $(4,2)$ pillar $\mathcal{P}_{\{u,v\}}$ with respect to the $4$-base $\{u,v,x_1,x_2\}$.
  \item $U(x_1, x_2)$ is the disjoint union of $\{u,v\}$,  $W_{\{u\}}$, $W_{\{v\}}$, and $W_\emptyset$.
  \end{enumerate}
\end{lemma}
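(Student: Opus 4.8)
The plan is to deduce everything by switching $H$ so that $\{u,v,x_1,x_2\}$ becomes a genuine $4$-clique, and then merely unwinding the definition of a pillar. Concretely, I would first pass to the graph $H' := H_{sw}(\{u,v\})$, which again lies in $[S]$. Since both endpoints of the edge $uv$ lie in $\{u,v\}$, and both endpoints of the edge $x_1x_2$ lie in the complement, these two edges survive the switching; on the other hand each of the four non-adjacent pairs $ux_1$, $ux_2$, $vx_1$, $vx_2$ has exactly one endpoint in $\{u,v\}$, so switching turns each of them into an edge. Hence $\{u,v,x_1,x_2\}$ induces a $K_4$ in $H'$, and because $\omega([S])=4$ we get $\omega(H')=4$; thus $B:=\{u,v,x_1,x_2\}$ is a legitimate $4$-base of $H'$ in the sense of Section~\ref{sec:pillar}, and the pillars named in the statement are by definition the pillars $\mathcal{P}_{\{v\}}$ and $\mathcal{P}_{\{u,v\}}$ of $H'$ with respect to $B$.

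Next I would record the effect of this switching on an arbitrary vertex $y\in V(H)\setminus\{u,v,x_1,x_2\}$: adjacency of $y$ to $x_1$ and to $x_2$ is unchanged, while adjacency of $y$ to $u$ and to $v$ is toggled. In particular $y\in U(x_1,x_2)$ if and only if $y\not\sim_{H'}x_1$ and $y\not\sim_{H'}x_2$, so the whole discussion takes place among the vertices $y$ that, in $H'$, lie in some pillar with respect to $B$. The four mutually exclusive adjacency patterns of $y$ to $\{u,v\}$ in $H$ correspond under the toggle to the four patterns defining $\mathcal{P}_\emptyset$, $\mathcal{P}_{\{u\}}$, $\mathcal{P}_{\{v\}}$, $\mathcal{P}_{\{u,v\}}$ with respect to $B$: the pattern $y\sim_H u$, $y\not\sim_H v$ becomes $y\not\sim_{H'}u$, $y\sim_{H'}v$, i.e.\ $y\in\mathcal{P}_{\{v\}}$, and unwinding the set-builder notation shows this set of vertices $y$ is exactly $W_{\{u\}}$, which is (i); symmetrically $W_{\{v\}}:=\{y\in U(x_1,x_2)\mid y\sim v,\ y\not\sim u\}\setminus\{u\}$ equals $\mathcal{P}_{\{u\}}$, and the pattern $y\not\sim_H u$, $y\not\sim_H v$ becomes $y\sim_{H'}u$, $y\sim_{H'}v$, i.e.\ $y\in\mathcal{P}_{\{u,v\}}$, which unwinds to $W_\emptyset$, giving (ii). The only care needed is to exclude $u$ and $v$ themselves, which may satisfy the bare membership conditions; this is precisely the purpose of the ``$\setminus\{v\}$'' and ``$\setminus\{u,v\}$'' appearing in the definitions of $W_{\{u\}}$ and $W_\emptyset$.

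For (iii), any $y\in U(x_1,x_2)\setminus\{u,v\}$ realises exactly one of the four adjacency patterns to $\{u,v\}$, and three of them place $y$ in $W_{\{u\}}$, $W_{\{v\}}$, or $W_\emptyset$; these three sets are pairwise disjoint and disjoint from $\{u,v\}$ by construction. It remains to exclude the fourth pattern, $y\sim_H u$ and $y\sim_H v$. In that case $\{u,v,y\}$ is a triangle of $H$ while $x_1,x_2$ are adjacent and neither is adjacent to any of $u,v,y$, so $\{u,v,y,x_1,x_2\}$ induces $K_3\dot\cup K_2$ in $H$; switching $H$ with respect to $\{x_1,x_2\}$ then makes $\{u,v,y,x_1,x_2\}$ a $K_5$, contradicting $\omega([S])=4$ (equivalently, this pattern would say $\mathcal{P}_\emptyset\neq\emptyset$ with respect to $B$, which is impossible). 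Hence $U(x_1,x_2)=\{u,v\}\dot\cup W_{\{u\}}\dot\cup W_{\{v\}}\dot\cup W_\emptyset$.

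I do not expect a genuine obstacle here: the lemma is a bookkeeping statement, and the only things requiring attention are keeping the toggling of adjacencies straight under the switching $H\mapsto H_{sw}(\{u,v\})$ and correctly handling the degenerate roles of $u$ and $v$ themselves in the various set-builder descriptions.
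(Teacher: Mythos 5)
Your proposal is correct and is exactly the argument the paper leaves implicit: the paper's proof of Lemma~\ref{gallery} is just ``this follows straightforward from the definition,'' and your switching of $H$ with respect to $\{u,v\}$ to turn $\{u,v,x_1,x_2\}$ into a $4$-base, together with tracking the toggled adjacencies, is the intended bookkeeping. Your exclusion of the fourth pattern in (iii) via $\mathcal{P}_\emptyset=\emptyset$ (equivalently, a $K_5$ after switching) is also precisely the use of $\omega([S])=4$ the paper alludes to.
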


\begin{proof}
  This follows straightforward from the definition.
  In particular, (iii) follows from the assumption that $\omega([S])=4$.
\end{proof}

Now we come to our main result in this section.

\begin{theorem}\label{105}
  Let $S$ be a Seidel matrix of order $n\geqslant277$ with $\lambda_{\min}(S)=-5$, $\alpha([S])\leqslant28$ and $\omega([S])=4$. We define $\mathcal{Ga}(x_1,x_2)$ and $U(x_1,x_2)$ as above. For $y\in\mathcal{Ga}(x_1,x_2)$, let $b_y$ be the valency of $y$ in $\mathcal{Ga}(x_1,x_2)$. Let $b_{\max}$ be the maximum valency of $\mathcal{Ga}(x_1,x_2)$. Let $\kappa$ be the order of $\mathcal{Ga}(x_1,x_2)$. If $b_{\max}\leqslant20$, then the following hold:
\begin{enumerate}[(i)]
  \item $\kappa\leqslant105$;
  \item If $\kappa=105$, then there exists an edge $uv$ inside $\mathcal{Ga}(x_1,x_2)$ such that $b_u=b_v=20$.
\end{enumerate}
\end{theorem}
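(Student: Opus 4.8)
The plan is to exploit the fact, immediate from Lemma~\ref{gallery}(iii), that $\mathcal{Ga}(x_1,x_2)$ is triangle-free: if an edge $uv$ of the gallery lay in a triangle, then $u$ and $v$ would have a common neighbour inside $U(x_1,x_2)$, which is impossible because after switching $\{u,v,x_1,x_2\}$ into a $4$-clique such a vertex would sit in $\mathcal{P}_\emptyset$, contradicting $\omega([S])=4$. Since $x_1\sim x_2$ and neither is adjacent to any vertex of $U(x_1,x_2)$, the graph $\mathcal{Ga}(x_1,x_2)\,\dot\cup\,K_2$ is an induced subgraph of the ambient graph, so its Seidel matrix has smallest eigenvalue at least $-5$, and $\alpha(\mathcal{Ga}(x_1,x_2))\le\alpha([S])-1\le 27$. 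First I would dispose of the case $\rho(\mathcal{Ga}(x_1,x_2))\le 2$: by Smith's Theorem every component of the gallery is then a path or cycle contained in one of the graphs of Theorem~\ref{spectralradius2}, so $\kappa\le\frac{5}{2}\alpha(\mathcal{Ga}(x_1,x_2))<105$ and (ii) is vacuous. Hence from now on assume $\rho(\mathcal{Ga}(x_1,x_2))>2$, so the gallery contains an edge.

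The second ingredient is a pillar identity. For any edge $uv$ of the gallery, Lemma~\ref{gallery} together with triangle-freeness gives the partition $U(x_1,x_2)=\{u,v\}\,\dot\cup\,W_{\{u\}}\,\dot\cup\,W_{\{v\}}\,\dot\cup\,W_\emptyset$, in which $W_{\{u\}}=N_{\mathcal{Ga}(x_1,x_2)}(u)\setminus\{v\}$ and $W_{\{v\}}=N_{\mathcal{Ga}(x_1,x_2)}(v)\setminus\{u\}$ are independent sets of sizes $b_u-1$ and $b_v-1$, while $W_\emptyset$ is a $(4,2)$-pillar with respect to the $4$-base $\{u,v,x_1,x_2\}$. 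Therefore
\[
\kappa=b_u+b_v+p_\emptyset(uv),\qquad p_\emptyset(uv):=|W_\emptyset|,
\]
and $p_\emptyset(uv)\le 68$ by Corollary~\ref{(4,2)pillar}. I would then split according to whether $\mathcal{Ga}(x_1,x_2)$ contains an induced $K_{2,3}$.

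Suppose first that $\mathcal{Ga}(x_1,x_2)$ is $K_{2,3}$-free, and fix any edge $uv$. If $p_\emptyset(uv)\le 65$ then $\kappa\le b_u+b_v+65\le 20+20+65=105$, with equality forcing $b_u+b_v=40$, hence $b_u=b_v=20$. If $p_\emptyset(uv)\ge 66$, then $W_\emptyset$ is triangle-free with $\alpha(W_\emptyset)\le\alpha([S])-2\le 26$ and $\lambda_{\min}(S(W_\emptyset\,\dot\cup\,2K_2))\ge-5$, so Theorem~\ref{pre(4,2)pillar}(ii) yields an edge $zz'$ inside $W_\emptyset$ with $a_z+a_{z'}\le 20$, where $a_z,a_{z'}$ denote valencies inside $W_\emptyset$. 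Since $W_{\{u\}}\subseteq N_{\mathcal{Ga}(x_1,x_2)}(u)$, $z\not\sim u$, and the gallery has no $K_{2,3}$, the vertex $z$ has at most $2$ neighbours in $W_{\{u\}}$ and at most $2$ in $W_{\{v\}}$ (and likewise for $z'$), so $b_z\le a_z+4$ and $b_{z'}\le a_{z'}+4$. Applying the identity to the gallery edge $zz'$ now gives
\[
\kappa=b_z+b_{z'}+p_\emptyset(zz')\le (a_z+a_{z'})+8+68\le 96<105 ,
\]
which settles (i) and (ii) in this case, since every gallery edge then satisfies $b_u=b_v=20$ when $\kappa=105$.

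The remaining, and hardest, case is that $\mathcal{Ga}(x_1,x_2)$ contains an induced $K_{2,3}$, say $K$; here I would run a forbidden-subgraph count, $\kappa=|V(K)|+|N_{\mathcal{Ga}(x_1,x_2)}(K)|+|R_{\mathcal{Ga}(x_1,x_2)}(K)|$. Because $R_{\mathcal{Ga}(x_1,x_2)}(K)\,\dot\cup\,K\,\dot\cup\,K_2$ is an induced subgraph of the ambient graph, Lemma~\ref{pillarforbidden1}(ii)--(iii) forbids induced $P_4$ and $K_{1,3}$ in $R_{\mathcal{Ga}(x_1,x_2)}(K)$, so its components lie in $\{K_1,K_2,P_3\}$, and Lemma~\ref{pillarforbidden2}(i) then gives $|R_{\mathcal{Ga}(x_1,x_2)}(K)|\le 14$. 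For the middle term one has $|N_{\mathcal{Ga}(x_1,x_2)}(K)|\le\sum_{x\in V(K)}b_x-2\varepsilon_K\le 100-12=88$, which only yields $\kappa\le 107$; the main obstacle is recovering the last two vertices. This should come from triangle-freeness: a vertex outside $K$ adjacent to two vertices of $K$ must be adjacent to two on the same side (else it closes a triangle with an edge of $K$), producing a $K_{2,4}$ through the two degree-$3$ vertices of $K$ or a further $K_{2,3}$ through two of its degree-$2$ vertices. Passing to a maximal complete bipartite subgraph as the core, redoing the count with those overlaps, and using that any vertex $v$ of valency $20$ in the gallery has $|R_{\mathcal{Ga}(x_1,x_2)}(\{v\}\cup N(v))|\le 2$ (because $K_{1,20}\,\dot\cup\,2K_2$ is forbidden by Lemma~\ref{36}(i)) should sharpen the estimate to $\kappa\le 105$; tracing equality back through the count then again forces an edge both of whose endpoints have valency $20$. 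The delicate point throughout is precisely the bookkeeping of multiple adjacencies between $N_{\mathcal{Ga}(x_1,x_2)}(K)$ and $K$, which is what separates the crude bound $107$ from the sharp bound $105$.
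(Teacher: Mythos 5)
Your reduction to an edge $uv$ with $p_{\{u,v\}}\geqslant 66$, your use of Corollary~\ref{(4,2)pillar} to extract an edge $zz'$ of the $(4,2)$-pillar with $a_z+a_{z'}\leqslant 20$, and your final count against an induced $K_{2,3}$ with $|R(K)|\leqslant 14$ are the right ingredients, and your treatment of the $K_{2,3}$-free branch is correct and complete. But the branch you yourself flag as the hardest --- the gallery contains an induced $K_{2,3}$ --- is genuinely open in your write-up: all you actually establish there is $\kappa\leqslant 5+\bigl(\sum_{x\in V(K)}b_x-2\varepsilon_K\bigr)+14\leqslant 107$, and the suggested repairs (passing to a maximal complete bipartite core, or using $|R(\{v\}\cup N(v))|\leqslant 2$ for a valency-$20$ vertex) are not carried out and do not obviously combine into a proof of $105$. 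An arbitrary induced $K_{2,3}$ carries no information beyond its six edges, so the naive inclusion count cannot beat $107$; the last two vertices are exactly what the theorem is about.

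The paper closes this $2$-vertex gap by not taking an arbitrary $K_{2,3}$ but manufacturing one whose two hub vertices have at least \emph{five} common neighbours. Concretely, assuming $\kappa\geqslant 105$: for the edge $zz'$ inside the big $(4,2)$-pillar one has $b_z+b_{z'}=\kappa-p_{\{z,z'\}}\geqslant 105-68=37$ while $a_z+a_{z'}\leqslant 20$, so at least $17$ edges run from $\{z,z'\}$ into $W_{\{u\}}\cup W_{\{v\}}$; by pigeonhole one of the four pairs, say $(u,z)$, has at least $\lceil 17/4\rceil=5$ common neighbours $w_1,\dots,w_5$. Taking $K$ to be the $K_{2,3}$ on $\{u,z,w_1,w_2,w_3\}$, the two leftover common neighbours $w_4,w_5$ are each adjacent to two vertices of $K$ and hence double-counted in $\sum_{x\in V(K)}b_x-2\varepsilon_K$, which subtracts the missing $2$ and gives $\kappa\leqslant\bigl(\sum_{x\in V(K)}b_x-9\bigr)+14\leqslant 5\cdot 20+5=105$, with equality forcing $b_u=b_z=b_{w_1}=b_{w_2}=b_{w_3}=20$ and hence statement (ii) via the edge $uw_1$. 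Note this also makes your case split unnecessary: the argument produces the required $K_{2,3}$ whenever $p_{\{u,v\}}\geqslant 66$, so the $K_{2,3}$-free branch is never needed. What is missing from your proposal is precisely the quantitative upgrade from ``some induced $K_{2,3}$'' to ``a $K_{2,3}$ whose hubs have two further common neighbours''.
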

\begin{proof}
Assume that $\kappa\geqslant105$. Let $uv$ be an edge in $\mathcal{Ga}(x_1,x_2)$. As $b_{\max}\leqslant20$, note that
\begin{equation*}
  p_{\{u,v\}}=\kappa-(b_u+b_v)\geqslant105-2\times20=65.
\end{equation*}
Moreover, if equality holds, then we have $p_{\{u\}}=p_{\{v\}}=20$, $p_{\{u,v\}}=65$ and $\kappa=105$. Now we assume that $p_{\{u,v\}}\geqslant66$. Since $p_{\{u,v\}}>28\geqslant\alpha([S])$, there exist two adjacent vertices in $\mathcal{P}_{\{u,v\}}$, say $x$ and $y$. By Lemma \ref{gallery} and Corollary \ref{(4,2)pillar}, we have $p_{\{u,v\}}\leqslant 68$. It follows that
\begin{equation*}
b_x+b_y=\kappa-p_{\{u,v\}}\geqslant105-68=37.
\end{equation*}
Let $a_x$ and $a_y$ be the valencies of $x$ and $y$ in the subgraph of $\mathcal{Ga}(x_1,x_2)$ induced on $\mathcal{P}_{\{u,v\}}$, respectively. By Corollary \ref{(4,2)pillar}, we see that $a_x+a_y\leqslant20$. This means that $x$ and $y$ have $(b_x+b_y)-(a_x+a_y)\geqslant37-20=17$ neighbours outside $\mathcal{P}_{\{u,v\}}$ in $\mathcal{Ga}(x_1,x_2)$. Without loss of generality, we may assume that $u$ and $x$ have at least $\lceil\frac{17}{4}\rceil=5$ common neighbours in $\mathcal{Ga}(x_1,x_2)$. Let $w_1,\ldots,w_5$ are $5$ common neighbours of $u$ and $x$.

Let $K$ be the subgraph of $\mathcal{Ga}(x_1,x_2)$ induced on $\{u,x,w_1,w_2,w_3\}$. Then there are at most $5+b_u+b_x+b_{w_1}+b_{w_2}+b_{w_3}-2\times6-2=b_u+b_x+b_{w_1}+b_{w_2}+b_{w_3}-9$
vertices in $\mathcal{Ga}(x_1,x_2)$ that are either in $K$ or have a neighbour in $K$. Let $R(K)$ be the subgraph induced on the vertices of $\mathcal{Ga}(x_1,x_2)$ that are neither in $K$ nor have a neighbour in $K$. Now $R(K)$ has neither $P_4$ nor $K_{1,3}$ as an induced subgraph by Lemma \ref{pillarforbidden1}. By Lemma \ref{pillarforbidden2} $(i)$, we see that $R(K)$ has at most $14$ vertices, as $\rho(R(K))<2$. It follows that
\begin{align*}
\kappa & \leqslant b_u+b_x+b_{w_1}+b_{w_2}+b_{w_3}-9+14\\
& \leqslant5b_{\max}+5\\
& \leqslant5\times20+5\\
&=105,
\end{align*}
and if $\kappa=105$ holds, then we have $b_u=b_x=b_{w_1}=b_{w_2}=b_{w_3}=20$. This shows the theorem.
\end{proof}


We will show the following theorem in the remaining of this section. This finishes the proof of Theorem \ref{maintheorem}.

\begin{theorem}
  Let $S$ be a Seidel matrix of order $n$ and $\lambda(S)=-5$. If $\omega([S])=4$ and $\alpha([S])\leqslant28$, then $n\leqslant276$.
\end{theorem}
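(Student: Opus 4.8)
The plan is to combine all the structural bounds obtained so far into a single counting argument over a gallery. By Theorem~\ref{LStrianglefree}, Proposition~\ref{LS2} and Theorem~\ref{LS16}, we may assume $[S]$ contains a graph $G$ with $\omega(G)=4$, $\rho(G)>2$ and $d_{\max}\geqslant 17$; in particular $n$ is large enough that edges with no common neighbour are forced to exist. Concretely, I would first locate an edge $\{x_1,x_2\}$ in $G$ with no common neighbour: since $\omega([S])=4$, inside the neighbourhood of a high-valency vertex one finds a triangle, and then (as in the arguments of Claims~\ref{3graphs} and \ref{28}) one can peel off a vertex of that triangle to produce two adjacent vertices whose common-neighbourhood is empty, or else directly bound $n$. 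Having fixed such an edge, decompose $V(G)$ as $\{x_1,x_2\}\cup N(x_1)\cup N(x_2)\cup U(x_1,x_2)$, where $U(x_1,x_2)=V(\mathcal{Ga}(x_1,x_2))$, so that $n = 2 + |N(x_1)\cup N(x_2)| + \kappa$ with $\kappa$ the order of the gallery.

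Next I would bound each piece. The two neighbourhoods $N(x_i)$ are each (after switching at $x_i$, which does not change the switching class) graphs whose Seidel matrix together with the pendant edges has smallest eigenvalue $\geqslant -5$; using the forbidden subgraphs $K_{1,r}(s,t)\in\mathcal F_{-5}$ from Lemma~\ref{36} and $K_{1,14}\dot\cup 2K_2$, $H\dot\cup P_4\dot\cup K_2$, etc.\ from Lemma~\ref{pillarforbidden1}, together with the independence bound $\alpha([S])\leqslant 28$, I would show $|N(x_i)|$ is bounded by an absolute constant — the relevant computation is essentially the one already carried out in Lemma~\ref{nG68}, giving roughly $|N(x_i)|\leqslant 68$ or so, hence $|N(x_1)\cup N(x_2)|\leqslant 136$. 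For the gallery itself, Theorem~\ref{105} gives $\kappa\leqslant 105$ \emph{provided} $b_{\max}(\mathcal{Ga})\leqslant 20$; so the remaining task is to rule out $b_{\max}(\mathcal{Ga})\geqslant 21$. Here I would argue that a vertex $y$ of the gallery with $b_y\geqslant 21$ neighbours inside $\mathcal{Ga}(x_1,x_2)$ produces, via the pillar decomposition of Lemma~\ref{gallery} with the $4$-base $\{y,y',x_1,x_2\}$ for a neighbour $y'$ of $y$, a $(4,1)$-pillar or $(4,2)$-pillar that is too large — contradicting Theorem~\ref{(4,1)pillar} (at most $19$ vertices in the "other" $(4,1)$-pillars once there is an edge between pillars) and Corollary~\ref{(4,2)pillar} (at most $68$), after also using that $\alpha([S])\leqslant 28$ forces such cross-pillar edges to exist whenever a pillar exceeds $28$ vertices.

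Finally, assembling the three bounds: $n = 2 + |N(x_1)\cup N(x_2)| + \kappa \leqslant 2 + 136 + 105$, which overshoots $276$, so the crude version of the argument is not quite sharp and the bookkeeping must be done carefully. To close the gap I would sharpen two places. First, $N(x_1)$ and $N(x_2)$ overlap only in vertices adjacent to both $x_1$ and $x_2$ — but $\{x_1,x_2\}$ was chosen with no common neighbour, so $N(x_1)\cap N(x_2)=\emptyset$ and the union is a \emph{disjoint} union, yet one gains by noting that $N(x_1)$ and $N(x_2)$ together with $x_1,x_2$ and the edge between them form a constrained configuration (each $N(x_i)$ is triangle-free after switching and interacts with a $2K_2$), pushing the bound on $|N(x_1)|+|N(x_2)|$ well below $136$. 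Second, and more importantly, the extremal analysis: Theorem~\ref{105}(ii) says $\kappa=105$ forces an edge $uv$ in the gallery with $b_u=b_v=20$, and $p_{\{u,v\}}=65$; one then re-examines this near-extremal case and the near-extremal cases of the neighbourhood bounds simultaneously, showing they cannot co-occur, so that $n\leqslant 276$ strictly. \textbf{The main obstacle} I anticipate is precisely this last step — the individual bounds on $|N(x_i)|$ and on $\kappa$ are each a little too generous, and the proof must track the extremal configurations through all of Lemma~\ref{nG68}, Theorem~\ref{(4,1)pillar}, Corollary~\ref{(4,2)pillar} and Theorem~\ref{105} at once, showing the slack in one forces slack in another, to land exactly at $276$.
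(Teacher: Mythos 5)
There is a genuine gap in the assembly of your bounds, and it is not the one you flagged. The step ``$|N(x_i)|\leqslant 68$ by the computation of Lemma~\ref{nG68}'' is unjustified: Lemma~\ref{nG68} applies to a \emph{triangle-free} graph $G$ with $\lambda_{\min}(S(G\dot\cup 2K_2))\geqslant-5$, which is exactly the situation of a $(4,2)$-pillar after switching, not of a vertex neighbourhood. If you choose the edge $x_1x_2$ inside a $4$-base $\{x_1,\dots,x_4\}$ and switch so that $x_1,x_2$ have no common neighbour, then $N(x_1)\cup N(x_2)$ is precisely $\mathcal{P}_{\{x_1\}}\cup\mathcal{P}_{\{x_2\}}\cup\mathcal{P}_{\{x_1,x_3\}}\cup\mathcal{P}_{\{x_1,x_4\}}$, i.e.\ two $(4,1)$-pillars and two $(4,2)$-pillars, so the available bounds give only $19+19+68+68=174$, hence $n\leqslant 2+174+105=281$. (Your ``$2+136+105$ overshoots $276$'' is also off: that sum is $243$; the problem is not that the total overshoots but that the $136$ is not a theorem.) Your fallback --- simultaneously tracking the extremal configurations of Lemma~\ref{nG68}, Theorem~\ref{(4,1)pillar}, Corollary~\ref{(4,2)pillar} and Theorem~\ref{105} to shave off the last few vertices --- is a hope, not an argument, and it is not how the gap of $5$ gets closed.

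The missing idea is a double-counting identity over the $4$-base rather than a single-gallery decomposition. With pillars normalized so that the nonempty ones are $\mathcal{P}_{\{x_i\}}$ $(1\leqslant i\leqslant 4)$ and $\mathcal{P}_{\{x_1,x_j\}}$ $(j=2,3,4)$, one has $q_{x_ix_j}=2+p_{\{x_k\}}+p_{\{x_l\}}+p_{\{x_i,x_j\}}$ for $\{i,j,k,l\}=\{1,2,3,4\}$, whence
\begin{equation*}
n=\tfrac12\bigl(q_{x_1x_3}+q_{x_2x_4}+q_{x_1x_4}+q_{x_2x_3}\bigr)+p_{\{x_1,x_2\}},
\end{equation*}
in which every $(4,1)$-pillar is counted exactly once and only \emph{one} $(4,2)$-pillar appears outside the galleries. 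Combined with $q\leqslant 104$ (the case $q=105$ being handled separately via Theorem~\ref{105}(ii), giving $n\leqslant 275$) and $p_{4,2}\leqslant 68$, this yields $n\leqslant \frac{4\times104}{2}+68=276$ exactly. A separate case is still needed when some $(4,1)$-pillar has at least $20$ vertices: there Theorem~\ref{(4,1)pillar} forbids edges among the other three $(4,1)$-pillars, so their union is bounded by $\alpha([S])\leqslant 28$ and $n\leqslant 4+2\cdot28+3\cdot68=264$; this case is also what legitimizes the hypothesis $b_{\max}\leqslant 20$ of Theorem~\ref{105} in the main case, since a gallery vertex of valency $b$ yields a $(4,1)$-pillar of size $b-1$. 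Your proposal contains all the right lemmas but without the averaging identity (and the $p_{4,1}\geqslant 20$ dichotomy) it does not reach $276$.
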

\begin{proof}
  Let $\mathcal{Ga}(x,y)$ be the gallery with respect to $xy$ with order $q_{xy}$. For a $4$-base $\{x_1,\ldots,x_4\}$ denote for $U\subseteq\{x_1,\ldots,x_4\}$ the $U$-pillar with respect to $\{x_1,\ldots,x_4\}$ by $\mathcal{P}_U$ with order $p_U$.

  Let $\{x_1,\ldots,x_4\}$ be a $4$-base. If $p_{\{x_1\}}\geqslant20$, then $p_{\{x_2\}}+p_{\{x_3\}}+p_{\{x_4\}}\leqslant\alpha([S])$, by Theorem \ref{(4,1)pillar}. It follows that
\begin{align*}
  n&=4+p_{\{x_1\}}+p_{\{x_2\}}+p_{\{x_3\}}+p_{\{x_4\}}+p_{\{x_1,x_2\}}+p_{\{x_1,x_3\}}+p_{\{x_1,x_4\}}\\
  &\leqslant 4+2\alpha([S])+3p_{4,2}\\
  &\leqslant 4+2\times28+3\times68 \\
  &=264,
\end{align*}
as $\alpha([S])\leqslant28$ and $p_{4,2}\leqslant68$.

Now we may assume that $p_{\{x_i\}}\leqslant19$ for $i=1,\ldots,4$. As $q_{x_1x_2}=2+p_{\{x_3\}}+p_{\{x_4\}}+p_{\{x_1,x_2\}}$ and $q_{x_3x_4}=2+p_{\{x_1\}}+p_{\{x_2\}}+p_{\{x_1,x_2\}}$, we observe that $q_{x_1x_2}+q_{x_3x_4}=4+p_{\{x_1\}}+p_{\{x_2\}}+p_{\{x_3\}}+p_{\{x_4\}}+2p_{\{x_1,x_2\}}$. By Theorem \ref{105}, we have $q_{x_1x_2}\leqslant105$ and, if $q_{x_1x_2}=105$, then we may assume that $p_{\{x_3\}}=p_{\{x_4\}}=19$ and hence $p_{\{x_1,x_2\}}=65$. It follows that
\begin{align*}
  n&=4+p_{\{x_1\}}+p_{\{x_2\}}+p_{\{x_3\}}+p_{\{x_4\}}+p_{\{x_1,x_2\}}+p_{\{x_1,x_3\}}+p_{\{x_1,x_4\}}\\
  &=\frac{1}{2}(q_{x_1x_3}+q_{x_2x_4}+q_{x_1x_4}+q_{x_2x_3})+p_{\{x_1,x_2\}}\\
  &\leqslant\frac{4\times105}{2}+65\\
  &=275.
\end{align*}
So we may assume that $q_{x_ix_j}\leqslant104$ for all $1\leqslant i<j\leqslant4$. We find
\begin{align*}
  n&=4+p_{\{x_1\}}+p_{\{x_2\}}+p_{\{x_3\}}+p_{\{x_4\}}+p_{\{x_1,x_2\}}+p_{\{x_1,x_3\}}+p_{\{x_1,x_4\}}\\
  &=\frac{1}{2}(q_{x_1x_3}+q_{x_2x_4}+q_{x_1x_4}+q_{x_2x_3})+p_{\{x_1,x_2\}}\\
  &\leqslant\frac{4\times104}{2}+68\\
  &=276.
\end{align*}

  This finishes the proof.
\end{proof}

\section*{Acknowledgments}
\indent

M.-Y. Cao is partially supported by the National Natural Science Foundation of China (No.~$11571044$ and No.~$61373021$) and the Fundamental Research Funds
for the Central Universities of China.

J. H. Koolen is partially supported by the National Natural Science Foundation of China (No.~$11471009$ and No.~$11671376$) and Anhui Initiative of Quantum
Information Technologies (No.~AHY $150000$).

Y.-C. R. Lin is partially supported by the Ministry of Science and Technology of Taiwan (No.~108-2115-M-003-002-).

W.-H. Yu is partially supported by the Ministry of Science and Technology of Taiwan (No.~107-2115-M-008-010-MY2)
\bibliographystyle{plain}
\bibliography{equiangular}

\end{document}